\documentclass[12pt]{article}

\setlength{\bigskipamount}{5ex plus1.5ex minus 2ex}
\setlength{\textheight}{22cm}
\setlength{\textwidth}{16cm}
\setlength{\hoffset}{-1.3cm}
\setlength{\voffset}{-1.8cm}

\usepackage{amsmath, amssymb}
\usepackage{amsthm}
\usepackage{enumitem}

\usepackage{xcolor}
\definecolor{darkred}{RGB}{139,0,0}
\definecolor{darkgreen}{RGB}{0,120,0}
\definecolor{darkmagenta}{RGB}{139,0,139}
\definecolor{darkpurple}{RGB}{110,0,180}
\definecolor{darkblue}{RGB}{40,0,200}
\definecolor{darkorange}{RGB}{255,140,0}

\newcommand{\fp}[1]{\textcolor{darkmagenta}{#1}}

\newcommand{\gl}[1]{\textcolor{darkgreen}{GL: #1}}

\newcommand{\bsx}{{\boldsymbol{x}}}
\newcommand{\bsh}{\boldsymbol{h}}

\newcommand{\bszero}{\boldsymbol{0}}
\newcommand{\bsgamma}{\boldsymbol{\gamma}}

\newcommand{\bsk}{\boldsymbol{k}}
\newcommand{\bsl}{\boldsymbol{l}}
\newcommand{\bsone}{\boldsymbol{1}}
\newcommand{\bsy}{\boldsymbol{y}}

\newcommand{\rd}{\,{\rm d}}

\newcommand{\RR}{\mathbb{R}}
\newcommand{\NN}{\mathbb{N}}
\newcommand{\ZZ}{\mathbb{Z}}
\newcommand{\CC}{\mathbb{C}}

\newcommand{\cA}{\mathcal{A}}

\newcommand{\cO}{\mathcal{O}}
\newcommand{\cH}{\mathcal{H}}
\newcommand{\icomp}{\mathtt{i}}
\newcommand{\id}{\operatorname{id}}

\theoremstyle{plain}
\newtheorem{theorem}{Theorem}
\newtheorem{proposition}[theorem]{Proposition}
\newtheorem{lemma}[theorem]{Lemma}
\newtheorem{corollary}[theorem]{Corollary}
\theoremstyle{definition}
\newtheorem{definition}{Definition}
\newtheorem{remark}[theorem]{Remark}
\newtheorem{example}[theorem]{Example}

\newcommand{\APP}{{\rm APP}}
\newcommand{\INT}{{\rm INT}}
\newcommand{\uu}{{\mathfrak u}}

\newcommand{\bseta}{\boldsymbol{\eta}}
\newcommand{\bstau}{\boldsymbol{\tau}}
\newcommand{\tra}{{\rm trace}}
\DeclareMathOperator*{\esssup}{ess\,sup}

\allowdisplaybreaks

\begin{document}

\title{Tractability of $L_2$-approximation and integration in weighted Hermite spaces of finite smoothness}

\author{Gunther Leobacher, Friedrich Pillichshammer and Adrian Ebert\thanks{The authors are supported by the Austrian Science Fund (FWF), Projects F5508-N26 (Leobacher), F5509-N26 (Pillichshammer), and F5506-N26 (Ebert), which are parts of the Special Research Program ``Quasi-Monte Carlo Methods: Theory and Applications''.}}

\date{\today}

\maketitle

\begin{abstract}
In this paper we consider integration and $L_2$-approximation for functions over $\RR^s$ from weighted Hermite spaces. The first part of the paper is devoted to a comparison of several weighted Hermite spaces that appear in literature, which is interesting on its own. Then we study tractability of the integration and $L_2$-approximation problem for the introduced Hermite spaces, which describes the growth rate of the information complexity when the error threshold $\varepsilon$ tends to 0 and the problem dimension $s$ grows to infinity. Our main results are characterizations of tractability in terms of the involved weights, which  model the importance of the successive coordinate directions for functions from the weighted Hermite spaces.
\end{abstract}

\section{Introduction}\label{intro}

Weighted integration and approximation of functions over the whole $s$-dimensional Euclidean  space $\RR^s$ appear in many practical problems, often with respect to the Gaussian weight $\varphi$. From a theoretical point of view Gaussian problems can be studied in a very elegant way in the context of Hermite spaces of functions, which are the major object of interest of this paper. We present several examples of weighted Hermite spaces that appear in literature and discuss relations, similarities but also differences between these spaces. In order to be able to go into more details we briefly introduce the general function space setting.

We consider weighted Hermite spaces of functions with finite smoothness, using a similar notation as in \cite{DILP18}. 
In particular, for $k \in \NN_0$, we denote the $k$-th Hermite polynomial by
\begin{equation*}
H_k(x) = \frac{(-1)^k}{\sqrt{k!}} \exp(x^2/2) \frac{\rd^k}{\rd x^k} \exp(-x^2/2).
\end{equation*}
For example, 
\begin{align*}
H_0(x)=1,\ H_1(x)=x,\ H_2(x)=\tfrac{1}{\sqrt{2}}(x^2-1),\ H_3(x)=\tfrac{1}{\sqrt{6}}(x^3-3x), \ldots . 
\end{align*}
Here we follow the definition given in \cite{B98}, but we remark that there are slightly different ways to introduce Hermite polynomials (see, e.g., \cite{szeg}).  We recall the definition of the standard normal density as $\varphi(x)=\frac{1}{\sqrt{2 \pi}} \exp(-x^2/2)$ for $x \in \RR$. Furthermore, for $s \in \NN$, $\bsk = (k_1,\ldots,k_s) \in \NN_0^s$ and 
$\bsx = (x_1,\ldots,x_s) \in \RR^s$ we define the $\bsk$-th Hermite polynomial by
\begin{equation*}
	H_{\bsk}(\bsx)
	:=
	\prod_{j=1}^{s} H_{k_j}(x_j)
\end{equation*}
and additionally set $\varphi_s(\bsx) := \prod_{j=1}^s \varphi(x_j)$, i.e., $\varphi_s$ is the standard normal density on $\RR^s$. It is well known, see \cite{B98}, that the sequence of
Hermite polynomials $(H_{\bsk})_{\bsk \in \NN_0^s}$ forms an orthonormal basis of the function space $L_2(\RR^s,\varphi_s)$, i.e., for all $f\in L_2(\RR^s,\varphi_s)$ we have the \textit{Hermite expansion}
\[
f\sim 
\sum_{\bsk\in \NN_0^s}
\widehat f(\bsk)H_{\bsk}\,,
\]
where $\sim$ denotes convergence in $L_2(\RR^s,\varphi_s)$ and where 
\begin{align*}
\widehat{f}(\bsk)=\int_{\RR^s} f(\bsx) H_{\bsk}(\bsx)
\varphi_s(\bsx)\rd \bsx
\end{align*}
is the $\bsk$-th \textit{Hermite coefficient} of $f$.

Similar to what has been done in~\cite{IL}, we are now going to
define function spaces based on Hermite expansions. These spaces
are Hilbert spaces with a \textit{reproducing kernel}. For details
on reproducing kernel Hilbert spaces, we refer to the classical treatment~\cite{aronszajn50}.

For the time being, let $R: \NN_0^s \rightarrow \RR^+$ be a summable function, i.e.,
$\sum_{\bsk\in\NN_0^s} R(\bsk) < \infty$ (this condition will be slightly relaxed later on in concrete examples). Define a so-called {\it Hermite kernel} as
\begin{align}\label{def:Hkernel}
K_{R}(\bsx,\bsy)=\sum_{\bsk \in \NN_0^s} R(\bsk) H_{\bsk}(\bsx)
H_{\bsk}(\bsy)\ \ \ \ \mbox{ for }\ \ \bsx,\bsy \in \RR^s
\end{align}
and an inner product
\begin{align}\label{def:iprod}
\langle f,g\rangle_{R} =\sum_{\bsk \in \NN_0^s}
\frac{1}{R(\bsk)}\, \widehat{f}(\bsk) \widehat{g}(\bsk)\,.
\end{align}
The weight coefficients $R(\bsk)$ are sometimes also referred to as {\it Fourier weights} (see \cite[p.~3]{GHHR}). Note that $K_R(\bsx,\bsy)$ is well defined for all $\bsx,\bsy \in \RR^s$, since
\begin{align*}
|K_R(\bsx,\bsy)| \le \sum_{\bsk \in \NN_0^s}  R(\bsk) |H_{\bsk}(\bsx)| \,
|H_{\bsk}(\bsy)| \le \frac{1}{\sqrt{\varphi_s(\bsx)
\varphi_s(\bsy)}}\sum_{\bsk \in \NN_0^s}  R(\bsk)< \infty,
\end{align*}
where we have used Cramer's bound for Hermite polynomials, see, e.g., \cite[p.~324]{sansone}, which states that
\begin{align*}
\vert H_{k}(x)\vert\leq
\frac{1}{\sqrt{\varphi(x)}}\quad\textnormal{ for all } k\in\NN_0.
\end{align*}

Let $\mathcal{H}(K_R)$ be the reproducing kernel Hilbert space
corresponding to $K_R$. Such spaces are typically known as {\it Hermite spaces} (see \cite[Definition~3.4]{GHHR}).
The norm in $\mathcal{H}(K_R)$ is given by $\| f
\|_R^2 =\langle f , f \rangle_R$. From this we see
that the functions in $\mathcal{H}(K_R)$ are characterized by the
decay rate of their Hermite coefficients, which is regulated by
the function $R$. Roughly speaking, the faster $R$ decreases as
$\bsk$ moves away from the origin, the faster the Hermite coefficients of the elements
of $\mathcal{H}(K_R)$ decrease. 

It is worth mentioning the similarity of Hermite spaces to Korobov spaces, the elements of which are 
$\CC$-valued continuous periodic functions on the unit interval with a prescribed convergence speed of the Fourier coefficients (see, for example, \cite{DKP22} for detailed information).
The norm and kernel on a Korobov space are obtained from their analogs by replacing Hermite coefficients 
by Fourier coefficients, Hermite polynomials by the functions $x\mapsto {\rm e}^{2\pi \icomp k x}$, $k\in \ZZ$, and
summation over the non-negative integers by summation over all integers.
However, usually the term ``Korobov spaces'' is interpreted in a more narrow sense, where 
the Fourier weights are of the form 
\[
R_{\text{Kor},\alpha,\gamma}(k)
=\begin{cases}
1 & \text{ if } k=0,\\
\gamma |k|^{-\alpha} & \text{ if } k\ne 0,
\end{cases}
\]
for some non-negative weight $\gamma$ and a smoothness parameter $\alpha> 1$.

We are interested in integration and $L_2$-approximation of functions from Hermite spaces. In \cite{IL}, the case of polynomially decreasing $R$ as well as exponentially decreasing $R$ was considered. In \cite{IKLP14,IKPW16a,IKPW16b} further results were obtained for numerical integration and/or $L_2$-approximation for exponentially decreasing $R$. In this case exponential convergence rates can be achieved as well as several notions of tractability which exactly describe a favorable dependence of the errors on the dimension.

Numerical integration for the case of polynomially decaying Fourier weights $R$ is  considered further in \cite{DILP18}. The main focus there is in achieving optimal error convergence rates for the worst-case error leaving aside the exact analysis of the dependence of the errors on the dimension $s$.

In this paper, we continue the work on polynomially decreasing $R$ for $L_2$-approximation and integration in the worst-case setting, where the focus will be on very high-dimensional problems. The quantity of interest is the information complexity which is the number of information evaluations required in order to push the worst-case error below a given error threshold $\varepsilon$, where $\varepsilon \in (0,1)$. The important question that arises for applications is how this information complexity depends on $\varepsilon$ and on the dimension $s$. This question is the subject of tractability theory (see the trilogy \cite{NW08,NW10,NW12} by Novak and Wo\'{z}niakowski for general information). Tractability is a concept to characterize the growth rate of the information complexity when $\varepsilon$ tends to 0 and $s$ grows to infinity. We study tractability for $L_2$-approximation and integration in weighted Hermite spaces and give conditions for various notions of tractability  in terms of the involved weights $\bsgamma=(\gamma_j)_{j \ge 1}$ that model the ``importance'' of the successive coordinate directions.  Despite the apparent similarity between Hermite- and Korobov spaces, there is much more known about tractability of approximation in the worst-case setting for the latter. See \cite{EP21} for matching necessary and sufficient conditions for both standard and linear information. Thus our aim here is to close some of the gaps in knowledge about Hermite spaces.

The paper is organized as follows. First, in Section~\ref{Hspaces}, we discuss and compare several possibilities of describing finite smoothness via various choices of Fourier weights $R$ that appear in literature. This section is  interesting on its own, since often it is not clear which is the right choice of a Hermite space for a given problem. However, we will see that the proposed spaces are equivalent as normed function spaces. For the main example, the so-called {\em Gaussian ANOVA space}, we present an integral representation of the reproducing kernel in Theorem \ref{pr:intrep}.

In Section~\ref{sec:int_app_general} we present the general $L_2$-approximation and integration problem for Hermite spaces and discuss some general facts and relations.

In Section~\ref{sec:app} we will study tractability properties of $L_2$-approximation for functions from a Hermite space for permissible information class from $\Lambda^{{\rm all}}$, consisting of arbitrary  linear functionals, and from $\Lambda^{{\rm std}}$, consisting exclusively of functions evaluations. The main results are Theorem~\ref{thm:main} and Corollary~\ref{cor:app:R:std} (for $\Lambda^{{\rm all}}$) and Theorem~\ref{thm:tract-standard}  (for $\Lambda^{{\rm std}}$). While for $\Lambda^{{\rm all}}$ we get a very clear picture of the whole situation,  that is, we have both necessary and sufficient conditions for a range of notions of tractability,  for $\Lambda^{{\rm std}}$ necessary conditions
remain open problems.    

Tractability for the integration problem is discussed in Section~\ref{sec:int}. Here the main result is Theorem~\ref{thm:main:int}, giving sufficient conditions for several notions of tractability. 

\section{Weighted Hermite spaces of finite smoothness}\label{Hspaces}

Like for  the case of Sobolev spaces of smooth functions over $[0,1]^s$ (see \cite[Appendix~A]{NW08}) there are various possible ways for introducing Hermite spaces of functions with finite smoothness over $\RR^s$. We consider the weighted setting and discuss possible choices for the Fourier weights $R$. Throughout let $\alpha \ge 1$ be a parameter that will describe the smoothness via the decay rate of the Hermite coefficients of a function to zero. If $\alpha \in \NN$ in many cases this can be related to the smoothness of functions with respect to the existence and integrability of partial derivatives of functions. 

\subsection{A Gaussian ANOVA space} \label{sec:ANOVA}

Our first example will be our main object of interest. Later on we will study approximation and integration of functions from this space.

Let $\alpha \ge 1$ and let $\bsgamma=(\gamma_j)_{j \ge 1}$ be a sequence of so-called product weights. We assume throughout that the weights are in $(0,1]$ and that they are in descending order, i.e.,  $1 \ge \gamma_1 \ge \gamma_2 \ge \gamma_3 \ge \ldots >0$. Then the function space of interest is the reproducing kernel Hilbert space 
$\cH_{r_{s,\alpha,\bsgamma}}$ with kernel  \eqref{def:Hkernel} and corresponding inner product \eqref{def:iprod} determined by  $R(\bsk)=r_{s,\alpha,\bsgamma}(\bsk) := \prod_{j=1}^s r_{\alpha,\gamma_j}(k_j)$ with \begin{equation*}
	r_{\alpha,\gamma}(k)
	:=
	\left\{\begin{array}{ll}
		1 & \text{for } k=0, \\[0.5em]
		\gamma \frac{1}{k!}  & \text{for } 1 \le k < \alpha,\\[0.5em]
		\gamma \frac{(k-\alpha)!}{k!} & \text{for } k \ge \alpha.
	\end{array}\right.
\end{equation*}
for a generic weight $\gamma \in (0,1]$. Note that we always have $r_{\alpha,\gamma}(k) \in (0,1]$.

The space $\cH_{r_{s,\alpha,\bsgamma}}:=\cH(K_{r_{s,\alpha,\bsgamma}})$ is a {\it weighted Hermite space} with smoothness parameter $\alpha$ (see Equation~\eqref{eq:norm} below) and weights $\bsgamma$. The weights are introduced in order to model the ``importance'' of the different coordinates for the functions from the space, where weight $\gamma_j$ is assigned to coordinate direction $j \in \NN$, according to an idea of Sloan and Wo\'{z}niakowski (see \cite{slowo}). If all weights equal 1, i.e., if $\gamma_j=1$ for all $j \in \NN$, then we speak about the unweighted Hermite space.

The following lemma gives easy bounds on the decay of the function $r_{\alpha,\gamma}$, showing that $r_{\alpha,\gamma}$ has the same decay rate as the corresponding Fourier weights for the classical Korobov space of smoothness $\alpha$.

\begin{lemma}\label{le:bdrk}
For all $k \in \NN$ we have
\begin{equation*} 
\frac{\gamma}{k^\alpha} \le r_{\alpha,\gamma}(k) \le \gamma \left(\frac{\alpha}{k}\right)^\alpha.
\end{equation*}   
\end{lemma}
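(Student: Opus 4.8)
The plan is to factor out $\gamma$ and prove the equivalent two-sided estimate $k^{-\alpha}\le r_{\alpha,\gamma}(k)/\gamma \le (\alpha/k)^\alpha$, splitting according to whether $1\le k<\alpha$ or $k\ge\alpha$. In the first regime one has $r_{\alpha,\gamma}(k)/\gamma=1/k!$, and everything reduces to the elementary chain $1\le k!\le k^k\le k^\alpha\le\alpha^\alpha$, which is valid because $k\le\alpha$: rearranging gives both $k!\le k^\alpha$ (hence $1/k!\ge k^{-\alpha}$) and $k^\alpha/k!\le\alpha^\alpha$ (hence $1/k!\le(\alpha/k)^\alpha$). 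This case is immediate and carries no difficulty.

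The substance is the regime $k\ge\alpha$, where $r_{\alpha,\gamma}(k)/\gamma=(k-\alpha)!/k!=\Gamma(k-\alpha+1)/\Gamma(k+1)$. The whole lemma then follows from the sandwich
\[
(k-\alpha+1)^\alpha\ \le\ \frac{\Gamma(k+1)}{\Gamma(k-\alpha+1)}\ \le\ k^\alpha ,
\]
which I would isolate as a small auxiliary fact: for $a>0$ and $\beta\ge 1$,
\[
a^\beta\ \le\ \frac{\Gamma(a+\beta)}{\Gamma(a)}\ \le\ (a+\beta-1)^\beta ,
\]
applied with $a=k-\alpha+1$ and $\beta=\alpha$, so that $a+\beta-1=k$. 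For integer $\alpha$ this auxiliary fact is transparent: $\Gamma(k+1)/\Gamma(k-\alpha+1)=k(k-1)\cdots(k-\alpha+1)$ is a product of $\alpha$ factors, each lying in $[k-\alpha+1,k]$, so the product lies between $(k-\alpha+1)^\alpha$ and $k^\alpha$.

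For general real $\alpha$ I would prove the auxiliary fact from the convexity of $F:=\log\Gamma$ on $(0,\infty)$ together with the identity $F(t+1)-F(t)=\log t$. The secant of $F$ through the abscissae $a$ and $a+1$ has slope $\log a$; since $a+\beta$ lies to the right of that interval when $\beta\ge 1$, convexity forces $F(a+\beta)\ge F(a)+\beta\log a$, which is the lower bound. Symmetrically, the secant through $a+\beta-1$ and $a+\beta$ has slope $\log(a+\beta-1)$, and since $a$ lies to the left of that interval, convexity gives $F(a)\ge F(a+\beta)-\beta\log(a+\beta-1)$, which is the upper bound. This ``convex function lies above its secant extensions'' argument is where the only real work sits, and it neatly unifies the two sides; it is also the main obstacle, since the non-integer case is precisely what the naive product bound cannot reach.

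Finally I would assemble the pieces. The upper bound $\Gamma(k+1)/\Gamma(k-\alpha+1)\le k^\alpha$ gives at once $r_{\alpha,\gamma}(k)/\gamma\ge k^{-\alpha}$. For the remaining inequality I combine the lower bound $\Gamma(k+1)/\Gamma(k-\alpha+1)\ge(k-\alpha+1)^\alpha$ with the elementary estimate $k-\alpha+1\ge k/\alpha$, which holds because
\[
k-\alpha+1-\frac{k}{\alpha}=(\alpha-1)\,\frac{k-\alpha}{\alpha}\ge 0
\]
for $\alpha\ge 1$ and $k\ge\alpha$. This yields $r_{\alpha,\gamma}(k)/\gamma\le(\alpha/k)^\alpha$ and completes the proof; for integer $\alpha$ the product argument in place of the Gamma-ratio sandwich makes the lemma genuinely elementary.
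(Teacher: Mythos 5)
Your proof is correct, and for integer $\alpha$ it is essentially the paper's proof: the same case split at $k<\alpha$ versus $k\ge\alpha$, the same key sandwich $(k-\alpha+1)^\alpha\le k!/(k-\alpha)!\le k^\alpha$ obtained from the product of $\alpha$ factors in $[k-\alpha+1,k]$, and the same elementary estimate $k-\alpha+1\ge k/\alpha$ (which the paper phrases as $1-\tfrac{\alpha-1}{k}\ge\tfrac1\alpha$). The one genuine difference is your log-convexity argument for the Gamma-ratio sandwich, which extends the lemma to non-integer $\alpha$; the paper's telescoping expansion $k!/(k-\alpha)!=k(k-1)\cdots(k-\alpha+1)$ is only meaningful for $\alpha\in\NN$, even though the smoothness parameter is nominally allowed to be any real $\alpha\ge1$. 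That said, the definition of $r_{\alpha,\gamma}$ via $(k-\alpha)!$ itself presupposes integer $\alpha$ unless one reads it as $\Gamma(k-\alpha+1)$, so your extension buys generality only under that reinterpretation; within the paper's actual setting the two proofs coincide, and your secant argument is an elegant but optional add-on. Both of your minor deviations in the easy case $1\le k<\alpha$ (deducing the upper bound from $k^\alpha/k!\le\alpha^\alpha$ rather than from $1/k!\le1$ and $(\alpha/k)^\alpha\ge1$ as the paper does) are equally valid.
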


\begin{proof}
If $1 \le k < \alpha$ we have $$\frac{1}{r_{\alpha,\gamma}(k)} = \frac{1}{\gamma} k! \le  \frac{1}{\gamma} k^k \le \frac{1}{\gamma} k^{\alpha}.$$ If $k \ge \alpha$ we have $$\frac{1}{r_{\alpha,\gamma}(k)} = \frac{1}{\gamma} \frac{k!}{(k-\alpha)!}=\frac{1}{\gamma} k (k-1) \cdots (k-\alpha+1) \le   \frac{1}{\gamma} k^{\alpha}.$$ Hence we find that
\begin{equation*} 
	\frac{\gamma}{k^\alpha} \le 	r_{\alpha,\gamma}(k)	. 
\end{equation*}
In order to show the upper bound we consider the case that $k> \alpha$ first. Then  
\begin{equation*}
	r_{\alpha,\gamma}(k) 
	= 
	\frac{\gamma}{k(k-1) \cdots (k-\alpha +1)} \le \frac{\gamma}{(k-\alpha+1)^\alpha}=\frac{\gamma}{k^\alpha (1-\frac{\alpha -1}{k})^\alpha} 
	\le 
	\gamma \left(\frac{\alpha}{k}\right)^\alpha
	, 
\end{equation*}
because for $k>\alpha$ we have 
\begin{equation*}
	1 - \frac{\alpha-1}{k} 
	\ge 
	1 - \frac{\alpha-1}{\alpha}
	= 
	\frac{1}{\alpha}
	. 
\end{equation*}
For $1 \le k \le \alpha$ we have $r_{\alpha,\gamma}(k)=
	\frac{\gamma}{k!}$ and $\left(\frac{\alpha}{k}\right)^\alpha \ge 1$, and hence 
\begin{equation*}
	r_{\alpha,\gamma}(k) 
	 \le \gamma \left(\frac{\alpha}{k}\right)^\alpha
	.
\end{equation*}
This finishes the proof.
\end{proof}

Note that for $\alpha=1$ we have $\sum_{k\in \NN_0}r_{\alpha,\gamma}(k)=\infty$. Nevertheless, from \cite[Lemma~1]{DILP18} we know that for all $\bsk \in \NN_0$ and for all $\bsx \in \RR^s$ we even have $$|H_{\bsk}(\bsx) \sqrt{\varphi_s(\bsx)}| \le \prod_{j=1}^s \min\left(1,\frac{\sqrt{\pi}}{k_j^{1/12}}\right).$$ This is a slight improvement of Cramer's bound mentioned earlier in this paper. From this estimate it follows again that $K_{s,\alpha,\bsgamma}(\bsx,\bsy)$ is well defined for all $\alpha\ge 1$ and for all $\bsx,\bsy \in \RR^s$, since
\begin{eqnarray*}
|K_{r_{s,\alpha,\bsgamma}}(\bsx,\bsy)| & \le & \sum_{\bsk \in \NN_0^s} r_{s,\alpha,\bsgamma}(\bsk) |H_{\bsk}(\bsx) H_{\bsk}(\bsy)|\\
& \le & \frac{1}{\sqrt{\varphi_s(\bsx)\varphi_s(\bsy)}}\sum_{\bsk \in \NN_0^s} r_{s,\alpha,\bsgamma}(\bsk) \prod_{j=1}^s \min\left(1,\frac{\pi}{k_j^{1/6}}\right)\\
& \le & \frac{1}{\sqrt{\varphi_s(\bsx)\varphi_s(\bsy)}} \prod_{j=1}^s \left(1+\gamma_j \alpha^{\alpha} \left(\sum_{1<k<\pi ^6} \frac{1}{k^{\alpha}} +\pi \sum_{k\ge \pi^6} \frac{1}{k^{\alpha +1/6}}\right)\right) <  \infty.
\end{eqnarray*}

Now we explain how the parameter $\alpha$ is related to the smoothness of the functions from the Hermite space $\cH_{r_{s,\alpha,\bsgamma}}$ whenever $\alpha$ is an integer. Let $\alpha \in \NN$. For $f\in\cH_{r_{s,\alpha,\bsgamma}}$ we have the Hermite expansion, see \cite{IL},
\begin{align*}
f(\bsx)=\sum_{\bsk\in\NN_0^s}\widehat{f}(\bsk)H_{\bsk}(\bsx)\qquad\text{for all }\bsx\in\RR^s
\end{align*}
and for any $\bstau=(\tau_1,\ldots,\tau_s)\in\NN_0^s$ with $\bstau \le \alpha $ we have that
\begin{align}\label{eq:partabl}
\partial_{\bsx}^{\bstau}f  \sim \sum_{\bsk\geq\bstau}\widehat{f}(\bsk)\sqrt{\frac{\bsk!}{(\bsk-\bstau)!}}\,H_{\bsk-\bstau}.
\end{align}
For $s \in \NN$ we write $[s]:=\{1,2,\ldots,s\}$. Observe also the use of the standard multiindex notation 
$\bstau!=\prod_{j=1}^s\tau_j!$ and $\bstau\le \alpha$, which means that $\tau_j \le \alpha$ for all $j\in [s]$ for $\bstau\in\NN_0^s$ and likewise $\bstau \le \bsk$, which means that $\tau_j \le k_j$ for all $j \in [s]$, for $\bstau,\bsk \in \NN_0^s$. Then the inner product of the weighted Hermite space $\cH_{r_{s,\alpha,\bsgamma}}$ can be written as

\begin{align}\label{eq:inprod}
\langle f, g \rangle_{r_{s,\alpha,\bsgamma}}&=\sum_{\uu\subseteq[s]}\sum_{\bstau_{\uu}\in\{0,\ldots,\alpha-1\}^{|\uu|}}\gamma_{\bstau_\uu}^{-1} \int_{\RR^{s-|\uu|}}\left(\int_{\RR^{|\uu|}}\partial_{\bsx}^{(\bstau_{\uu},\alpha_{-\uu})}f(\bsx)\varphi_{|\uu|}(\bsx_{\uu})\rd\bsx_{\uu}\right)\nonumber\\
&\qquad\times\left(\int_{\RR^{|\uu|}}\partial_{\bsx}^{(\bstau_{\uu},\alpha_{-\uu})}g(\bsx)\varphi_{|\uu|}(\bsx_{\uu})\rd\bsx_{\uu}\right)\varphi_{s-|\uu|}(\bsx_{-\uu})\rd\bsx_{-\uu},
\end{align}
where $(\bstau_{\uu},\alpha_{-\uu})\in \NN_0^s$ denotes the multiindex  for which the $j$-th component equals  $\alpha$ for $j\notin\uu$ and $\tau_j$ for $j\in\uu$,   and where $\gamma_{\bstau_{\uu}}$ is the product of the $\gamma_j$ over those $j$ for which the $j$-th component of $(\bstau_{\uu},\alpha_{-\uu})$ does not equal  $0$, i.e.,
\begin{align*}
\gamma_{\bstau_{\uu}}=\prod_{\substack{j =1\\ \tau_j\neq0\vee j\notin\uu}}^s\gamma_j=\left(\prod_{j \in [s] \setminus \uu} \gamma_j\right) \prod_{\substack{j \in \uu \\ \tau_j \not=0}} \gamma_j,
\end{align*} 
and $\partial_{\bsx}^{\bseta}=\frac{\partial^{\eta_1}}{(\partial x_1)^{\eta_1}}\cdots\frac{\partial^{\eta_s}}{(\partial x_s)^{\eta_s}}$ for $\bseta=(\eta_1,\ldots,\eta_s)\in\NN_0^s$.
Hence we may express the norm in $\cH_{r_{s,\alpha,\bsgamma}}$ as a certain instance of a Sobolev type norm in the form
\begin{align}\label{eq:norm}
\|f\|_{r_{s,\alpha,\bsgamma}}^2 &=\sum_{\uu\subseteq[s]}\sum_{\bstau_{\uu}\in\{0,\ldots,\alpha-1\}^{|\uu|}}\gamma_{\bstau_{\uu}}^{-1} \int_{\RR^{s-|\uu|}}\left(\int_{\RR^{|\uu|}}\partial_{\bsx}^{(\bstau_{\uu},\alpha_{-\uu})}f(\bsx)\varphi_{|\uu|}(\bsx_{\uu})\rd\bsx_{\uu}\right)^2\varphi_{s-|\uu|}(\bsx_{-\uu})\rd\bsx_{-\uu}\,,
\end{align}
where $-\uu:=[s]\setminus \uu$. We will provide a proof of \eqref{eq:norm} (and thus of \eqref{eq:inprod}) shortly. 
In particular, a finite norm for $f \in \cH_{r_{s,\alpha,\bsgamma}}$ requires that all partial mixed derivatives of $f$ of order $\alpha$ in every coordinate direction are  square integrable. Actually, as a vector space,  $\cH_{r_{s,\alpha,\bsgamma}}$ is precisely the space of continuous functions on  $\RR^s$, for which for every $\bstau\le \alpha$ the $\bstau$-th mixed weak partial derivative exists and is square integrable. 

In order to have a concrete impression we describe the following special instances.
 
\begin{example}\label{ex1}\rm
For example, for $s=\alpha=1$ we have
$$\|f\|_{r_{1,1,\gamma}}^2 = \left(\int_{\RR} f(x) \varphi(x)\rd x\right)^2+\frac{1}{\gamma} \int_{\RR} (f'(x))^2 \varphi(x) \rd x,$$
for $s=1$, $\alpha \in \NN$ we have
\begin{equation}\label{fo_norm1a}
\|f\|_{r_{1,\alpha,\gamma}}^2 = \left(\int_{\RR} f(x) \varphi(x)\rd x\right)^2+\frac{1}{\gamma} \sum_{\tau=1}^{\alpha-1} \left(\int_{\RR} f^{(\tau)}(x) \varphi(x) \rd x\right)^2+\frac{1}{\gamma} \int_{\RR}(f^{(\alpha)}(x))^2 \varphi(x) \rd x,
\end{equation}
and for $s=2$, $\alpha=1$ we have
\begin{align*}
\|f\|_{r_{2,1,\gamma}}^2 = & \left( \int_{\RR^2} f(x_1,x_2) \varphi(x_1)\varphi(x_2) \rd (x_1,x_2)\right)^2 \\
& +\frac{1}{\gamma_1} \int_{\RR} \left(\int_{\RR} \frac{\partial f(x_1,x_2)}{\partial x_1} \varphi(x_2) \rd x_2 \right)^2 \varphi(x_1) \rd x_1 \\
& +\frac{1}{\gamma_2} \int_{\RR} \left(\int_{\RR} \frac{\partial f(x_1,x_2)}{\partial x_2} \varphi(x_1) \rd x_1 \right)^2 \varphi(x_2) \rd x_2 \\
& + \frac{1}{\gamma_1 \gamma_2} \int_{\RR^2} \left(\frac{\partial^2 f(x_1,x_2)}{\partial x_1 \partial x_2}\right)^2 \varphi(x_1)\varphi(x_2) \rd (x_1,x_2).
\end{align*} 

\end{example}

\begin{proof}[Proof of Equation~\eqref{eq:norm} ]
We  introduce another multiindex notation: $(\bsk-\bstau)_{\uu}:=\bsk_{\uu}-\bstau_{\uu}$ resp.~$(\bsk-\alpha)_{-\uu}:=\bsk_{-\uu}-\alpha_{-\uu}$. With this we write
\begin{align*}
\partial_\bsx^{(\bstau_\uu,\alpha_{-\uu})} f(\bsx)
&\sim \sum_{\bsk\ge (\bstau_\uu,\alpha_{-\uu})}\sqrt{\frac{\bsk!}{(\bsk-(\bstau_\uu,\alpha_{-\uu}))!}}\widehat f(\bsk)H_{\bsk-(\bstau_\uu,\alpha_{-\uu})}(\bsx)\\
&=\sum_{\bsk_{-\uu}\ge \alpha_{-\uu}}\sum_{\bsk_\uu\ge \bstau_\uu}
\sqrt{\frac{\bsk_{-\uu}!}{(\bsk-\alpha)_{-\uu}!}}\sqrt{\frac{\bsk_\uu!}{(\bsk-\bstau)_\uu!}}\widehat f(\bsk)
H_{(\bsk-\alpha)_{-\uu}}(\bsx_{-\uu})H_{(\bsk-\bstau)_\uu}(\bsx_\uu).
\end{align*}
Thus 
\begin{align*}
\lefteqn{\left(\int_{\RR^{|\uu|}}\partial_\bsx^{(\bstau_\uu,\alpha_{-\uu})} f(\bsx)\varphi_{|\uu|}(\bsx_\uu)d\bsx_\uu\right)^2}\\
&=\left(\sum_{\bsk_{-\uu}\ge \alpha_{-\uu}}
\sqrt{\frac{\bsk_{-\uu}!}{(\bsk-\alpha)_{-\uu}!}}\sqrt{\bstau_\uu!}\,
\widehat f(\bstau_\uu,\bsk_{-\uu})
H_{(\bsk-\alpha)_{-\uu}}(\bsx_{-\uu})\right)^2\\
&=\bstau_\uu!\sum_{\bsk_{-\uu}\ge \alpha_{-\uu}}\sum_{\bsl_{-\uu}\ge \alpha_{-\uu}}
\sqrt{\frac{\bsk_{-\uu}!}{(\bsk-\alpha)_{-\uu}!}}\sqrt{\frac{\bsl_{-\uu}!}{(\bsl-\alpha)_{-\uu}!}}
\widehat f(\bstau_\uu,\bsk_{-\uu})
H_{(\bsk-\alpha)_{-\uu}}(\bsx_{-\uu})H_{(\bsl-\alpha)_{-\uu}}(\bsx_{-\uu})
\end{align*}
such that, using the orthogonality of the multidimensional Hermite polynomials,
\begin{align*}
\lefteqn{\int_{\RR^{s-|\uu|}}\left(\int_{\RR^{|\uu|}}\partial_\bsx^{(\bstau_\uu,\alpha_{-\uu})} f(\bsx)\varphi_{|\uu|}(\bsx_\uu)d\bsx_\uu\right)^2\varphi_{s-|\uu|}(\bsx_{-\uu})\rd\bsx_{-\uu}}\\
&=\bstau_\uu!\sum_{\bsk_{-\uu}\ge \alpha_{-\uu}}
\frac{\bsk_{-\uu}!}{(\bsk-\alpha)_{-\uu}!}
\widehat f(\bstau_\uu,\bsk_{-\uu})^2.
\end{align*}
With this we finally get 
\begin{align*}
\lefteqn{\sum_{\uu\subseteq[s]}\sum_{\bstau_\uu\in \{0\,\ldots,\alpha-1\}^{|\uu|}}\bsgamma_{\bstau_\uu}^{-1}\int_{\RR^{s-|\uu|}}\left(\int_{\RR^{|\uu|}}\partial_\bsx^{(\bstau_\uu,\alpha_{-\uu})} f(\bsx)\varphi_{|\uu|}(\bsx_\uu)d\bsx_\uu\right)^2\varphi_{s-|\uu|}(\bsx_{-\uu})\rd\bsx_{-\uu}}\\
&=\sum_{\uu\subseteq[s]}\sum_{\bstau_\uu\in \{0\,\ldots,\alpha-1\}^{|\uu|}}\sum_{\bsk_{-\uu}\ge \alpha_{-\uu}}
\Bigg(\prod_{\substack{j\in \uu\\\tau_j\ne 0}}\gamma_j^{-1}\Bigg)\bstau_\uu!\Bigg(\prod_{\ell\in [s]\setminus \uu}\gamma_\ell^{-1}\Bigg)
\frac{\bsk_{-\uu}!}{(\bsk-\alpha)_{-\uu}!}
\widehat f(\bstau_\uu,\bsk_{-\uu})^2\\
&=\sum_{\bsk\in \NN^s}\frac{1}{r_{s,\alpha,\gamma}(\bsk)}\widehat f(\bsk)^2 =\|f\|^2_{r_{s,\alpha,\gamma}}.
\end{align*}
\end{proof}

Equations~\eqref{eq:inprod} and \eqref{eq:norm} show that the Hermite space with the present choice of $r_{s,\alpha,\bsgamma}$ can be interpreted as a Gaussian ANOVA space on the $\RR^s$ or as a Gaussian unanchored Sobolev space of functions on the $\RR^s$.

We have an interesting integral representation of the kernel in the one-dimensional case. In the following we use the notation $\Phi(y):=\int_{-\infty}^y \varphi(\eta)\rd \eta$ and 
\[
\vartheta(x,y):=1_{(-\infty,x]}(y)\Phi(y)-1_{(x,\infty)}(y)\Phi(-y)
\]
for $x,y \in \RR$.

\begin{theorem}\label{pr:intrep}
For $\alpha \in \NN$ and $\gamma>0$ we have
\begin{eqnarray*}
\lefteqn{K_{r_{\alpha,\gamma}}(x,y)}\\
& = & 1+\gamma\sum_{k=1}^{\alpha-1} \frac{H_k(x)H_k(y)}{k!}\\
& & + \gamma \int_{\RR} \frac{1}{\varphi(s)} \left(\int_{\RR^{2\alpha-2}} \vartheta_\alpha(x,\xi_{\alpha-1},\ldots,\xi_1,s)\vartheta_\alpha(y,\eta_{\alpha-1},\ldots,\eta_1,s)  \prod_{k=1}^{\alpha-1}\big(\rd \xi_k \rd\eta_k\big)\right) \rd s,
\end{eqnarray*}
where $\vartheta_n(z_1,\ldots,z_{n+1}):=\prod_{k=1}^{n}\vartheta(z_{k},z_{k+1})$ for $n\in \NN$.
\end{theorem}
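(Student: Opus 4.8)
The plan is to split the defining series \eqref{def:Hkernel} for $K_{r_{\alpha,\gamma}}$ according to the three regimes of $r_{\alpha,\gamma}$. The summands with $k=0$ and with $1\le k\le\alpha-1$ reproduce verbatim the terms $1$ and $\gamma\sum_{k=1}^{\alpha-1}H_k(x)H_k(y)/k!$ of the asserted formula, so the whole claim reduces to identifying the tail
$$\sum_{k\ge\alpha}\frac{(k-\alpha)!}{k!}\,H_k(x)H_k(y)$$
with the stated integral (the prefactor $\gamma$ being common). The key is a single-variable identity: for the integral operator $V$ with Lebesgue kernel $\vartheta$, i.e. $(Vh)(x):=\int_\RR\vartheta(x,u)h(u)\rd u$, one has
$$(V H_j)(x)=\frac{1}{\sqrt{j+1}}\,H_{j+1}(x)\qquad(j\in\NN_0).$$
I would prove this directly from the piecewise description $\vartheta(x,u)=\Phi(u)$ for $u\le x$ and $\vartheta(x,u)=-\Phi(-u)$ for $u>x$: the fundamental theorem of calculus combined with $\Phi(x)+\Phi(-x)=1$ gives $\frac{\rd}{\rd x}(VH_j)(x)=H_j(x)$, while $\int_\RR\vartheta(x,u)\varphi(x)\rd x=0$ (again by $\Phi(u)+\Phi(-u)=1$) shows that $VH_j$ has vanishing mean against $\varphi$. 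Since $\frac{1}{\sqrt{j+1}}H_{j+1}$ is, by \eqref{eq:partabl}, the unique mean-zero antiderivative of $H_j$, the identity follows.

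Iterating $V$ exactly $\alpha$ times yields $(V^\alpha H_j)(x)=\sqrt{j!/(j+\alpha)!}\,H_{j+\alpha}(x)$, where $V^\alpha$ has the Lebesgue kernel
$$\Theta_\alpha(x,s):=\int_{\RR^{\alpha-1}}\vartheta_\alpha(x,\xi_{\alpha-1},\ldots,\xi_1,s)\,\rd\xi_1\cdots\rd\xi_{\alpha-1}$$
(for $\alpha=1$ the composition is empty and $\Theta_1=\vartheta$). Equivalently, writing $k=j+\alpha$, the function $u\mapsto\Theta_\alpha(x,u)/\varphi(u)$ has $j$-th Hermite coefficient $\sqrt{(k-\alpha)!/k!}\,H_k(x)$. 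Applying Parseval's identity in $L_2(\RR,\varphi)$ to $\Theta_\alpha(x,\cdot)/\varphi$ and $\Theta_\alpha(y,\cdot)/\varphi$ then gives
$$\sum_{k\ge\alpha}\frac{(k-\alpha)!}{k!}\,H_k(x)H_k(y)=\int_\RR\frac{\Theta_\alpha(x,u)\,\Theta_\alpha(y,u)}{\varphi(u)}\,\rd u,$$
and substituting the definition of $\Theta_\alpha$, relabelling $u=s$, and merging the two inner integrals into one over $\RR^{2\alpha-2}$ produces exactly the asserted double integral.

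The main obstacle is not the algebra but the justification of the analytic steps. I would first check the integrability needed to make $V$, and its iterates, well defined and to run Fubini's theorem in the composition $\Theta_\alpha$; this rests on the Mills-ratio decay $\Phi(-|u|)\asymp\varphi(u)/|u|$, which makes $\vartheta(x,u)^2/\varphi(u)$ integrable in $u$. More importantly, Parseval's identity requires $\Theta_\alpha(x,\cdot)/\varphi\in L_2(\RR,\varphi)$; this holds because its squared $L_2(\RR,\varphi)$-norm equals $\sum_{k\ge\alpha}\frac{(k-\alpha)!}{k!}H_k(x)^2$, which is finite since $K_{r_{\alpha,\gamma}}(x,x)<\infty$ as established above. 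The one genuinely delicate computation is differentiating $(VH_j)(x)$ across the split at $u=x$; I would carry it out through the explicit piecewise form rather than through a distributional ($\delta$-function) identity for $\partial_x\vartheta$.
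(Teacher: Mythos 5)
Your proposal is correct in substance but follows a genuinely different route from the paper. The paper never expands the kernel in its Hermite series at all: it decomposes the one-dimensional space $\cH_\alpha$ into the orthogonal subspaces of polynomials of degree less than $\alpha$ and of functions whose first $\alpha-1$ derivatives have zero $\varphi$-mean, invokes Aronszajn's sum property $K_\alpha=K_{1,\alpha}+K_{2,\alpha}$, computes $K_{1,\alpha}$ by Taylor-type arguments, and then characterizes $K_{2,\alpha}$ by exploiting the reproducing property together with a primitive-function lemma (the unique zero-mean antiderivative of $h$ is $x\mapsto\int_\RR h(y)\vartheta(x,y)\rd y$), integrating up the kernel step by step and checking that all constants of integration vanish, with an induction on $\alpha$. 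You instead work directly with the series \eqref{def:Hkernel}: the shift identity $(VH_j)=\frac{1}{\sqrt{j+1}}H_{j+1}$ for the operator $V$ with kernel $\vartheta$ (which is exactly the content of the paper's primitive-function lemma applied to $h=H_j$), its $\alpha$-fold iterate, and Parseval in $L_2(\RR,\varphi)$ identify the tail $\sum_{k\ge\alpha}\frac{(k-\alpha)!}{k!}H_k(x)H_k(y)$ with the double integral. Your route avoids the RKHS decomposition and the bookkeeping of the integration constants $c_1,c_2,c_3$, and replaces the induction by a clean operator-iteration argument; the price is that all the analytic weight falls on Fubini and on square-integrability of the iterated kernel.

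One step as written is circular and needs repair: you justify $\Theta_\alpha(x,\cdot)/\varphi\in L_2(\RR,\varphi)$ by saying its squared norm equals $\sum_{k\ge\alpha}\frac{(k-\alpha)!}{k!}H_k(x)^2$, but that identity is Parseval and already presupposes membership in $L_2$; Bessel only gives an inequality in the wrong direction. For $\alpha=1$ your Mills-ratio bound $\Phi(-|u|)\le\varphi(u)$ (for $|u|\ge 1$) settles it directly, and the same idea fixes the general case: one shows by induction that $|\Theta_\alpha(x,u)|$ is bounded, for $|u|$ large, by a polynomial in $|u|$ times $\Phi(-|u|)$ (the intermediate variables range over an interval of length $O(|u|)$ where $\vartheta$ is bounded, plus exponentially small tails), whence $\Theta_\alpha(x,u)^2/\varphi(u)$ is integrable. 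With that estimate inserted, your argument is complete and yields the theorem.
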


A  proof for  this representation will be given in Appendix~\ref{appA}.

\subsection{A first variant of the Gaussian ANOVA space $\cH_{r_{s,\alpha,\bsgamma}}$}

In \cite{IL}, Irrgeher and Leobacher define  in a similar way a variant of the Hermite space. They consider the reproducing kernel  \eqref{def:Hkernel} with corresponding inner product given by  $R(\bsk)=\rho_{s,\alpha,\bsgamma}(\bsk) := \prod_{j=1}^s \rho_{\alpha,\gamma_j}(k_j)$ with \begin{equation*}
\rho_{\alpha,\gamma}(k) :=
\left\{\begin{array}{ll}
1 & \text{for } k=0, \\[0.5em]
\frac{\gamma}{k^{\alpha}}  & \text{for } k \ge 1,
\end{array}\right.
\end{equation*}
for $\alpha \ge 1$ and a generic weight $\gamma \in (0,1]$. Note that these Fourier weights are equal to those of  the classical Korobov space of smoothness $\alpha$. Denote the corresponding reproducing kernel Hilbert space by $\cH_{\rho_{s,\alpha,\bsgamma}}:=\cH(K_{\rho_{s,\alpha,\bsgamma}})$.

For the norm $\|\cdot\|_{\rho_{s,\alpha,\bsgamma}}$ we do not have a representation as a Sobolev type norm like in \eqref{eq:norm} for the norm $\|\cdot\|_{r_{s,\alpha,\bsgamma}}$.

\begin{proposition}\label{pr:r_rho}
We have $$\|f\|_{r_{s,\alpha,\bsgamma}} \le \|f\|_{\rho_{s,\alpha,\bsgamma}} \le \|f\|_{r_{s,\alpha,\bsgamma/\alpha^{\alpha}}},$$ where $\bsgamma/\alpha^{\alpha}:=(\gamma_j/\alpha^{\alpha})_{j \ge 1}$. In particular $\cH_{\rho_{s,\alpha,\bsgamma}}$ is continuously embedded in the space $\cH_{r_{s,\alpha,\bsgamma}}$ and the norm of the embedding operator is bounded by~1. 
\end{proposition}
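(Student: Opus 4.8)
The plan is to reduce everything to a pointwise comparison of the reciprocal Fourier weights. By the definition of the inner product \eqref{def:iprod}, for any admissible weight function $R$ the squared norm is $\|f\|_R^2 = \sum_{\bsk \in \NN_0^s} R(\bsk)^{-1}\, \widehat{f}(\bsk)^2$, and the Hermite coefficients $\widehat{f}(\bsk)$ do not depend on the choice of $R$. Hence, to establish $\|f\|_{R_1} \le \|f\|_{R_2}$ for all $f$, it suffices to show $R_1(\bsk) \ge R_2(\bsk)$ for every $\bsk \in \NN_0^s$. I would therefore translate the two desired norm inequalities into the single chain of pointwise statements
\[
r_{s,\alpha,\bsgamma}(\bsk) \ge \rho_{s,\alpha,\bsgamma}(\bsk) \ge r_{s,\alpha,\bsgamma/\alpha^{\alpha}}(\bsk) \qquad \text{for all } \bsk \in \NN_0^s.
\]

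Because all three weight functions are products over the coordinate directions (for instance $r_{s,\alpha,\bsgamma}(\bsk) = \prod_{j=1}^s r_{\alpha,\gamma_j}(k_j)$) and every factor is strictly positive, the product inequalities follow as soon as the corresponding one-dimensional inequalities hold factorwise. Thus the whole statement reduces to proving, for each fixed $\gamma \in (0,1]$ and every $k \in \NN_0$, that
\[
r_{\alpha,\gamma}(k) \ge \rho_{\alpha,\gamma}(k) \ge r_{\alpha,\gamma/\alpha^{\alpha}}(k).
\]

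For $k=0$ all three quantities equal $1$, so both inequalities hold with equality (note that Lemma~\ref{le:bdrk} only treats $k \ge 1$). For $k \ge 1$ I would invoke Lemma~\ref{le:bdrk} directly. Its lower bound $r_{\alpha,\gamma}(k) \ge \gamma/k^{\alpha} = \rho_{\alpha,\gamma}(k)$ yields the left inequality at once. For the right inequality, applying the upper bound of Lemma~\ref{le:bdrk} with the rescaled weight $\gamma/\alpha^{\alpha}$ in place of $\gamma$ gives $r_{\alpha,\gamma/\alpha^{\alpha}}(k) \le (\gamma/\alpha^{\alpha})(\alpha/k)^{\alpha} = \gamma/k^{\alpha} = \rho_{\alpha,\gamma}(k)$; here the factor $\alpha^{\alpha}$ is chosen precisely so as to cancel the $\alpha^{\alpha}$ appearing in the upper bound of the lemma.

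Finally, the embedding claim is just a reformulation of the first inequality: $\|f\|_{r_{s,\alpha,\bsgamma}} \le \|f\|_{\rho_{s,\alpha,\bsgamma}}$ shows in particular that every $f$ of finite $\rho_{s,\alpha,\bsgamma}$-norm has finite $r_{s,\alpha,\bsgamma}$-norm, so that $\cH_{\rho_{s,\alpha,\bsgamma}} \subseteq \cH_{r_{s,\alpha,\bsgamma}}$ and the inclusion (identity) map has operator norm at most $1$. There is essentially no genuine obstacle here beyond bookkeeping; the only points requiring care are tracking the direction of the inequalities through the reciprocal in the norm definition, and verifying that the constant $\alpha^{\alpha}$ defining the third space is exactly the one forced by the upper bound in Lemma~\ref{le:bdrk}.
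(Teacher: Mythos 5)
Your proof is correct and follows essentially the same route as the paper: both arguments reduce the norm inequalities to the pointwise comparison of the one-dimensional Fourier weights $\rho_{\alpha,\gamma}(k)\le r_{\alpha,\gamma}(k)\le \alpha^{\alpha}\rho_{\alpha,\gamma}(k)=\alpha^{\alpha} r_{\alpha,\gamma/\alpha^{\alpha}}(k)$ supplied by Lemma~\ref{le:bdrk}, extend it over the product structure, and conclude via the termwise Hermite-coefficient representation of the norms. Your use of the lemma with the rescaled weight $\gamma/\alpha^{\alpha}$ is just a repackaging of the paper's identity $r_{\alpha,\gamma}(k)/\alpha^{\alpha}=r_{\alpha,\gamma/\alpha^{\alpha}}(k)$ for $k\ge 1$, so there is no substantive difference.
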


\begin{proof}
According to Lemma~\ref{le:bdrk} we have 
$$\rho_{\alpha,\gamma}(k) \le r_{\alpha,\gamma}(k) \le \alpha^{\alpha} \rho_{\alpha,\gamma}(k)=\rho_{\alpha,\alpha^{\alpha}\gamma}(k)\quad \mbox{for all $k \in \NN$}$$ and obviously $\rho_{\alpha,\gamma}(0)=r_{\alpha,\gamma}(0)=1$. Hence $$\rho_{s,\alpha,\bsgamma}(\bsk) \le r_{s,\alpha,\bsgamma}(\bsk) \le \alpha^{\alpha |\uu(\bsk)|} \rho_{s,\alpha,\bsgamma}(\bsk) \quad \mbox{for all $\bsk \in \NN_0^s$,}$$ where for $\bsk \in \NN_0^s$ we write $\uu(\bsk):=\{j \in [s]\ : \ k_j \not=0\}$, and hence
\begin{eqnarray*}
\|f\|_{r_{s,\alpha,\bsgamma}}^2 & = & \sum_{\bsk \in \NN_0^s} \frac{1}{r_{s,\alpha,\bsgamma}(\bsk)} |\widehat{f}(\bsk)|^2
 \le  \sum_{\bsk \in \NN_0^s} \frac{1}{\rho_{s,\alpha,\bsgamma}(\bsk)} |\widehat{f}(\bsk)|^2
 =  \|f\|_{\rho_{s,\alpha,\bsgamma}}^2\\
& = & \sum_{\uu \subseteq [s]} \sum_{\bsk_{\uu} \in \NN^{|\uu|}} \frac{1}{\prod_{j \in \uu} \rho_{\alpha,\gamma_j}(k_j)} \, |\widehat{f}(\bsk_{\uu},0)|^2\\
& \le & \sum_{\uu \subseteq [s]} \sum_{\bsk_{\uu} \in \NN^{|\uu|}} \frac{1}{\prod_{j \in \uu} (r_{\alpha,\gamma_j}(k_j)/\alpha^{\alpha})} \, |\widehat{f}(\bsk_{\uu},0)|^2\\
& = & \sum_{\uu \subseteq [s]} \sum_{\bsk_{\uu} \in \NN^{|\uu|}} \frac{1}{\prod_{j \in \uu} r_{\alpha,\gamma_j/\alpha^{\alpha}}(k_j)} \, |\widehat{f}(\bsk_{\uu},0)|^2
 =  \|f\|_{r_{s,\alpha,\bsgamma/\alpha^{\alpha}}}^2.
\end{eqnarray*} 
Here for $\bsk=(k_1,\ldots,k_s)$ and $\uu \subseteq [s]$ we write $(\bsk_{\uu},0)$ for the $s$-dimensional vector whose $j$-th component is $k_j$ if $j \in \uu$ and 0 otherwise.
\end{proof}

\subsection{A second variant of the Gaussian ANOVA space $\cH_{r_{s,\alpha,\bsgamma}}$}
 
In \cite{DILP18} a further Sobolev type norm was considered, namely
\begin{align*}
\|f\|^2 &:=\sum_{\bstau \in\{0,\ldots,\alpha\}^s} \int_{\RR^s} (\partial_{\bsx}^{\bstau} f(\bsx))^2 \varphi_s(\bsx)\rd\bsx.
\end{align*}
A weighted variant of this is
\begin{align}\label{def:normv2}
\|f\|_{\psi_{s,\alpha,\bsgamma}}^2 &:=\sum_{\bstau \in\{0,\ldots,\alpha\}^s} \Bigg(\prod_{j=1\atop \tau_j \not=0}^s\gamma_j^{-1}\Bigg) \int_{\RR^s} (\partial_{\bsx}^{\bstau} f(\bsx))^2 \varphi_s(\bsx)\rd\bsx.
\end{align}
The meaning of $\psi$ will be explained shortly.


\begin{example}\label{ex2}\rm
As an example, in the univariate case with a generic weight $\gamma>0$ the squared norm can be written in the form $$\|f\|_{\psi_{1,\alpha,\bsgamma}}^2 = \int_{\RR} (f(x))^2 \varphi(x) \rd x + \frac{1}{\gamma} \sum_{\tau=1}^{\alpha} \int_{\RR} (f^{(\tau)}(x))^2 \varphi(x) \rd x,$$ which should be compared with \eqref{fo_norm1a} in Example~\ref{ex1}.  
\end{example}

Likewise, the norm \eqref{def:normv2} can be represented as a Hermite-type norm and this will explain the $\psi$ in our notation. Using \eqref{eq:partabl} we have $$\int_{\RR^s} (\partial_{\bsx}^{\bstau} f(\bsx))^2 \varphi_s(\bsx)\rd\bsx = \sum_{\bsk \ge \bstau} \frac{\bsk!}{(\bsk-\bstau)!}\, (\widehat{f}(\bsk))^2.$$ Hence
\begin{eqnarray*}
\|f\|_{\psi_{s,\alpha,\bsgamma}}^2 & = & \sum_{\bstau \in\{0,\ldots,\alpha\}^s} \Bigg(\prod_{j=1\atop \tau_j \not=0}^s\gamma_j^{-1}\Bigg) \sum_{\bsk \ge \bstau} \frac{\bsk!}{(\bsk-\bstau)!}\, (\widehat{f}(\bsk))^2\\
& = & \sum_{\bsk \in \NN_0^s} \left(\sum_{\bstau \in\{0,\ldots,\alpha\}^s \atop \bstau \le \bsk} \Bigg(\prod_{j=1\atop \tau_j \not=0}^s\gamma_j^{-1}\Bigg)\frac{\bsk!}{(\bsk-\bstau)!}\right) \, (\widehat{f}(\bsk))^2\\
& = & \sum_{\bsk \in \NN_0^s} \left(\prod_{j=1}^s\left(1+\frac{1}{\gamma_j} \sum_{\tau=1 \atop \tau \le k_j}^{\alpha} \frac{k_j!}{(k_j-\tau)!}\right)\right)  \, (\widehat{f}(\bsk))^2\\
& = & \sum_{\bsk \in \NN_0^s} \left(\prod_{j=1}^s\left(1+\frac{1}{\gamma_j} \sum_{\tau=1}^{\alpha} \beta_{\tau}(k_j)\right)\right)  \, (\widehat{f}(\bsk))^2,
\end{eqnarray*}
where for $k \in \NN_0$, 
$$\beta_{\tau}(k):=\left\{ 
\begin{array}{ll}
\frac{k!}{(k-\tau)!} & \mbox{if $k \ge \tau$,}\\[0.5em]
0 & \mbox{otherwise.}
\end{array}
\right.$$

Setting, for $k \in \NN_0$ and a generic weight $\gamma>0$, $$\psi_{\alpha,\gamma}(k):=\left(1+\frac{1}{\gamma} \sum_{\tau=1}^{\alpha} \beta_{\tau}(k_j)\right)^{-1}$$ and for $\bsk=(k_1,\ldots,k_s)\in \NN_0$, $\psi_{s,\alpha,\bsgamma}(\bsk):=\prod_{j=1}^s \psi_{\alpha,\gamma_j}(k_j)$, then 
$$\|f\|_{\psi_{s,\alpha,\bsgamma}}^2 = \sum_{\bsk \in \NN_0^s} \frac{1}{\psi_{s,\alpha,\bsgamma}(\bsk)}  \, (\widehat{f}(\bsk))^2.$$ 
Thus, via the norm $\|\cdot\|_{\psi_{s,\alpha,\bsgamma}}$  we obtain a Hermite space  $\cH_{\psi_{s,\alpha,\bsgamma}}$ with reproducing kernel of the form \eqref{def:Hkernel} with Fourier weights $R(\bsk)= \psi_{s,\alpha,\bsgamma}(\bsk)$. 

\begin{remark}\rm
Using the method of Thomas-Agnan~\cite{thomas96} the kernel $K_{\psi_{1,1,\gamma}}$ ($s=1$ and $\alpha=1$) can be expressed by means of solutions of the second order differential equation 
\begin{equation*}
g''(y)=y g'(y)+\gamma g(y) 
\end{equation*}
with certain boundary conditions. We omit the details of this observation.
\end{remark}

\begin{proposition}\label{pr:r_psi}
We have $$\|f\|_{r_{s,\alpha,\bsgamma}}\le \|f\|_{\psi_{s,\alpha,\bsgamma}}\le \|f\|_{r_{s,\alpha,\bsgamma/(2 \alpha^{\alpha})}},$$ where $\bsgamma/(2 \alpha^{\alpha}):=(\gamma_j/(2 \alpha^{\alpha}))_{j \ge 1}$. In particular, $\cH_{\psi_{s,\alpha,\bsgamma}}$ is continuously embedded in $\cH_{r_{s,\alpha,\bsgamma}}$ and the norm of the embedding operator is bounded by~1. 
\end{proposition}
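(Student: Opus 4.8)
The plan is to mirror the proof of Proposition~\ref{pr:r_rho}. Both norms admit the series representations
\[
\|f\|_{\psi_{s,\alpha,\bsgamma}}^2=\sum_{\bsk\in\NN_0^s}\frac{1}{\psi_{s,\alpha,\bsgamma}(\bsk)}\,|\widehat f(\bsk)|^2,\qquad
\|f\|_{r_{s,\alpha,\bsgamma}}^2=\sum_{\bsk\in\NN_0^s}\frac{1}{r_{s,\alpha,\bsgamma}(\bsk)}\,|\widehat f(\bsk)|^2,
\]
and both Fourier weights are products over the coordinates. Hence it suffices to prove the two pointwise comparisons $r_{\alpha,\gamma/(2\alpha^\alpha)}(k)\le\psi_{\alpha,\gamma}(k)\le r_{\alpha,\gamma}(k)$ for every fixed $\gamma\in(0,1]$ and all $k\in\NN_0$: taking products over $j\in[s]$ (all factors are positive) gives $r_{s,\alpha,\bsgamma/(2\alpha^\alpha)}(\bsk)\le\psi_{s,\alpha,\bsgamma}(\bsk)\le r_{s,\alpha,\bsgamma}(\bsk)$ for all $\bsk$, whence the two reciprocal weights are ordered the opposite way term by term, and summing against $|\widehat f(\bsk)|^2$ yields the asserted norm inequalities. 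The rescaling of $\bsgamma$ is consistent across coordinates because a coordinate with $k_j=0$ contributes the factor $1$ to both $\psi_{\alpha,\gamma_j}$ and $r_{\alpha,\gamma_j/(2\alpha^\alpha)}$, so I could equally phrase the step through the decomposition over $\uu(\bsk)=\{j\in[s]:k_j\neq0\}$ exactly as in Proposition~\ref{pr:r_rho}.

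For the one-dimensional computations I would use the identities already established in the text preceding the statement: $\psi_{\alpha,\gamma}(k)^{-1}=1+\gamma^{-1}\sum_{\tau=1}^{\alpha}\beta_\tau(k)$, together with $r_{\alpha,\gamma}(k)^{-1}=\gamma^{-1}\beta_{\min(k,\alpha)}(k)$ for $k\ge1$ (both weights equal $1$ at $k=0$), where $\beta_\tau(k)=k!/(k-\tau)!$ for $\tau\le k$ and $0$ otherwise. Writing the exponent as $\min(k,\alpha)$ handles the two regimes $1\le k<\alpha$ and $k\ge\alpha$ uniformly. The left inequality $\psi_{\alpha,\gamma}(k)\le r_{\alpha,\gamma}(k)$ is then immediate, since $\beta_{\min(k,\alpha)}(k)$ is one of the (nonnegative) terms in the sum defining $\psi_{\alpha,\gamma}(k)^{-1}$, so dropping all other terms gives $\psi_{\alpha,\gamma}(k)^{-1}\ge\gamma^{-1}\beta_{\min(k,\alpha)}(k)=r_{\alpha,\gamma}(k)^{-1}$.

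The right inequality is the only step requiring a small argument. The numbers $\beta_\tau(k)$ are nondecreasing in $\tau$ on $\{1,\dots,\min(k,\alpha)\}$, because $\beta_{\tau+1}(k)/\beta_\tau(k)=k-\tau\ge1$ there, and they vanish for $\tau>k$; thus the sum has at most $\alpha$ nonzero terms, each bounded by the top term, so $\sum_{\tau=1}^{\alpha}\beta_\tau(k)\le\alpha\,\beta_{\min(k,\alpha)}(k)$. Using in addition that $r_{\alpha,\gamma}(k)\le1$, i.e.\ $1\le\gamma^{-1}\beta_{\min(k,\alpha)}(k)$, I can absorb the additive $1$ and obtain, for $k\ge1$,
\[
\psi_{\alpha,\gamma}(k)^{-1}=1+\frac{1}{\gamma}\sum_{\tau=1}^{\alpha}\beta_\tau(k)\le(\alpha+1)\,\frac{1}{\gamma}\,\beta_{\min(k,\alpha)}(k)=(\alpha+1)\,r_{\alpha,\gamma}(k)^{-1}.
\]
Finally, since the weight enters linearly, $r_{\alpha,\gamma/c}(k)^{-1}=c\,r_{\alpha,\gamma}(k)^{-1}$ for $k\ge1$, the choice $c=2\alpha^\alpha\ge\alpha+1$ converts the last display into $\psi_{\alpha,\gamma}(k)^{-1}\le r_{\alpha,\gamma/(2\alpha^\alpha)}(k)^{-1}$; the case $k=0$ is trivial as all weights equal $1$.

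I expect no genuine difficulty here: the representations and the product reduction are routine, and the only real content is identifying $r_{\alpha,\gamma}(k)^{-1}$ with the dominant term $\gamma^{-1}\beta_{\min(k,\alpha)}(k)$ of the $\psi$-sum, then bounding the remaining terms by monotonicity and the at-most-$\alpha$ count, and absorbing the constant $1$ via $r_{\alpha,\gamma}(k)\le1$. The stated constant $2\alpha^\alpha$ is comfortably larger than the sharp value $\alpha+1$ this argument produces, which I would simply retain to match the statement.
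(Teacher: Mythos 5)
Your proposal is correct, and while it shares the overall skeleton of the paper's proof (reduce to a one-dimensional comparison of Fourier weights, then propagate through the product structure and the coefficient expansion of the norms), both one-dimensional steps are handled differently. For the left inequality the paper works on the Sobolev side: it applies the Cauchy--Schwarz (Jensen) inequality to show that each term $\int(\int \partial^{(\bstau_\uu,\alpha_{-\uu})}f\,\varphi_{|\uu|}\rd\bsx_\uu)^2\varphi_{s-|\uu|}\rd\bsx_{-\uu}$ of $\|f\|_{r_{s,\alpha,\bsgamma}}^2$ is dominated by the corresponding term $\int(\partial^{(\bstau_\uu,\alpha_{-\uu})}f)^2\varphi_s\rd\bsx$ of $\|f\|_{\psi_{s,\alpha,\bsgamma}}^2$; you instead observe on the coefficient side that $r_{\alpha,\gamma}(k)^{-1}=\gamma^{-1}\beta_{\min(k,\alpha)}(k)$ is one of the nonnegative summands of $\psi_{\alpha,\gamma}(k)^{-1}$. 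Both are valid; the paper's version explains \emph{why} the inequality holds in terms of the two Sobolev-type norms, whereas yours is shorter and stays entirely within the Fourier-weight formalism. For the right inequality the paper proves $\sum_{\tau=0}^{\alpha}\beta_\tau(k)\le 2k^\alpha$ and then invokes the upper bound of Lemma~\ref{le:bdrk} to convert $\gamma/(2k^\alpha)$ into $r_{\alpha,\gamma}(k)/(2\alpha^\alpha)$; you bypass $k^\alpha$ and Lemma~\ref{le:bdrk} entirely by noting that $\beta_\tau(k)$ is nondecreasing in $\tau$ up to $\min(k,\alpha)$, that at most $\alpha$ terms are nonzero, and that the additive $1$ is absorbed via $r_{\alpha,\gamma}(k)\le 1$. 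This yields the sharper constant $\alpha+1$ in place of $2\alpha^\alpha$, which you correctly check dominates it, so the stated bound follows a fortiori; your route is arguably cleaner and gives a quantitative improvement, at the cost of not reusing the lemma already in place.
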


\begin{proof}
Using the Cauchy-Schwarz inequality we obtain
\begin{align*}
\lefteqn{\int_{\RR^{s-|\uu|}}\left(\int_{\RR^{|\uu|}}\partial_\bsx^{(\bstau_\uu,\alpha_{-\uu})} f(\bsx)\varphi_{|\uu|}(\bsx_\uu)\rd\bsx_\uu\right)^2\varphi_{s-|\uu|}(\bsx_{-\uu})\rd\bsx_{-\uu}}\\
&\le \int_{\RR^{s-|\uu|}}\int_{\RR^{|\uu|}}\left(\partial_\bsx^{(\bstau_\uu,\alpha_{-\uu})} f(\bsx)\right)^2\varphi_{|\uu|}(\bsx_\uu)\rd\bsx_\uu\,\varphi_{s-|\uu|}(\bsx_{-\uu})\rd\bsx_{-\uu}\\
&= \int_{\RR^{s}}\left(\partial_\bsx^{(\bstau_\uu,\alpha_{-\uu})} f(\bsx)\right)^2\varphi_{s}(\bsx)\rd\bsx,
\end{align*}
such that  $\|f\|_{r_{s,\alpha,\bsgamma}}\le \|f\|_{\psi_{s,\alpha,\bsgamma}}$ for all $f\in \cH_{\psi_{s,\alpha,\bsgamma}}$. 

On the other hand, for $k \in \NN$ we have $$\sum_{\tau=0}^{\alpha} \beta_{\tau}(k) \le 2 \, k^{\alpha},$$ because:
\begin{itemize}
\item If $k > \alpha$, then $$\sum_{\tau=0}^{\alpha} \beta_{\tau}(k) =\sum_{\tau=0}^{\alpha} \frac{k!}{(k-\tau)!} \le \sum_{\tau=0}^{\alpha} k^{\tau}=\frac{k^{\alpha+1}-1}{k-1} \le 2\, k^{\alpha}.$$
\item If $1\le k \le \alpha$, then  $$\sum_{\tau=0}^{\alpha} \beta_{\tau}(k) =\sum_{\tau=0}^k \frac{k!}{(k-\tau)!} = k! \sum_{\tau=0}^k \frac{1}{\tau!} \le 2\, k^k \le 2\, k^{\alpha}.$$
\end{itemize} 
Therefore and with Lemma~\ref{le:bdrk}, for $k \in \NN$ we obtain $$\psi_{\alpha,\gamma}(k) \ge \frac{\gamma}{\sum_{\tau=0}^{\alpha} \beta_{\tau}(k)} \ge \frac{\gamma}{2\, k^{\alpha}} \ge \frac{1}{2\, \alpha^{\alpha}} \, r_{\alpha,\gamma}(k).$$ Again, $\psi_{\alpha,\gamma}(0)=1=r_{\alpha,\gamma}(0)$. Hence $$\psi_{s,\alpha,\bsgamma}(\bsk) \ge \left(\frac{1}{2\, \alpha^{\alpha}}\right)^{|\uu(\bsk)|}\, r_{s,\alpha,\bsgamma}(\bsk) \quad \mbox{for all $\bsk \in \NN_0^s$.}$$ Like in the proof of Proposition~\ref{pr:r_rho}, this implies that 
\begin{eqnarray*}
\|f\|_{\psi_{s,\alpha,\bsgamma}} \le \|f\|_{r_{s,\alpha,\bsgamma/(2 \alpha^{\alpha})}}.
\end{eqnarray*}
\end{proof}

\subsection{An anchored space of Sobolev type} \label{subsec:anch}
 
For the sake of completeness we mention also an anchored variant of the ANOVA norm \eqref{eq:norm} with anchor $\bszero=(0,\ldots,0)$, which is given by
$$
\|f\|^2_{\pitchfork,s,\alpha,\bsgamma}:=\sum_{\uu\subseteq[s]}\sum_{\bstau_{\uu}\in\{0,\ldots,\alpha-1\}^{|\uu|}}\gamma_{\bstau_{\uu}}^{-1} \int_{\RR^{s-|\uu|}}\left(\partial_{\bsx}^{(\bstau_{\uu},\alpha_{-\uu})}f(\bsx_{- \uu},0)\right)^2\varphi_{s-|\uu|}(\bsx_{-\uu})\rd\bsx_{-\uu}.
$$

\begin{example}\label{ex:anch1}\rm
For $s=1$ and a generic weight $\gamma>0$  we have
 \[
\|f\|^2_{\pitchfork,1,\alpha,\gamma}=(f(0))^2+\frac{1}{\gamma}\sum_{k=1}^{\alpha-1} (f^{(k)}(0))^2 + \frac{1}{\gamma} \int_\RR (f^{(\alpha)}(y))^2\varphi(y)\rd y.
\]
\end{example}

Denote the corresponding function space by $\cH_{\pitchfork,s,\alpha,\bsgamma}$. Also this space,  the so-called anchored space is a reproducing kernel Hilbert space of tensor product form. For $\bsx,\bsy \in \RR^s$ the reproducing kernel is $$K_{\pitchfork,s,\alpha,\bsgamma}(\bsx,\bsy):=\prod_{j=1}^s K_{\pitchfork,\alpha,\gamma_j}(x_j,y_j),$$ where the kernel $K_{\pitchfork,\alpha,\gamma}$ in the case $s=1$ ist given in the following proposition.

\begin{proposition}\label{pr:rep_anch}
For $x,y \in \RR$ and a generic weight $\gamma>0$ we have
\begin{eqnarray}
K_{\pitchfork,\alpha,\gamma}(x,y) & = & 1+\gamma \sum_{\ell=1}^{\alpha-1} \frac{(x y)^{\ell}}{(\ell!)^2}\label{eq:anchored-kernel-finitea}\\
&& + \gamma \, 1_{[0, \infty)}(x \, y) \int_0^{\infty} \frac{1}{\varphi(s)} \, \frac{(|x|-s)_+^{\alpha-1} (|y|-s)_+^{\alpha-1}}{((\alpha-1)!)^2} \rd s.\nonumber
\end{eqnarray}
\end{proposition}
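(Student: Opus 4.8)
The plan is to work entirely in the case $s=1$, since the full kernel is by definition the product $\prod_{j=1}^s K_{\pitchfork,\alpha,\gamma_j}$ of one-dimensional kernels, and to read off the inner product from the norm in Example~\ref{ex:anch1}, namely
\[
\langle f,g\rangle_{\pitchfork,1,\alpha,\gamma}=f(0)g(0)+\frac1\gamma\sum_{k=1}^{\alpha-1}f^{(k)}(0)g^{(k)}(0)+\frac1\gamma\int_\RR f^{(\alpha)}(y)g^{(\alpha)}(y)\varphi(y)\rd y.
\]
The structural observation I would exploit is that the space splits as an orthogonal direct sum $\cH_{\pitchfork,1,\alpha,\gamma}=P\oplus Q$, where $P=\mathrm{span}\{1,x,\ldots,x^{\alpha-1}\}$ and $Q=\{f:f^{(k)}(0)=0\text{ for }0\le k\le\alpha-1\}$. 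Orthogonality is immediate, since for $p\in P$ the last term of the inner product vanishes (as $p^{(\alpha)}=0$) while for $f\in Q$ the first two terms vanish; that $P\oplus Q$ exhausts the space is exactly Taylor's theorem with integral remainder. Hence the reproducing kernel is the sum $K_{\pitchfork,\alpha,\gamma}=K_P+K_Q$ of the reproducing kernels of the two summands, and it remains to compute each.

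For the finite-dimensional part I would note that the monomials $\phi_k(x)=x^k$ satisfy $\phi_j^{(k)}(0)=k!\,\delta_{jk}$ for $0\le j,k\le\alpha-1$, so that $\{1\}\cup\{(\sqrt\gamma/k!)\,x^k:1\le k\le\alpha-1\}$ is an orthonormal basis of $P$. Summing $\sum_k\tilde\phi_k(x)\tilde\phi_k(y)$ then yields directly $K_P(x,y)=1+\gamma\sum_{k=1}^{\alpha-1}(xy)^k/(k!)^2$, which is line \eqref{eq:anchored-kernel-finitea} of the claim.

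For the infinite-dimensional part I would use that every $f\in Q$ is recovered from its top derivative by $f(x)=\int_0^x\frac{(x-t)^{\alpha-1}}{(\alpha-1)!}f^{(\alpha)}(t)\rd t$, so that $f\mapsto f^{(\alpha)}$ identifies $Q$ isometrically, up to the factor $1/\gamma$, with $L_2(\RR,\varphi)$. The representer $K_{Q,y}$ of the evaluation functional $f\mapsto f(y)$ on $Q$ is then characterized by demanding that $\tfrac1\gamma K_{Q,y}^{(\alpha)}(t)\varphi(t)$ be the $L_2$-density representing the functional $h\mapsto\int_0^y\frac{(y-t)^{\alpha-1}}{(\alpha-1)!}h(t)\rd t$. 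Solving this gives $K_{Q,y}^{(\alpha)}(t)=\gamma\,\varphi(t)^{-1}G_y(t)$ with an explicit, boundedly supported $G_y$, and one more Taylor integration reconstructs $K_{Q,y}$, producing the integral term. Membership $K_{Q,y}\in Q$ is quick to check: since $G_y$ has compact support and $1/\varphi$ is bounded there, $\int(K_{Q,y}^{(\alpha)})^2\varphi<\infty$.

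I expect the sign bookkeeping to be the only real obstacle. For $y>0$ one writes $\int_0^y(\cdots)=\int_{[0,\infty)}1_{[0,y]}(\cdots)$ and everything is expressed through $(y-t)_+$; but for $y<0$ the orientation of $\int_0^y$ flips the sign, the factor $(y-t)^{\alpha-1}$ contributes a $(-1)^{\alpha-1}$, and after the substitution $t\mapsto -t$ (using that $\varphi$ is even) the two sign factors combine to reproduce exactly the integrand in $(|x|-s)_+$ and $(|y|-s)_+$. The net effect is that the integral term survives precisely when $x$ and $y$ share a sign, encoded by $1_{[0,\infty)}(xy)$, and vanishes when they have opposite signs because then the range of $\int_0^x$ and the support of $G_y$ are disjoint up to a null set. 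Carrying out these four sign cases carefully and checking that they all collapse to the single stated expression is where the care is needed; the remainder is routine Taylor expansion.
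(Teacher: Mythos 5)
Your proof is correct, and since the paper itself omits the proof of Proposition~\ref{pr:rep_anch} (it only points to a similar kernel in the literature), your argument is a complete, self-contained substitute. Your overall strategy --- decompose $\cH_{\pitchfork,1,\alpha,\gamma}$ orthogonally into the polynomials of degree less than $\alpha$ and the subspace $Q$ of functions whose derivatives of order $0,\ldots,\alpha-1$ vanish at the anchor, then add the two kernels via Aronszajn's property (7) --- is exactly the strategy the paper uses for the analogous ANOVA kernel in Theorem~\ref{pr:intrep} (Appendix~\ref{appA}), and the finite-dimensional part via the orthonormal monomial basis is the standard computation. Where you genuinely differ is in the infinite-dimensional summand: the paper's method for the ANOVA analogue builds that kernel by applying a primitive-function operator $\alpha$ times, producing a $(2\alpha-1)$-fold iterated integral that must afterwards be collapsed by a separate lemma and an induction on $\alpha$, whereas you apply Taylor's formula with integral remainder once, identify $f\mapsto f^{(\alpha)}$ as an isometry of $Q$ onto $L_2(\RR,\varphi)$ up to the factor $\gamma$, and obtain the representer of point evaluation in a single Riesz-representation step. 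This is shorter and makes both the form of the integral term and its decomposability (the factor $1_{[0,\infty)}(xy)$, coming from the disjointness of the supports of the two representers' top derivatives, namely the intervals between $0$ and $x$ and between $0$ and $y$) completely transparent. Two details deserve to be made explicit in a write-up. First, for a negative argument $y$ the representer's $\alpha$-th derivative retains a residual factor $(-1)^{\alpha}$: the orientation flip of $\int_0^y$ contributes $-1$ and the power $(y-t)^{\alpha-1}$ contributes $(-1)^{\alpha-1}$, and these do \emph{not} cancel for odd $\alpha$; the residual disappears only because it enters squared in $K_Q(x,y)=\tfrac{1}{\gamma}\int_\RR K_{Q,x}^{(\alpha)}(t)\,K_{Q,y}^{(\alpha)}(t)\,\varphi(t)\rd t$, so your phrase ``the two sign factors combine'' should be justified at the level of the product of the two representers rather than of a single one. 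Second, to invoke Riesz representation on all of $L_2(\RR,\varphi)$ you should record that $h\mapsto\int_0^y\frac{(y-t)^{\alpha-1}}{(\alpha-1)!}h(t)\rd t$ is bounded on $L_2(\RR,\varphi)$ because $1/\varphi$ is bounded on the compact interval between $0$ and $y$, and that the $\alpha$-fold primitive anchored at $0$ of any $h\in L_2(\RR,\varphi)$ lies in $Q$, so the identification of $Q$ with $L_2(\RR,\varphi)$ is indeed onto. With these points spelled out, your four sign cases collapse to the stated formula exactly as you describe.
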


We omit the proof of this formula. For a similar space and kernel  we refer to \cite[Sec.~12.5.1]{NW10}. 

We see from Proposition~\ref{pr:rep_anch} that the part

\begin{equation}\label{prop:decomposable}
L(x,y):= 1_{[0, \infty)}(x \, y) \int_0^{\infty} \frac{1}{\varphi(s)} \, \frac{(|x|-s)_+^{\alpha-1}(|y|-s)_+^{\alpha-1}}{((\alpha-1)!)^2} \rd s
\end{equation}
of the kernel $K_{\pitchfork,\alpha,\gamma}$ is decomposable at 0, meaning that $L(x,y)=0$ whenever $x < 0 < y$ or $y < 0 < x$.

\begin{remark}
In general the anchored space $\cH_{\pitchfork,s,\alpha,\bsgamma}$ is not a Hermite space in the sense of the definition in Section~\ref{intro}. To see this, write (for $s=1$) $\cH_{\pitchfork,\alpha,\bsgamma}=\cH_1+\cH_2$,  where $\cH_1$ is the closed subspace of all polynomials of degree smaller than $\alpha$ and $\cH_2$ is the orthogonal complement of $\cH_1$ in $\cH_{\pitchfork,\alpha,\bsgamma}$.

Using property (7) from \cite[Section~2]{aronszajn50}, we see that $K_{\pitchfork,\alpha,\gamma}=K_1+K_2$, where $K_j$ is a reproducing kernel for $\cH_j$, $j \in \{1,2\}$. Using the representation of $K_{\pitchfork,\alpha,\gamma}$ from Proposition~\ref{pr:rep_anch} gives $K_1(x,y)=1+\gamma \sum_{\ell=1}^{\alpha-1} \frac{(x y)^{\ell}}{(\ell!)^2}$.

Now assume, in order to reach a contradiction, that $\cH_{\pitchfork,3,1}$ {\em is} a Hermite space, and therefore
there exists $R$ with $K_{\pitchfork,3,1}(x,y)=\sum_{k=0}^\infty R(k)H_k(x)H_k(y)$. But then 
$K_1(x,y)=\sum_{k=0}^2 R(k)H_k(x)H_k(y)$, so
\begin{align*}
1+xy+\frac{(xy)^2}{2}
&=K_{1}(x,y)=1+R(1)xy+\frac{1}{2}R(2)(x^2-1)(y^2-1)\\
&=1+\frac{1}{2}R(2)+R(1)xy+\frac{1}{2}R(2)x^2y^2-\frac{1}{2}R(2)x^2-\frac{1}{2}R(2)y^2
\end{align*} 
But now comparing coefficients yields $R(2)=0$ and $R(2)=1$, the desired contradiction.
\end{remark}

\begin{remark}
It is worth noting that, while $\cH_{\pitchfork,s,\alpha,\bsgamma}$ and $\cH_{r_{s,\alpha,\bsgamma}}$ are certainly 
equivalent as Banach spaces,  in general the norm of neither space is dominated by that of the other. 
To see this, let $s=1$, $\alpha=3$, $\gamma=1$,  $a,b\in \RR$ and consider the function $f\colon \RR\to\RR$ with 
$f(x)=a+\frac{b}{2}x^2$. Then $f^{(3)}\equiv 0$, so that $\int_\RR |f^{(3)}(y)|^2\varphi(y)\rd y=0$.
Now
\begin{align*}
\|f\|^2_{\pitchfork,1,3,1}&=(f(0))^2+ (f'(0))^2+ (f''(0))^2=a^2+b^2\\
\|f\|_{r_{1,3,1}}^2 &=\left(\int_\RR \left(a+\frac{b}{2}y^2\right)\varphi(y)\rd y\right)^2+\left(\int_\RR by \varphi(y)\rd y\right)^2+\left(\int_\RR b\varphi(y)\rd y\right)^2\\
&=\left(a+\frac{b}{2}\right)^2+b^2\,.
\end{align*}
Thus, by choosing $a=1$, $b=2$ we get 
\(\|f\|^2_{\pitchfork,1,3,1}=5<8=\|f\|_{r_{1,3,1}}^2\), while by choosing $a=1$, $b=-2$ we find 
\(\|f\|^2_{\pitchfork,1,3,1}=5>4=\|f\|_{r_{1,3,1}}^2\)\,.
\end{remark}

\section{Integration and $L_2$-approximation in Hermite spaces}\label{sec:int_app_general}

We consider integration and $L_2$-approximation for functions from a weighted Hermite space $H_R$ where our main focus will be on $R=r_{s,\alpha,\bsgamma}$. Throughout we assume that $R(\bszero)=1$ and $0< R(\bsh) \le 1$ for all $\bsh \in \NN_0^s$.

\paragraph{The integration problem.} The multivariate integration problem is given by $\INT_R:\cH_R \rightarrow \RR$, 
\begin{equation*}
\INT_R(f) = \int_{\RR^s} f(\bsx) \, \varphi_{s}(\bsx) \rd \bsx = \widehat{f}(\bszero) \quad \text{for} \quad f \in \cH_R. 
\end{equation*}
In order to approximate $\INT_R$ we use linear algorithms of the form 
\begin{equation}\label{def:alglin:int}
A_{n,s}^{{\rm int}}(f):= \sum_{i=1}^n w_i f(\bsx_i)
\end{equation}
with  nodes $\bsx_1,\ldots,\bsx_n \in \RR^s$ and integration weights $w_1,\ldots,w_s \in \RR$. The quality of the algorithm is measured in terms of the worst-case error which is defined by 
$$e^{{\rm int}}(A_{n,s}^{{\rm int}}):=\sup_{\substack{f \in \cH_R \\ \|f\|_R \le 1}}\left|\INT_R(f) -A_{n,s}^{{\rm int}}(f)\right|.$$ The $n$-th minimal error for integration in $\cH_R$ is defined as 
\begin{equation*}
e(n,\INT_R):=\inf_{A_{n,s}^{{\rm int}}} e^{{\rm int}}(A_{n,s}^{{\rm int}}),
\end{equation*}
where the infimum is extended over all linear algorithms of the form \eqref{def:alglin:int} using $n$ function evaluations and integration weights, respectively.

The initial (integration) error is $e^{{\rm int}}(0,\INT_R)=\|\INT_R\|=1$, because
\begin{align*}
	\|\INT_R\|
	=
	\sup_{0 \ne f \in \cH_R} \frac{|\widehat{f}(\bszero)|}{\|f\|_R}
	=
	\sup_{0 \ne f \in \cH_R} \frac{|\widehat{f}(\bszero)|}{\sqrt{\sum_{\bsh \in \NN_0^s} R^{-1}(\bsh) \, |\widehat{f}(\bsh)|^2}}
	\le
	\sup_{0 \ne f \in \cH_R} \frac{|\widehat{f}(\bszero)|}{\sqrt{ |\widehat{f}(\bszero)|^2}}	
	=
	1
\end{align*}
and for $g=1 \in \cH_R$ we have that
\begin{equation*}
\frac{|\widehat{g}(\bszero)|}{\|g\|_R} = \frac{\int_{\RR^s} \varphi_{s}(\bsx) \rd \bsx}{\int_{\RR^s} \varphi_{s}(\bsx) \rd \bsx} = 1.
\end{equation*}

\paragraph{The $L_2$-approximation problem.} The $L_2$-approximation of functions from the Hermite space $\cH_R$ is given by the embedding operator $\APP_R: \cH_R \to L_2(\RR^s, \varphi_s)$ with $$\APP_R (f) = f \quad \mbox{ for }\quad f \in \cH_R.$$  In order to approximate $\APP_R$ with respect to the norm $\|\cdot\|_{L_2}$ we will employ linear algorithms $A_{n,s}^{{\rm app}}$ that use $n$ information evaluations 
and are of the form
\begin{equation} \label{eq:alg_form}
	A_{n,s}^{{\rm app}}(f)
	=
	\sum_{i=1}^n L_i(f) \, g_i
	\quad \text{for }
	f \in \cH_R	
\end{equation}
with functions $g_i \in L_2(\RR^s, \varphi_s)$ and bounded linear functionals $L_i \in \cH^\ast_R$ for $i \in \{1,2,\ldots,n\}$ (see \cite[Theorem~4.8]{NW08} or \cite{TWW}).  If,
for an algorithm $A_{n,s}^{{\rm app}}$ as  in \eqref{eq:alg_form} all $L_i$ are from the same information class 
$\Lambda  \subseteq \cH^\ast_R$, then we simply write with some abuse of notation $A_{n,s}^{{\rm app}}\in \Lambda$.

In this paper we consider two classes of permissible information, namely the class $ \Lambda^{{\rm all}}$ consisting of all continuous linear functionals, i.e., $\Lambda^{{\rm all}}=\cH^\ast_R$, and the class $\Lambda^{{\rm std}}$ consisting exclusively of point evaluation functionals. Since $\cH_R$ is a reproducing kernel Hilbert space it is clear that point evaluation functionals are continuous and hence $\Lambda^{{\rm std}}  \subseteq \Lambda^{{\rm all}}$.

We remark that the embedding operator $\APP_R$ is continuous for all $s \in \NN$, which can be seen as follows.
We have for all $f \in \cH_R$ that
\begin{align*}
\|\APP_R(f)\|_{L_2}^2
	&=
	\|f\|_{L_2}^2 
	= \sum_{\bsh \in \NN_0^s} |\widehat{f}(\bsh)|^2
	\le 
	\sum_{\bsh \in \NN_0^s} \frac{1}{R(\bsh)} |\widehat{f}(\bsh)|^2 
	= 
	\|f\|_{R}^2 < \infty
	,
	\end{align*}
	where we used Parseval's identity and the fact that $0 <  R(\bsh) \le 1$ for all $\bsh \in  \NN_0^s$. By considering the choice $f \equiv 1$, it follows that the above inequality is sharp, such that the operator norm of $\APP_R$ is given by
	\begin{equation*}
	\|\APP_R\| =1.
	\end{equation*}

\begin{remark}\rm
Note that it does not make sense to study $L_\infty$-approximation for the Hermite space $\cH_R$ since this problem is not well defined because $$K_R(\bsx,\bsx)=\sum_{\bsh \in \NN_0^s} R(\bsh) (H_{\bsk}(\bsx))^2 \ge 1+R(\bsone) x_1^2\cdots x_s^2$$ and hence (see \cite[Section~2]{KWW08}) $$\sup_{f \in \cH_R \atop \|f\|_R\le 1}\|f\|_{L_{\infty}} = \esssup_{\bsx \in \RR^s}  \sup_{f \in \cH_R \atop \|f\|_R\le 1}|f(\bsx)| =  \esssup_{\bsx \in \RR^s} \sqrt{K_R(\bsx,\bsx)}=\infty.$$
\end{remark}

The worst-case error of an algorithm $A_{n,s}^{{\rm app}}$ of the form   \eqref{eq:alg_form} is defined by
\begin{equation*}
	e^{{\rm app}}(A_{n,s}^{{\rm app}})
	:=
	\sup_{\substack{f \in \cH_R \\ \|f\|_R \le 1}} \|\APP_R (f) - A_{n,s}^{{\rm app}}(f)\|_{L_2(\RR^s,\varphi_s)}
\end{equation*}
and the $n$-th minimal worst-case error w.r.t.~the information class $\Lambda$ is given by
\begin{equation*}
	e(n,\APP_R;\Lambda)
	:=
	\inf_{A_{n,s}^{{\rm app}} \in \Lambda} e^{{\rm app}}(A_{n,s}^{{\rm app}})
	.
\end{equation*}
Since $\Lambda^{{\rm std}}  \subseteq \Lambda^{{\rm all}}$ it follows that 
\begin{equation}\label{est:err_allstd}
e(n,\APP_R;\Lambda^{{\rm all}}) \le e(n,\APP_R;\Lambda^{{\rm std}}).
\end{equation}

\paragraph{A relation between integration and $L_2$-approximation.}

\begin{proposition}\label{pr:vglintapperr}
For the space $\cH_R$ we have
$$e(n,\INT_R) \le e(n,\APP_R; \Lambda^{{\rm std}}).$$
\end{proposition}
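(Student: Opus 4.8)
The plan is to convert any $L_2$-approximation algorithm that uses standard information into a quadrature rule employing the same nodes, and then to show that its integration error never exceeds the approximation error.

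First I would take an arbitrary algorithm $A_{n,s}^{{\rm app}} \in \Lambda^{{\rm std}}$ of the form \eqref{eq:alg_form}. Since all functionals lie in $\Lambda^{{\rm std}}$, this means $A_{n,s}^{{\rm app}}(f) = \sum_{i=1}^n f(\bsx_i)\, g_i$ for some nodes $\bsx_1,\ldots,\bsx_n \in \RR^s$ and functions $g_1,\ldots,g_n \in L_2(\RR^s,\varphi_s)$. Applying the integration functional $\INT_R$ to the approximant produces
$$\INT_R\big(A_{n,s}^{{\rm app}}(f)\big) = \sum_{i=1}^n f(\bsx_i)\, \INT_R(g_i) = \sum_{i=1}^n w_i\, f(\bsx_i), \qquad w_i := \widehat{g_i}(\bszero),$$
which is precisely a linear quadrature rule $A_{n,s}^{{\rm int}}$ of the admissible form \eqref{def:alglin:int} using the same $n$ nodes.

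The key estimate is that $\INT_R$ is contractive as a functional on $L_2(\RR^s,\varphi_s)$: since $\INT_R(h) = \widehat{h}(\bszero) = \langle h, H_{\bszero}\rangle_{L_2}$ and $\|H_{\bszero}\|_{L_2} = 1$, the Cauchy--Schwarz inequality gives $|\INT_R(h)| \le \|h\|_{L_2}$ for every $h \in L_2(\RR^s,\varphi_s)$. Applying this with $h = f - A_{n,s}^{{\rm app}}(f)$ and using the linearity of $\INT_R$ yields
$$\big|\INT_R(f) - A_{n,s}^{{\rm int}}(f)\big| = \big|\INT_R\big(f - A_{n,s}^{{\rm app}}(f)\big)\big| \le \big\|f - A_{n,s}^{{\rm app}}(f)\big\|_{L_2}.$$
Taking the supremum over the unit ball of $\cH_R$ shows $e^{{\rm int}}(A_{n,s}^{{\rm int}}) \le e^{{\rm app}}(A_{n,s}^{{\rm app}})$, and then taking the infimum over all $A_{n,s}^{{\rm app}} \in \Lambda^{{\rm std}}$ (each of which, via the associated rule $A_{n,s}^{{\rm int}}$, dominates $e(n,\INT_R)$) gives the claim.

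There is essentially no serious obstacle here; the argument is a routine ``approximation implies integration'' comparison. The only point requiring a moment's care is to confirm that the weights $w_i = \widehat{g_i}(\bszero)$ are well-defined real numbers and that the resulting rule genuinely belongs to the admissible class \eqref{def:alglin:int}---both of which are immediate, since each $g_i \in L_2(\RR^s,\varphi_s)$ has a finite zeroth Hermite coefficient.
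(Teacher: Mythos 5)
Your proposal is correct and follows essentially the same route as the paper: both convert the sampling-based approximation algorithm into the quadrature rule with weights $w_i=\int_{\RR^s} g_i(\bsx)\varphi_s(\bsx)\rd\bsx$ and bound the integration error by the $L_2$-error via the zeroth Hermite coefficient (your Cauchy--Schwarz step against $H_{\bszero}$ is the same estimate the paper obtains by Parseval's identity after discarding all terms with $\bsk\neq\bszero$).
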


\begin{proof}
Recall that $\|\INT_R\|=1$. Using Parseval's identity, we have for any algorithm of the form $A_{n,s}^{{\rm app}}(f)=\sum_{i=1}^n g_i f(\bsx_i)$ with $\bsx_i \in \RR^s$ and $g_i \in L_2(\RR^s, \varphi_s)$ for $i \in \{1,2,\ldots,n\}$ that
\begin{equation*}
\|\APP_R (f) - A_{n,s}^{{\rm app}}(f)\|_{L_2(\RR^s,\varphi_s)}^2 = \sum_{\bsk\in\NN_0^s} \left| \widehat{f}(\bsk) - \widehat{A_{n,s}^{{\rm app}}(f)}(\bsk) \right|^2,
\end{equation*}
where $\widehat{A_{n,s}^{{\rm app}}(f)}(\bsk)$ is the $\bsk$-th Hermite coefficient given by
\begin{equation*}
\widehat{A_{n,s}^{{\rm app}}(f)}(\bsk) = \sum_{i=1}^n f(\bsx_i) \int_{\RR^s} g_i(\bsx) H_{\bsk}(\bsx) \varphi_s(\bsx) \rd \bsx.
\end{equation*}
This gives
\begin{align*}
\|\APP_R (f) - A_{n,s}^{{\rm app}}(f)\|_{L_2(\RR^s,\varphi_s)}^2
&=\sum_{\bsk\in\NN_0^s} \left| \widehat{f}(\bsk) - \widehat{A_{n,s}^{{\rm app}}(f)}(\bsk) \right|^2\\
&\ge \left| \widehat{f}(\bszero) - \widehat{A_{n,s}^{{\rm app}}(f)}(\bszero) \right|^2 \\
&= \left| \INT_R(f) - \sum_{i=1}^n f(\bsx_i) \int_{\RR^s} g_i(\bsx) \varphi_s(\bsx) \rd \bsx \right|^2\\
&=\left| \INT_R(f)  - \sum_{i=1}^n w_i f(\bsx_i) \right|^2\\
& = \left| \INT_R(f)  - A_{n,s}^{{\rm int}}(f) \right|^2
\end{align*}
where $$w_i:= \int_{\RR^s} g_i(\bsx) \varphi_s(\bsx) \rd \bsx \quad\mbox{for $i \in \{1,2,\ldots,n\}$}$$ and $$A_{n,s}^{{\rm int}}(f):= \sum_{i=1}^n w_i f(\bsx_i).$$ Thus, for every linear approximation algorithm $A_{n,s}^{{\rm app}}$ we can find a linear integration algorithm $A_{n,s}^{{\rm int}}$ such that $$e^{{\rm int}}(A_{n,s}^{{\rm int}}) \le e^{{\rm app}}(A_{n,s}^{{\rm app}}).$$ From this we conclude that $$e(n,\INT_R) \le e(n,\APP_R; \Lambda^{{\rm std}}).$$ 
\end{proof}

The next proposition provides some relations between worst-case errors for different but related Hermite spaces. 

\begin{proposition}\label{pr:embb:err}
Let $R_1,R_2: \NN_0^s \rightarrow \RR$ be two Fourier weights for Hermite spaces $\cH_{R_1}$ and $\cH_{R_2}$ such that for the corresponding norms we have $$\|f\|_{R_1} \le \|f\|_{R_2} \quad \mbox{for all $f \in \cH_{R_2}$.}$$ Then for all $n \in \NN$ we have 
$$e(n,\INT_{R_2}) \le e(n,\INT_{R_1})$$ and $$e(n,\APP_{R_2};\Lambda) \le e(n,\APP_{R_1};\Lambda)\quad \mbox{for $\Lambda \in \{\Lambda^{{\rm all}},\Lambda^{{\rm std}}\}$.}$$
\end{proposition}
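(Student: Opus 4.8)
The plan is to exploit the inequality $\|f\|_{R_1}\le\|f\|_{R_2}$ in its equivalent Hermite-coefficient form. Since every Hermite space norm is $\|f\|_{R}^2=\sum_{\bsk\in\NN_0^s}R(\bsk)^{-1}|\widehat f(\bsk)|^2$, the hypothesis $\|f\|_{R_1}\le\|f\|_{R_2}$ for all $f\in\cH_{R_2}$ says that the embedding $\cH_{R_2}\hookrightarrow\cH_{R_1}$ has operator norm at most $1$. In particular, the unit ball $B_{R_2}:=\{f:\|f\|_{R_2}\le1\}$ is contained in the unit ball $B_{R_1}:=\{f:\|f\|_{R_1}\le1\}$. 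This single containment is the engine behind both error inequalities, and I expect it to be the only genuinely substantive observation; everything else is bookkeeping.

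For the integration claim I would proceed as follows. Fix $n\in\NN$ and let $A_{n,s}^{\mathrm{int}}$ be an arbitrary linear integration algorithm of the form \eqref{def:alglin:int}. Since $\INT_{R_1}(f)=\widehat f(\bszero)=\INT_{R_2}(f)$ and the algorithm $A_{n,s}^{\mathrm{int}}$ uses only function values, the error functional $f\mapsto\INT(f)-A_{n,s}^{\mathrm{int}}(f)$ is the same linear functional for both spaces; only the unit ball over which we take the supremum changes. Because $B_{R_2}\subseteq B_{R_1}$, taking the supremum over the smaller ball gives
\begin{equation*}
e^{\mathrm{int}}_{R_2}(A_{n,s}^{\mathrm{int}})
=\sup_{f\in B_{R_2}}|\INT(f)-A_{n,s}^{\mathrm{int}}(f)|
\le\sup_{f\in B_{R_1}}|\INT(f)-A_{n,s}^{\mathrm{int}}(f)|
=e^{\mathrm{int}}_{R_1}(A_{n,s}^{\mathrm{int}}).
\end{equation*}
Taking the infimum over all admissible $A_{n,s}^{\mathrm{int}}$ on both sides yields $e(n,\INT_{R_2})\le e(n,\INT_{R_1})$, as desired. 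The key point is that the \emph{same} family of algorithms is admissible for both problems, so the infimum is over one common index set.

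The approximation claim follows by the identical mechanism, with one extra point of care. Fix $n$ and an information class $\Lambda\in\{\Lambda^{\mathrm{all}},\Lambda^{\mathrm{std}}\}$, and let $A_{n,s}^{\mathrm{app}}\in\Lambda$ be as in \eqref{eq:alg_form}. The error of approximation is measured in the \emph{same} target norm $\|\cdot\|_{L_2(\RR^s,\varphi_s)}$ regardless of which Hermite space we start from, and the algorithm $A_{n,s}^{\mathrm{app}}$ produces the same output $\APP(f)-A_{n,s}^{\mathrm{app}}(f)$ for a given input $f$. Thus again only the unit ball in the supremum changes, and $B_{R_2}\subseteq B_{R_1}$ gives $e^{\mathrm{app}}_{R_2}(A_{n,s}^{\mathrm{app}})\le e^{\mathrm{app}}_{R_1}(A_{n,s}^{\mathrm{app}})$. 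The single subtlety is whether an algorithm admissible in class $\Lambda$ over $\cH_{R_1}$ is also admissible over $\cH_{R_2}$: for $\Lambda^{\mathrm{std}}$ this is immediate, since point evaluations are continuous on both reproducing kernel Hilbert spaces; for $\Lambda^{\mathrm{all}}$ one uses the embedding $\cH_{R_2}\hookrightarrow\cH_{R_1}$ to see that any $L_i\in\cH_{R_1}^\ast$ restricts to an element of $\cH_{R_2}^\ast$, so the class of algorithms available over $\cH_{R_1}$ is contained in that available over $\cH_{R_2}$. Taking infima over $A_{n,s}^{\mathrm{app}}\in\Lambda$ then yields $e(n,\APP_{R_2};\Lambda)\le e(n,\APP_{R_1};\Lambda)$.

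I anticipate no serious obstacle: the whole argument is a monotonicity principle stating that shrinking the unit ball can only decrease a worst-case error, combined with the fact that the admissible algorithm sets are nested in the favorable direction. The only place demanding a sentence of justification rather than a mere assertion is the admissibility-of-algorithms point for $\Lambda^{\mathrm{all}}$, which is why I would record the embedding of dual spaces explicitly. Everything else reduces to rewriting a supremum over a smaller set.
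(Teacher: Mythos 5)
Your proof is correct and is exactly the standard monotonicity argument the paper has in mind (it omits the proof, referring to \cite[Proposition~7.5]{DKP22}): the unit-ball containment $B_{R_2}\subseteq B_{R_1}$ plus the nesting of admissible algorithm classes, with your explicit remark on restricting functionals in $\Lambda^{\rm all}$ via the embedding being the right point to make precise. Nothing further is needed.
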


We omit the easy proof of this result and refer to \cite[Proposition~7.5]{DKP22}.

\begin{remark}
Under our assumption that $R_1$ and $R_2$ vanish nowhere on $\NN_0^s$,
it is not hard to check that, 
\[
\|f\|_{R_1} \le \|f\|_{R_2} \Leftrightarrow \big[R_1(\bsk)\ge R_2(\bsk) \text{ for all } \bsk\in \NN_0^s\big].
\]
\end{remark}
\paragraph{Tractability.} We are interested in how the worst-case errors of algorithms $A_{n,s}^{\bullet}$, $\bullet \in \{{\rm int},{\rm app}\}$, depend on the number $n$ of information evaluations used and on the problem dimension~$s$. To this end, we define the so-called information complexity as
\begin{equation*}
	n(\varepsilon,S;\Lambda)
	:=
	\min\{n \in \NN_0 \ : \  e(n,S;\Lambda) \le \varepsilon \}
\end{equation*}
where $S \in \{\INT_R,\APP_R\}$, with $\varepsilon \in (0,1)$ and $s \in \NN$. Note that here we do not need to distinguish between the absolute and the normalized error criterion since in the present case the related initial errors equal 1. If $S=\INT_R$ it obviously makes only sense to consider the class $\Lambda^{{\rm std}}$ and hence we just write $n(\varepsilon,\INT_R)$. 

Obviously, \eqref{est:err_allstd} implies that 
\begin{equation}\label{est:icomp_allstd}
n(\varepsilon,\APP_R;\Lambda^{{\rm all}}) \le n(\varepsilon,\APP_R;\Lambda^{{\rm std}})
\end{equation}
and Proposition~\ref{pr:vglintapperr} implies 
\begin{equation}\label{vgl:n:int:app}
n(\varepsilon, \INT_R) \le n(\varepsilon,\APP_R;\Lambda^{{\rm std}}).
\end{equation}

 In order to characterize the dependency of the information complexity on the dimension $s$ and the error threshold $\varepsilon$, we will study several notions of tractability which are given in the following definition.

\begin{definition}
Let $S \in \{\INT_R,\APP_R\}$.  We say we have:
	\begin{enumerate}[label=\rm{(\alph*)}]
		\item Polynomial tractability (PT) if there exist non-negative numbers $\tau, \sigma, C$ such that
		\begin{equation*}
			n(\varepsilon,S;\Lambda)
			\le
			C \, \varepsilon^{-\tau} s^\sigma
			\quad \text{for all} \quad
			s \in \NN, \varepsilon \in (0,1)
			.
		\end{equation*}
		\item Strong polynomial tractability (SPT) if there exist non-negative numbers $\tau, C$ such that
		\begin{equation*}
			n(\varepsilon,S;\Lambda)
			\le
			C \, \varepsilon^{-\tau}
			\quad \text{for all} \quad
			s \in \NN, \varepsilon \in (0,1)
			.
		\end{equation*}
		In that case we define the {\em exponent of SPT} as
		\begin{equation*}
		\inf\{\tau \colon \exists C>0 \text{ such that } n(\varepsilon,S;\Lambda)
			\le
			C \, \varepsilon^{-\tau}\ 
			 \forall  
			s \in \NN, \varepsilon \in (0,1)\}.
		\end{equation*}
		\item Weak tractability (WT) if
		\begin{equation*}
			\lim_{s + \varepsilon^{-1}\rightarrow \infty} \frac{\log n(\varepsilon,S;\Lambda)}{s + \varepsilon^{-1}}
			=
			0
			.
		\end{equation*}
		\item Quasi-polynomial tractability (QPT) if there exist non-negative numbers $\tau, C$ such that
		\begin{equation*}
			n(\varepsilon,S;\Lambda)
			\le
			C \, \exp(\tau (1 + \log s) (1 + \log \varepsilon^{-1}))
			\quad \text{for all} \quad
			s \in \NN, \varepsilon \in (0,1)
			.
		\end{equation*} 
		In that case we define the {\em exponent of QPT} as
		\begin{equation*}
		\inf\{\tau \colon \exists C>0 \text{ such that } n(\varepsilon,S;\Lambda)
			\le
			C \, \exp(\tau (1 + \log s) (1 + \log \varepsilon^{-1})) \
			 \forall  
			s \in \NN, \varepsilon \in (0,1)\}.
		\end{equation*}
		\item $(\sigma,\tau)$-weak tractability ($(\sigma,\tau)$-WT) if there exist positive $\sigma,\tau$ such that
		\begin{equation*}
			\lim_{s + \varepsilon^{-1}} \frac{\log n(\varepsilon,S;\Lambda)}{s^\sigma + \varepsilon^{-\tau}}
			=
			0
			.
		\end{equation*}
		\item Uniform weak tractability (UWT) if $(\sigma,\tau)$-weak tractability holds for all $\sigma,\tau \in (0,1]$.    
	\end{enumerate} 
\end{definition}

\section{$L_2$-approximation in weighted Hermite spaces}\label{sec:app}

In this section we present results about tractability of $L_2$-approximation for Hermite spaces $\cH_R$ with Fourier weights $R \in \{r_{s,\alpha,\bsgamma},\rho_{s,\alpha,\bsgamma},\psi_{s,\alpha,\bsgamma}\}$. From our examples in Section~\ref{Hspaces} we mainly concentrate on the most comprising weighted Gaussian ANOVA space $\cH_{r_{s,\alpha,\bsgamma}}$ from Section~\ref{sec:ANOVA}. Via embedding we then can derive corresponding results also for the other cases. Throughout we consider the smoothness parameter $\alpha$ and the weights $\bsgamma$ as fixed. With this in mind, we often simplify the notation by just writing $\APP_s$ instead of $\APP_R$ or $\APP_{r_{s,\alpha,\bsgamma}}$.

\subsection{Tractability for the class $\Lambda^{\text{all}}$}

In order to characterize tractability properties of the approximation problem we introduce the following figures: For a weight sequence $\bsgamma=(\gamma_j)_{j \ge 1}$ we will use the infimum $$\bsgamma_I:=\inf_{j \ge 1} \gamma_j$$ and  the so-called sum exponent 
\begin{equation} \label{def_sgamma} 
s_{\bsgamma}:=\inf\left\{\kappa>0 \ : \ \sum_{j=1}^{\infty} \gamma_j^{\kappa} < \infty \right\},
\end{equation}
with the convention that $\inf \emptyset:=\infty$.

First we state the exact ``if and only if'' characterization for tractability of $L_2$-approximation in $\cH_{r_{s,\alpha,\bsgamma}}$ for the information class $\Lambda^{{\rm all}}$.

\begin{theorem} \label{thm:main}
Let $\alpha\ge 1$ and $\bsgamma$ be a sequence of weights. Consider the $L_2$-approximation problem $\APP=(\APP_s)_{s \ge 1}$ for the weighted Hermite spaces $\cH_{r_{s,\alpha,\bsgamma}}$ for $s \in \NN$ and for the information class $\Lambda^{{\rm all}}$. Then we have the following exact characterizations of tractability:
\begin{enumerate}
\item SPT  holds if and only if $s_{\bsgamma}< \infty$. In this case the exponent of SPT is 
\begin{equation*}
\tau^{\ast}(\Lambda^{\mathrm{all}})	= 2 \max\left(\frac{1}{\alpha},s_{\bsgamma}\right) .
\end{equation*}
\item SPT and PT  are equivalent.
\item QPT, UWT and WT are equivalent and hold if and only if $\bsgamma_I <1$. In this case the exponent of QPT is 

\begin{equation*}
t^{\ast}(\Lambda^{{\rm all}}) = 
\begin{cases}
2 \max\left(\frac{1}{\alpha} , \frac{1}{\ln \bsgamma_I^{-1}}\right) & \text{ if }\bsgamma_I \ne 0,\\
2\frac{1}{\alpha}& \text{ if }\bsgamma_I = 0.
\end{cases}
\end{equation*}

\item For $\sigma>1$, $(\sigma,\tau)$-WT holds for all weights $1 \ge \gamma_1 \ge \gamma_2 \ge \ldots > 0$. 
\end{enumerate}
\end{theorem}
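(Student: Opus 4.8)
The plan is to reduce the whole theorem to the distribution of the squared singular values of $\APP_s$ and then to read off each tractability notion from a single counting function. Since $(H_{\bsk})_{\bsk\in\NN_0^s}$ is an orthonormal basis of $L_2(\RR^s,\varphi_s)$ and $(\sqrt{r_{s,\alpha,\bsgamma}(\bsk)}\,H_{\bsk})_{\bsk}$ is one of $\cH_{r_{s,\alpha,\bsgamma}}$, the operator $\APP_s$ is diagonalised simultaneously in these bases, its squared singular values being exactly the numbers $r_{s,\alpha,\bsgamma}(\bsk)$. As the initial error is $1$, the standard theory for $\Lambda^{\mathrm{all}}$ (see \cite{NW08}) gives the $n$-th minimal error as the $(n+1)$-st largest such value, and hence
\[
n(\varepsilon,\APP_s;\Lambda^{\mathrm{all}})=\big|\{\bsk\in\NN_0^s:\ r_{s,\alpha,\bsgamma}(\bsk)>\varepsilon^2\}\big|.
\]
The key algebraic observation is that $r_{\alpha,\gamma}(k)=\gamma\,r_{\alpha,1}(k)$ for every $k\ge1$, so that, setting $W(\tau):=\sum_{k\ge1}r_{\alpha,1}(k)^\tau$, the product structure of $r_{s,\alpha,\bsgamma}$ yields for all $\tau>0$
\[
\sum_{\bsk\in\NN_0^s}r_{s,\alpha,\bsgamma}(\bsk)^\tau=\prod_{j=1}^s\big(1+\gamma_j^\tau\,W(\tau)\big).
\]
By Lemma~\ref{le:bdrk} one has $1/k^\alpha\le r_{\alpha,1}(k)\le(\alpha/k)^\alpha$, so $W(\tau)<\infty$ if and only if $\tau>1/\alpha$, and $W(\tau)\to1$ as $\tau\to\infty$.

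Everything now rests on two elementary estimates tying $n(\varepsilon,\APP_s)$ to this $\tau$-sum. Markov's inequality gives the upper bound $n(\varepsilon,\APP_s)\le\varepsilon^{-2\tau}\sum_{\bsk}r_{s,\alpha,\bsgamma}(\bsk)^\tau$; conversely, writing $N_s(t):=|\{\bsk:r_{s,\alpha,\bsgamma}(\bsk)>t\}|$ and using the layer-cake identity $\sum_{\bsk}r_{s,\alpha,\bsgamma}(\bsk)^\tau=\int_0^1\tau t^{\tau-1}N_s(t)\,\rd t$ converts a bound $n(\varepsilon,\APP_s)\le C\varepsilon^{-q}$ (via $t=\varepsilon^2$) into $\sup_s\sum_{\bsk}r_{s,\alpha,\bsgamma}(\bsk)^\tau<\infty$ for every $\tau>q/2$. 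For part~(1) I read off from the factorisation that $\sup_s\sum_{\bsk}r_{s,\alpha,\bsgamma}(\bsk)^\tau<\infty$ precisely when $\tau>1/\alpha$ (so that $W(\tau)<\infty$) and $\prod_j(1+\gamma_j^\tau W(\tau))$ converges, i.e.\ $\sum_j\gamma_j^\tau<\infty$, i.e.\ $\tau>s_{\bsgamma}$. Thus the critical value is $\max(1/\alpha,s_{\bsgamma})$, it is finite iff $s_{\bsgamma}<\infty$ (the SPT criterion), and the Markov upper bound together with the layer-cake lower bound pins the exponent of SPT down to $2\max(1/\alpha,s_{\bsgamma})$. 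For part~(2) only PT $\Rightarrow$ SPT needs work: from $n(\varepsilon,\APP_s)\le C\varepsilon^{-p}s^\sigma$ the layer-cake estimate gives $\sum_{\bsk}r_{s,\alpha,\bsgamma}(\bsk)^\tau\le C's^\sigma$ for any fixed $\tau>\max(1/\alpha,p/2)$, hence $\prod_{j=1}^s(1+\gamma_j^\tau W(\tau))\le C's^\sigma$ and, taking logarithms and using $\log(1+x)\ge c_\tau x$ on the bounded range $x\in[0,W(\tau)]$, $\sum_{j=1}^s\gamma_j^\tau\le C''(1+\log s)$. The decisive step is the monotonicity of the weights: since $\gamma_1\ge\cdots\ge\gamma_s$ we get $s\,\gamma_s^\tau\le\sum_{j=1}^s\gamma_j^\tau\le C''(1+\log s)$, so $\gamma_s\le(C''(1+\log s)/s)^{1/\tau}$, which makes $\sum_j\gamma_j^\kappa$ converge for every $\kappa>\tau$. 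Thus $s_{\bsgamma}\le\tau<\infty$ and SPT follows.

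Part~(3) is where the controlling mechanism changes from summability to the number of coordinates that can be simultaneously active. For necessity, if $\bsgamma_I=1$ then by monotonicity all $\gamma_j=1$; since $r_{\alpha,1}(1)=1$, every $\bsk\in\{0,1\}^s$ has $r_{s,\alpha,\bsgamma}(\bsk)=1$, so $n(\varepsilon,\APP_s)\ge2^s$ and $\log n(\varepsilon,\APP_s)/(s+\varepsilon^{-1})\to\log2>0$ as $s\to\infty$ at fixed $\varepsilon$, whence WT (and a fortiori UWT and QPT) fail. For sufficiency, assume $\bsgamma_I<1$, fix $g\in(\bsgamma_I,1)$ and $j_1$ with $\gamma_j\le g$ for all $j\ge j_1$, and factor the Markov bound as
\[
n(\varepsilon,\APP_s)\le\varepsilon^{-2\tau}\,(1+W(\tau))^{\,j_1-1}\,\big(1+g^\tau W(\tau)\big)^{s}.
\]
Choosing $\tau\asymp\log s/\log(1/g)$ keeps the last factor bounded (then $s\,g^\tau W(\tau)=O(1)$ and $W(\tau)\to1$), giving $\log n(\varepsilon,\APP_s)\le\frac{2}{\log(1/g)}(1+\log s)\log(1/\varepsilon)+O(1)$, which is QPT; letting $g\downarrow\bsgamma_I$ drives the coefficient to $2/\log\bsgamma_I^{-1}$. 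The matching lower bound for the exponent has two sources: the one-dimensional problem forces $n(\varepsilon,\APP_1)\ge|\{k:r_{\alpha,\gamma_1}(k)>\varepsilon^2\}|\gtrsim\varepsilon^{-2/\alpha}$, so the exponent is $\ge2/\alpha$; and counting the subsets $\mathfrak u\subseteq[s]$ of size at most $m\approx2\log\varepsilon^{-1}/\log\bsgamma_I^{-1}$, for which $\prod_{j\in\mathfrak u}\gamma_j\ge\bsgamma_I^{m}>\varepsilon^2$, yields $n(\varepsilon,\APP_s)\ge\binom{s}{m}$ and hence exponent $\ge2/\log\bsgamma_I^{-1}$. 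Together these give the stated $2\max(1/\alpha,1/\log\bsgamma_I^{-1})$, degenerating to $2/\alpha$ when $\bsgamma_I=0$ (where $g\downarrow0$ removes the $s$-contribution).

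Finally, part~(4) follows from a crude bound valid for all weights: fixing any $\beta>1/\alpha$ and using $\gamma_j\le1$, Markov gives $n(\varepsilon,\APP_s)\le\varepsilon^{-2\beta}(1+W(\beta))^s$, so $\log n(\varepsilon,\APP_s)\le2\beta\log(1/\varepsilon)+s\log(1+W(\beta))$. Dividing by $s^\sigma+\varepsilon^{-\tau}$ with $\sigma>1$ and any $\tau>0$, the ratio tends to $0$ as $s+\varepsilon^{-1}\to\infty$, since $s=o(s^\sigma)$ and $\log(1/\varepsilon)=o(\varepsilon^{-\tau})$; this establishes $(\sigma,\tau)$-WT (and pinpoints why $\sigma>1$ is exactly what is needed to beat the linear-in-$s$ numerator). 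I expect the principal obstacles to be the two matching lower bounds that identify the exponents exactly---the layer-cake lower bound for the SPT exponent and, above all, the subset-counting lower bound underlying the QPT exponent---together with the $s$-dependent optimisation of $\tau$ in the QPT upper bound, where the gain $(1+g^\tau W(\tau))^s=O(1)$ must be balanced against the loss $\varepsilon^{-2\tau}$; the use of the monotonicity of $\bsgamma$ in part~(2) is the one further genuinely non-formal ingredient.
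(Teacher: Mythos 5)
Your proposal is correct, and its skeleton coincides with the paper's: identify $n(\varepsilon,\APP_s;\Lambda^{\rm all})$ with the cardinality of $\{\bsk: r_{s,\alpha,\bsgamma}(\bsk)>\varepsilon^2\}$ via the eigenpairs of $W_s=\APP_s^\ast\APP_s$, then exploit the product factorisation $\sum_{\bsk}r_{s,\alpha,\bsgamma}(\bsk)^\tau=\prod_j(1+\gamma_j^\tau W(\tau))$. The differences lie in how several key steps are executed. Where the paper invokes the black-box criteria of Novak--Wo\'zniakowski (Theorem~5.2 of \cite{NW08} for SPT, Section~23.1.1 of \cite{NW12} for QPT) and proves the counting bound of Lemma~\ref{le:est-a-zeta} by induction on $s$, you re-derive everything from the Markov inequality $N_s(\varepsilon^2)\le\varepsilon^{-2\tau}\sum_{\bsk}r(\bsk)^\tau$ and its layer-cake converse, which is more self-contained and replaces the induction by a one-line estimate. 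For PT $\Rightarrow$ SPT the two arguments genuinely diverge: the paper evaluates the counting function at $\varepsilon_\ast=\tfrac12\gamma_s^{(q+1)/2}$ and counts the $\binom{s}{q+1}$ zero-one vectors in $B_s$, while you extract $\sum_{j\le s}\gamma_j^\tau=O(\log s)$ from the product bound and then use monotonicity of the weights to get $\gamma_s=O((\log s/s)^{1/\tau})$; both hinge on monotonicity and both are valid. For the QPT exponent lower bound you count subsets $\uu$ with $|\uu|\approx 2\log\varepsilon^{-1}/\log\bsgamma_I^{-1}$ and use $n\ge\binom{s}{m}$, a combinatorial route, whereas the paper works analytically inside the criterion \eqref{condQPT}; again both succeed. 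Your QPT upper bound via an $s$-dependent choice $\tau(s)=\max(\tau_0,\log s/\log(1/g))$ in the Markov bound is essentially the same mechanism as the paper's $\tau(1+\log s)$ powers, and part~4 is identical. The only places where you are terse rather than wrong are (i) the QPT upper bound, where the displayed estimate omits the $2\tau_0\log(1/\varepsilon)$ term that supplies the $1/\alpha$ branch of the maximum (you recover it in the final statement), and (ii) the implicit use of the standard chain QPT $\Rightarrow$ UWT $\Rightarrow$ WT to close the equivalence in part~3; neither affects correctness.
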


From this theorem we can derive the following consequences.

\begin{corollary}\label{cor:app:R:std}
Let $\alpha\ge 1$ and $\bsgamma$ be a sequence of weights. Consider the $L_2$-approximation problem $\APP=(\APP_s)_{s \ge 1}$ for the weighted Hermite spaces $\cH_{R}$, $R \in \{\rho_{s,\alpha,\bsgamma},\psi_{s,\alpha,\bsgamma}\}$ for $s \in \NN$ and for the information class $\Lambda^{{\rm all}}$. Then we have:
\begin{enumerate}
\item SPT  holds if and only if $s_{\bsgamma}< \infty$. In this case the exponent of SPT is 
\begin{equation*}
\tau^{\ast}(\Lambda^{\mathrm{all}})	= 2 \max\left(\frac{1}{\alpha},s_{\bsgamma}\right) .
\end{equation*}
\item SPT and PT  are equivalent.
\item If $\bsgamma_I< \infty$, then we have QPT. In this case the exponent of QPT is 

\begin{equation*}
t^{\ast}(\Lambda^{{\rm all}}) = 
\begin{cases}
2 \max\left(\frac{1}{\alpha} , \frac{1}{\ln \bsgamma_I^{-1}}\right) & \text{ if }\bsgamma_I \ne 0,\\
2\frac{1}{\alpha}& \text{ if }\bsgamma_I \ne 0.
\end{cases}
\end{equation*}

\item For $\sigma>1$, $(\sigma,\tau)$-WT holds for all weights $1 \ge \gamma_1 \ge \gamma_2 \ge \ldots > 0$. 
\end{enumerate}
\end{corollary}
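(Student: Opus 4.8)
The plan is to deduce every assertion from Theorem~\ref{thm:main} by a sandwiching argument, combining the norm comparisons of Propositions~\ref{pr:r_rho} and \ref{pr:r_psi} with the monotonicity of the minimal errors from Proposition~\ref{pr:embb:err}. Write $R\in\{\rho_{s,\alpha,\bsgamma},\psi_{s,\alpha,\bsgamma}\}$ and set $c=\alpha^{\alpha}$ in the first case and $c=2\alpha^{\alpha}$ in the second (in both cases $c\ge 1$, so $\bsgamma/c$ is again an admissible weight sequence). Propositions~\ref{pr:r_rho} and \ref{pr:r_psi} then read $\|f\|_{r_{s,\alpha,\bsgamma}}\le\|f\|_{R}\le\|f\|_{r_{s,\alpha,\bsgamma/c}}$, and Proposition~\ref{pr:embb:err} turns this into
\[
n(\varepsilon,\APP_{r_{s,\alpha,\bsgamma/c}};\Lambda^{\mathrm{all}})\le n(\varepsilon,\APP_{R};\Lambda^{\mathrm{all}})\le n(\varepsilon,\APP_{r_{s,\alpha,\bsgamma}};\Lambda^{\mathrm{all}})
\]
for all $\varepsilon\in(0,1)$ and $s\in\NN$. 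The approximation problem for $\cH_R$ is thereby squeezed between two Gaussian ANOVA problems already resolved in Theorem~\ref{thm:main}, one with the original weights $\bsgamma$ and one with the uniformly rescaled weights $\bsgamma/c$.

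The key observation for the strong polynomial regime is that the sum exponent is invariant under a constant rescaling: since $\sum_{j\ge1}(\gamma_j/c)^{\kappa}=c^{-\kappa}\sum_{j\ge1}\gamma_j^{\kappa}$, both series converge for exactly the same $\kappa>0$, whence $s_{\bsgamma/c}=s_{\bsgamma}$. Feeding Theorem~\ref{thm:main}(1) into both ends of the sandwich, the right-hand inequality shows that $\cH_R$ has SPT as soon as $s_{\bsgamma}<\infty$ with exponent at most $2\max(1/\alpha,s_{\bsgamma})$, while the left-hand inequality shows that SPT of $\cH_R$ forces SPT of the rescaled $r$-space, hence $s_{\bsgamma}=s_{\bsgamma/c}<\infty$, and that its exponent is no smaller than $2\max(1/\alpha,s_{\bsgamma/c})=2\max(1/\alpha,s_{\bsgamma})$. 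The two bounds coincide, which gives item~(1) together with the exact exponent. Item~(2) is then inherited from the corresponding equivalence for the bounding $r$-spaces in Theorem~\ref{thm:main}(2), and item~(4) transfers verbatim, since by Theorem~\ref{thm:main}(4) both bounding spaces enjoy $(\sigma,\tau)$-WT for $\sigma>1$ and all admissible weights, so the upper bound in the display does as well.

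For item~(3) the same sandwich immediately yields the claimed sufficient direction: if $\bsgamma_I<1$, then Theorem~\ref{thm:main}(3) applied to the right-hand space $\cH_{r_{s,\alpha,\bsgamma}}$ gives QPT for $\cH_R$ with QPT-exponent at most $2\max(1/\alpha,1/\ln\bsgamma_I^{-1})$ (read as $2/\alpha$ when $\bsgamma_I=0$). I expect the matching lower bound for the QPT-exponent to be the main obstacle. In contrast to $s_{\bsgamma}$, the quantity $\ln\bsgamma_I^{-1}$ governing the QPT-exponent is \emph{not} rescaling-invariant: the left-hand space has $(\bsgamma/c)_I=\bsgamma_I/c$, so Theorem~\ref{thm:main}(3) only returns the smaller value $2\max(1/\alpha,1/(\ln c+\ln\bsgamma_I^{-1}))$, and the sandwich therefore pins the exponent of $\cH_R$ merely to the interval between these two numbers. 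They agree when $\bsgamma_I=0$ or $\bsgamma_I\le e^{-\alpha}$ (both equal $2/\alpha$), but differ once $e^{-\alpha}<\bsgamma_I<1$. The underlying reason is that the per-coordinate embedding constant in Propositions~\ref{pr:r_rho}/\ref{pr:r_psi} enters with the power $|\uu(\bsk)|$ and hence compounds over the active coordinates, distorting precisely the high-dimensional part of the spectrum that controls QPT. To recover the exact exponent one would bypass the crude sandwich and rerun the spectral (eigenvalue-counting) analysis behind Theorem~\ref{thm:main} directly for $\rho_{s,\alpha,\bsgamma}$ and $\psi_{s,\alpha,\bsgamma}$, using that these share with $r_{s,\alpha,\bsgamma}$ both the value $\gamma$ at $k=1$ and the same limiting weight $\bsgamma_I$, which are the only data the QPT-exponent depends on.
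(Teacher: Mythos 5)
Your proposal is correct and follows essentially the same route as the paper: the two-sided norm comparison from Propositions~\ref{pr:r_rho} and \ref{pr:r_psi} combined with Proposition~\ref{pr:embb:err} to sandwich $n(\varepsilon,\APP_R;\Lambda^{\rm all})$ between the two Gaussian ANOVA problems, the rescaling-invariance $s_{\bsgamma/c}=s_{\bsgamma}$ for the (S)PT statements, and the one-sided embedding for the remaining sufficient conditions. The difficulty you flag in item~3 --- that the sandwich only localizes the QPT exponent in an interval because $\bsgamma_I$ is not invariant under the rescaling $\bsgamma\mapsto\bsgamma/c$ --- is a genuine observation, and the paper's own proof does not resolve it either: it derives only the sufficient conditions from the upper embedding and the necessity of $s_{\bsgamma}<\infty$ from the lower one, leaving the claimed exact value of $t^{\ast}(\Lambda^{\rm all})$ without a matching lower bound, so you have not missed an argument that the paper actually supplies.
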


We start with some preparation for the proof of Theorem~\ref{thm:main}.

It is commonly known that the $n$-th minimal worst-case errors $e(n,\APP_s;\Lambda^{{\rm all}})$ are directly related to the eigenvalues of the self-adjoint operator 
\begin{equation}\label{def:Ws}
	W_s 
	:= 
	\APP_s^\ast \APP_s: \cH_{r_{s,\alpha,\bsgamma}} \to \cH_{r_{s,\alpha,\bsgamma}}
	.
\end{equation}
Denote these eigenvalues by $1=\lambda_{s,1} \ge \lambda_{s,2}\ge \lambda_{s,3}\ge \ldots$. Then we have (see \cite[Corollary~4.12]{NW08}) that 
\begin{equation}\label{errEW}
e(n,\APP_s;\Lambda^{{\rm all}})=\lambda_{s ,n+1}^{1/2}
\end{equation}
and hence 
\begin{equation}\label{infcomplam}
n(\varepsilon,\APP_s;\Lambda^{{\rm all}}) =\min\{n\ : \ \lambda_{s ,n+1} \le \varepsilon^2\}.
\end{equation}

In the following lemma, we derive the eigenpairs of the operator $W_s$. For this purpose, we define for all  $\bsk \in \NN_0^s$, the vectors
$e_{\bsk} = e_{\bsk,\alpha,\bsgamma} := \sqrt{r_{s,\alpha,\bsgamma}(\bsk)} \, H_{\bsk}$. Note that 
$\|e_{\bsk}\|_{r_{s,\alpha,\bsgamma}}=1$.

\begin{lemma}\label{le:eigpair}
The sequence of eigenpairs of the operator $W_s$ is  $(r_{s,\alpha,\bsgamma}(\bsk), e_{\bsk})_{\bsk \in \NN_0^s}$.
\end{lemma}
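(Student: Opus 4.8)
The plan is to show that $W_s$ acts on the Hermite basis simply as multiplication by the Fourier weights, after which the claim is immediate. First I would unwind the definition of the adjoint. Since $\APP_s$ is just the embedding $f\mapsto f$ into $L_2(\RR^s,\varphi_s)$, for every $f,h\in\cH_{r_{s,\alpha,\bsgamma}}$ we have $\langle W_s f, h\rangle_{r_{s,\alpha,\bsgamma}} = \langle \APP_s f, \APP_s h\rangle_{L_2(\RR^s,\varphi_s)} = \langle f,h\rangle_{L_2(\RR^s,\varphi_s)} = \sum_{\bsh\in\NN_0^s}\widehat{f}(\bsh)\,\widehat{h}(\bsh)$, where the last equality is Parseval's identity for the orthonormal basis $(H_{\bsh})_{\bsh\in\NN_0^s}$ of $L_2(\RR^s,\varphi_s)$.

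Next, writing the left-hand side in terms of the inner product \eqref{def:iprod}, namely $\langle W_s f, h\rangle_{r_{s,\alpha,\bsgamma}} = \sum_{\bsl\in\NN_0^s} r_{s,\alpha,\bsgamma}(\bsl)^{-1}\,\widehat{W_s f}(\bsl)\,\widehat{h}(\bsl)$, I would compare this with the previous display. Testing against $h=H_{\bsl}$ (which lies in $\cH_{r_{s,\alpha,\bsgamma}}$ and has Hermite coefficients $\widehat{H_{\bsl}}(\bsh)=\delta_{\bsl,\bsh}$) picks out the $\bsl$-th term on each side and forces $r_{s,\alpha,\bsgamma}(\bsl)^{-1}\,\widehat{W_s f}(\bsl)=\widehat{f}(\bsl)$, i.e.\ $\widehat{W_s f}(\bsl) = r_{s,\alpha,\bsgamma}(\bsl)\,\widehat{f}(\bsl)$ for every $\bsl\in\NN_0^s$. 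Thus $W_s$ multiplies the $\bsl$-th Hermite coefficient by $r_{s,\alpha,\bsgamma}(\bsl)$. Applying this to $e_{\bsk}=\sqrt{r_{s,\alpha,\bsgamma}(\bsk)}\,H_{\bsk}$, whose only nonzero Hermite coefficient is $\widehat{e_{\bsk}}(\bsk)=\sqrt{r_{s,\alpha,\bsgamma}(\bsk)}$, immediately yields $W_s e_{\bsk} = r_{s,\alpha,\bsgamma}(\bsk)\,e_{\bsk}$, so $(r_{s,\alpha,\bsgamma}(\bsk),e_{\bsk})$ is an eigenpair.

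It then remains to argue that these eigenpairs are exhaustive. I would verify that $(e_{\bsk})_{\bsk\in\NN_0^s}$ is a complete orthonormal system of $\cH_{r_{s,\alpha,\bsgamma}}$: orthonormality is the one-line computation $\langle e_{\bsk},e_{\bsl}\rangle_{r_{s,\alpha,\bsgamma}} = \sqrt{r_{s,\alpha,\bsgamma}(\bsk)\,r_{s,\alpha,\bsgamma}(\bsl)}\sum_{\bsh\in\NN_0^s} r_{s,\alpha,\bsgamma}(\bsh)^{-1}\delta_{\bsk,\bsh}\delta_{\bsl,\bsh} = \delta_{\bsk,\bsl}$, while completeness follows because the space is defined through Hermite expansions, so any $f\in\cH_{r_{s,\alpha,\bsgamma}}$ with all $\widehat{f}(\bsk)=0$ vanishes. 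A complete orthonormal system of eigenvectors accounts for the whole spectrum, whence $(r_{s,\alpha,\bsgamma}(\bsk),e_{\bsk})_{\bsk\in\NN_0^s}$ is the full sequence of eigenpairs. The argument is essentially routine; the only point needing slight care is the coefficient comparison, which is why I would phrase it as testing against the basis functions $H_{\bsl}$ rather than appealing to a formal term-by-term matching.
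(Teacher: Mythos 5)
Your proposal is correct and follows essentially the same route as the paper: both reduce everything to the identity $\langle W_s f, h\rangle_{r_{s,\alpha,\bsgamma}} = \langle f, h\rangle_{L_2(\RR^s,\varphi_s)}$ and the orthonormality of the Hermite polynomials, the only cosmetic difference being that you diagonalize $W_s$ on Hermite coefficients by testing against the $H_{\bsl}$, while the paper computes $\langle e_{\bsk}, W_s(e_{\bsh})\rangle$ and expands $W_s(e_{\bsh})$ in the orthonormal system $(e_{\bsk})$. Your explicit completeness argument for $(e_{\bsk})$, which the paper leaves implicit, is a welcome addition but not a departure in method.
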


\begin{proof}
	We find that for any $f,g \in \cH_{r_{s,\alpha,\bsgamma}}$ we have
	\begin{equation*}
		\langle \APP_s(f), \APP_s(g) \rangle_{L_2(\RR^s,\varphi_s)}
		=
		\langle f, \APP_s^\ast \APP_s(g) \rangle_{s,\alpha,\bsgamma}
		=
		\langle f, W_s(g) \rangle_{s,\alpha,\bsgamma}
	\end{equation*}
	and hence, due to the orthonormality of the Hermite basis functions,
	\begin{align*}
		\langle e_{\bsk}, W_s(e_{\bsh}) \rangle_{s,\alpha,\bsgamma}
		&=
		\langle e_{\bsk}, e_{\bsh} \rangle_{L_2(\RR^s,\varphi_s)}
		=
		\sqrt{r_{s,\alpha,\bsgamma}(\bsk)} \sqrt{r_{s,\alpha,\bsgamma}(\bsh)} \, \langle H_{\bsk}, H_{\bsh} \rangle_{L_2(\RR^s,\varphi_s)}
		\\
		&=
		\sqrt{r_{s,\alpha,\bsgamma}(\bsk) \, r_{s,\alpha,\bsgamma}(\bsh)} \, \delta_{\bsk,\bsh}
		.
	\end{align*}
	For $\bsk=\bsh$ this gives $\langle e_{\bsk}, W_s(e_{\bsk}) \rangle_{s,\alpha,\bsgamma} = r_{s,\alpha,\bsgamma}(\bsk)$ which in turn implies that	\begin{equation*}
		W_s(e_{\bsh})
		=
		\sum_{\bsk \in \NN_0^s} \langle W_s(e_{\bsh}), e_{\bsk} \rangle_{s,\alpha,\bsgamma} \, e_{\bsk}
		=
		r_{s,\alpha,\bsgamma}(\bsh) \, e_{\bsh}
	\end{equation*}
	and thus proves the lemma.
\end{proof}

In order to exploit the relationship between the eigenvalues of $W_s$ and the information complexity, we define the set
\begin{equation} \label{eq:set_A}
	\cA(\varepsilon, s)
	:=
	\{ \bsk \in \NN_0^s \ : \ r_{s,\alpha,\bsgamma}(\bsk) > \varepsilon^2 \}
	.
\end{equation}
Then we obtain from \eqref{infcomplam} and Lemma~\ref{le:eigpair} that 
\begin{equation}\label{infcomp1}
	n(\varepsilon,\APP_s;\Lambda^{{\rm all}})
	=
	|\cA(\varepsilon, s)|
	,
\end{equation}
which motivates us to examine the set $\cA(\varepsilon, s)$ more closely in the following lemma, which is inspired by \cite[Lemma~1]{KSW06}. 

\begin{lemma}\label{le:est-a-zeta}
Let $s \in \NN$, $\varepsilon \in (0,1)$ and let the weights satisfy $1 \ge \gamma_1 \ge \gamma_2 \ge \ldots > 0$. If $q\in \RR$ with $q > 1/\alpha$, then
\begin{equation}\label{le:it3}
|\cA(\varepsilon, s)| \le \varepsilon^{-2q} \prod_{j=1}^s \left(1 + \alpha^{\alpha q} \,\zeta(\alpha q) \, \gamma_j^q \right),
\end{equation}
where $\zeta$ denotes the Riemann zeta function.
\end{lemma}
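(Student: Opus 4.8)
The plan is to bound the cardinality of $\cA(\varepsilon,s)$ by a standard counting argument based on the elementary observation that the indicator of the defining condition can be dominated by a power of the ratio $r_{s,\alpha,\bsgamma}(\bsk)/\varepsilon^2$. Specifically, for any $q>0$, if $\bsk \in \cA(\varepsilon,s)$ then $r_{s,\alpha,\bsgamma}(\bsk)/\varepsilon^2 > 1$, so that $\left(r_{s,\alpha,\bsgamma}(\bsk)/\varepsilon^2\right)^q > 1$. Hence I would write
\begin{equation*}
|\cA(\varepsilon,s)| = \sum_{\bsk \in \cA(\varepsilon,s)} 1 \le \sum_{\bsk \in \cA(\varepsilon,s)} \left(\frac{r_{s,\alpha,\bsgamma}(\bsk)}{\varepsilon^2}\right)^q \le \varepsilon^{-2q} \sum_{\bsk \in \NN_0^s} r_{s,\alpha,\bsgamma}(\bsk)^q,
\end{equation*}
where in the last step I extend the sum over all of $\NN_0^s$ (legitimate since every summand is positive). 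This reduces the problem to estimating the $q$-th moment sum $\sum_{\bsk \in \NN_0^s} r_{s,\alpha,\bsgamma}(\bsk)^q$.

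\textbf{Factoring and summing the moment.} Since $r_{s,\alpha,\bsgamma}$ is a product weight, $r_{s,\alpha,\bsgamma}(\bsk)^q = \prod_{j=1}^s r_{\alpha,\gamma_j}(k_j)^q$, and the sum factorizes as
\begin{equation*}
\sum_{\bsk \in \NN_0^s} r_{s,\alpha,\bsgamma}(\bsk)^q = \prod_{j=1}^s \sum_{k=0}^\infty r_{\alpha,\gamma_j}(k)^q = \prod_{j=1}^s \left(1 + \sum_{k=1}^\infty r_{\alpha,\gamma_j}(k)^q\right),
\end{equation*}
using that $r_{\alpha,\gamma}(0)=1$. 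For the inner tail sum I would invoke the upper bound from Lemma~\ref{le:bdrk}, namely $r_{\alpha,\gamma}(k) \le \gamma\,(\alpha/k)^\alpha$ for $k \in \NN$, which gives $r_{\alpha,\gamma_j}(k)^q \le \gamma_j^q\,\alpha^{\alpha q}\,k^{-\alpha q}$. Summing over $k \ge 1$ yields $\sum_{k=1}^\infty r_{\alpha,\gamma_j}(k)^q \le \alpha^{\alpha q}\,\gamma_j^q \sum_{k=1}^\infty k^{-\alpha q} = \alpha^{\alpha q}\,\zeta(\alpha q)\,\gamma_j^q$, where the zeta series converges precisely because $\alpha q > 1$, which is guaranteed by the hypothesis $q > 1/\alpha$. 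Assembling the factors gives exactly the claimed bound
\begin{equation*}
|\cA(\varepsilon,s)| \le \varepsilon^{-2q} \prod_{j=1}^s \left(1 + \alpha^{\alpha q}\,\zeta(\alpha q)\,\gamma_j^q\right).
\end{equation*}

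\textbf{Main obstacle.} This argument is essentially routine, so there is no serious obstacle; the only point requiring attention is ensuring the convergence condition $\alpha q > 1$ is used correctly, which is why the hypothesis is stated as $q > 1/\alpha$ rather than merely $q>0$. One should verify that Lemma~\ref{le:bdrk} is applicable (it covers all $k \in \NN$, matching the range of the tail sum) and that the interchange of summation order and the extension of the sum from $\cA(\varepsilon,s)$ to all of $\NN_0^s$ are valid, both of which hold because all terms are nonnegative and, under $\alpha q > 1$, the full sum converges. The descending-order assumption on the weights plays no role in this particular estimate; it is the summability furnished by $\alpha q > 1$ together with the product structure that drives the bound.
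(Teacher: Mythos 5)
Your proof is correct, but it takes a genuinely different route from the paper. The paper establishes \eqref{le:it3} by induction on $s$ (following \cite[Lemma~1]{KSW06}): it first proves the recurrence $|\cA(\varepsilon,s+1)| \le |\cA(\varepsilon,s)| + \sum_{k\ge 1} |\cA(\varepsilon\,\gamma_{s+1}^{-1/2}(k/\alpha)^{\alpha/2},s)|$ by splitting according to the last coordinate $k_{s+1}$, and then feeds the rescaled thresholds into the induction hypothesis. You instead use a one-step Chebyshev-type bound, $|\cA(\varepsilon,s)| \le \sum_{\bsk\in\cA(\varepsilon,s)} (r_{s,\alpha,\bsgamma}(\bsk)/\varepsilon^2)^q \le \varepsilon^{-2q}\sum_{\bsk\in\NN_0^s} r_{s,\alpha,\bsgamma}(\bsk)^q$, and then factorize the moment sum and bound each univariate factor via Lemma~\ref{le:bdrk} and the zeta series. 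Both arguments are valid, yield the identical constant, and rest on the same inputs ($\alpha q>1$ for convergence, the upper bound of Lemma~\ref{le:bdrk}); yours is shorter, avoids the recurrence entirely, and makes explicit that the monotonicity of the weights is not needed here. It is also worth noting that your moment computation is essentially the same one the paper performs separately in the proof of Theorem~\ref{thm:main}, item 1, when verifying the SPT criterion \eqref{critNW08}, so your route in effect unifies the lemma with that calculation. What the paper's inductive formulation buys is a self-contained combinatorial recurrence for $|\cA(\varepsilon,s)|$ that does not pass through summability of the full weight sequence, but for the purpose of proving \eqref{le:it3} this is not an advantage.
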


\begin{proof}
We prove the result by induction on $s$. 

Let $s=1$. Using the upper estimate in Lemma~\ref{le:bdrk}, for any $k \in \cA(\varepsilon, 1)\setminus \{0\}$ we have
\begin{equation*}
\varepsilon^2 < r_{\alpha, \bsgamma}(k) \le \gamma_1 \left(\frac{\alpha}{k}\right)^\alpha,
\end{equation*}
which in turn gives  that $1 \le k \le \alpha \left( \frac{\gamma_1}{\varepsilon^2} \right)^{1/\alpha}$. Hence we find that
\begin{equation}\label{indanf}
|\cA(\varepsilon, 1)|\le 1 + \left|\left\{1,\ldots, \left\lfloor\alpha \left( \frac{\gamma_1}{\varepsilon^2} \right)^{1/\alpha}\right\rfloor \right\}\right| = 1 + \left\lfloor \alpha \left( \frac{\gamma_1}{\varepsilon^2} \right)^{1/\alpha}\right\rfloor.
\end{equation}
Now assume first that $\alpha \left( \gamma_1 / \varepsilon^2 \right)^{1/\alpha} \ge 1$. Then we obtain from \eqref{indanf} that 
\begin{align*}
|\cA(\varepsilon, 1)| &\le 1 + \alpha \left( \frac{\gamma_1}{\varepsilon^2} \right)^{1/\alpha} \le 1 + \left(\alpha \left( \frac{\gamma_1}{\varepsilon^2} \right)^{1/\alpha}\right)^{\alpha q} = 1 + \alpha^{\alpha q} \left( \frac{\gamma_1}{\varepsilon^2} \right)^{q}\\
&=  \left( \varepsilon^{2q} + \alpha^{\alpha q} \gamma_1^q \right) \varepsilon^{-2q} \le \left(1 + \alpha^{\alpha q} \,\zeta(\alpha q) \gamma_1^q \right) \varepsilon^{-2q}
\end{align*}
for all $q > 1/\alpha$, where we used that $\zeta(x) > 1$ for all $x > 1$. If, on the other hand, we assume that $\alpha \left( \gamma_1 / \varepsilon^2 \right)^{1/\alpha} < 1$, then we trivially have
\begin{equation*}
|\cA(\varepsilon, 1)| \le 1 + \left\lfloor\alpha \left( \frac{\gamma_1}{\varepsilon^2} \right)^{1/\alpha}\right\rfloor = 1 \le 		\left(1 + \alpha^{\alpha q} \,\zeta(\alpha q) \gamma_1^q \right) \varepsilon^{-2q}.
\end{equation*}
Thus the result is shown for $s=1$.		

Now assume that the statement holds true for $s \in \NN$ and arbitrary $\varepsilon$. First we show the recurrence
\begin{equation}\label{recA}
|\cA(\varepsilon, s+1)| \le |\cA(\varepsilon, s)| + \sum_{k_{s+1}=1}^\infty \left|\cA\left(\frac{\varepsilon}{\sqrt{\gamma_{s+1}}} \left(\frac{k_{s+1}}{\alpha}\right)^{\alpha/2}, s \right)\right|.
\end{equation}
To this end, assume that $\bsk = (k_1,\ldots, k_{s+1}) \in \cA(\varepsilon, s+1)$. Then we have
\begin{equation*}
r_{s+1,\alpha,\bsgamma}(\bsk) = r_{s,\alpha,\bsgamma}(\bsk_{[s]}) \, r_{\alpha,\gamma_{s+1}}(k_{s+1}) > \varepsilon^2 .
\end{equation*}
If $k_{s+1}=0$, then $r_{\alpha,\gamma_{s+1}}(k_{s+1})=1$ and so $r_{s,\alpha,\bsgamma}(\bsk_{[s]}) > \varepsilon^2$, that is, $\bsk_{[s]} \in \cA(\varepsilon, s)$. If on the other hand $k_{s+1} > 0$, we see that 
\begin{equation*}
r_{s,\alpha,\bsgamma}(\bsk_{[s]}) > r^{-1}_{\alpha,\gamma_{s+1}}(k_{s+1}) \, \varepsilon^2 \ge \frac{\varepsilon^2}{\gamma_{s+1}} \left(\frac{k_{s+1}}{\alpha}\right)^{\alpha}
\end{equation*}
by Lemma~\ref{le:bdrk}.
Combining both observations yields 

\begin{eqnarray*}
\lefteqn{\cA(\varepsilon, s+1)}\\
&\subseteq & \big\{(\bsk,0) \ : \ \bsk \in \cA(\varepsilon, s)\big\} \,\dot\cup\,\bigcup_{k_{s+1}=1}^{\infty} \left\{ (\bsk,k_{s+1}) \ : \  \bsk \in \cA\left(\frac{\varepsilon}{\sqrt{\gamma_{s+1}}} \left(\frac{k_{s+1}}{\alpha}\right)^{\alpha/2}, s\right) \! \right\}  ,
\end{eqnarray*}
 
where $\dot\cup$ indicates a disjoint union. From here \eqref{recA} follows immediately.
	
Now, using the recurrence formula \eqref{recA} and the induction hypothesis we obtain
\begin{eqnarray*}
\lefteqn{	|\cA(\varepsilon, s+1)|}\\
&\le& |\mathcal{A}(\varepsilon,s)| +\sum_{k=1}^\infty\left|\cA\left(\frac{\varepsilon}{\sqrt{\gamma_{s+1}}} \left(\frac{k}{\alpha}\right)^{\alpha/2}, s \right)\right|\\
&\le& \varepsilon^{-2q} \prod_{j=1}^s \left(1 + \alpha^{\alpha q} \,\zeta(\alpha q) \, \gamma_j^q \right)+\prod_{j=1}^s \left(1 + \alpha^{\alpha q} \zeta(\alpha q) \,\gamma_j^q \right) \sum_{k=1}^\infty \left[\frac{\varepsilon}{\sqrt{\gamma_{s+1}}} \left(\frac{k}{\alpha}\right)^{\alpha/2}\right]^{-2q}\\
&=& \varepsilon^{-2q} \prod_{j=1}^s \left(1 + \alpha^{\alpha q} \,\zeta(\alpha q) \, \gamma_j^q \right) \left( 1 + \gamma_{s+1}^q \alpha^{\alpha q} \sum_{k=1}^\infty \frac{1}{k^{\alpha q}} \right)\\
&=& \varepsilon^{-2q} \prod_{j=1}^{s+1} \left(1 + \alpha^{\alpha q} \,\zeta(\alpha q) \,\gamma_j^q \right).
\end{eqnarray*}
This finishes the proof.
\end{proof}

\begin{proof}[Proof of Theorem~\ref{thm:main}]
We prove the necessary and sufficient conditions for each of the listed notions of tractability.
\begin{enumerate} 
\item In order to give a necessary and sufficient condition for SPT for $\Lambda^{\rm all}$  
we use a criterion from \cite[Section~5.1]{NW08}. From \cite[Theorem 5.2]{NW08} we find that the problem $\APP$ is SPT for $\Lambda^{\rm all}$ if and only if there exists a $\tau>0$ such that 
\begin{equation}\label{critNW08} 
\sup_{s \in \NN} \left(\sum_{\bsk \in \NN_0^s} (r_{s,\alpha,\bsgamma}(\bsk))^\tau \right)^{1/\tau} < \infty
\end{equation}
and then $$\tau^\ast(\Lambda^{\mathrm{all}})=\inf\{2 \tau \ : \ \tau \text{ satisfies \eqref{critNW08}}\}.$$

Assume that $s_{\bsgamma} < \infty$. Then take $\tau$ such that $\tau > \max(s_{\bsgamma},1/\alpha)$ and thus $\sum_{j=1}^\infty \gamma_j^\tau$ is finite. Note that $\alpha \tau >1$ and hence $\zeta(\alpha \tau)< \infty$. For the sum in \eqref{critNW08} we then obtain, making use of the upper estimate in Lemma~\ref{le:bdrk},
\begin{align*}
	 \sum_{\bsk \in \NN_0^s} (r_{s,\alpha,\bsgamma}(\bsk))^{\tau}
	 &= \prod_{j=1}^{s}\left(\sum_{k=0}^\infty (r_{\alpha,\gamma_j}(k))^\tau \right)
	 \le \prod_{j=1}^{s}\left( 1 + \sum_{k=1}^\infty \gamma_j^\tau \left( \frac{\alpha}{k} \right)^{\alpha \tau} \right) \\
	 &\le \prod_{j=1}^{s}\left(1 + \gamma_j^\tau \alpha^{\alpha \tau} \zeta(\alpha \tau) \right)
	 \le \exp\left( \alpha^{\alpha \tau} \zeta(\alpha \tau) \sum_{j=1}^\infty \gamma_j^\tau\right)
	 < \infty
	 .
\end{align*}
 This implies that we have SPT and that 
\begin{equation}\label{tstup}
\tau^\ast(\Lambda^{\mathrm{all}}) \le 2 \max\left(s_{\bsgamma},\frac{1}{\alpha}\right).
\end{equation}

On the other hand, assume we have SPT. Then there exists a finite $\tau$ such that \eqref{critNW08} holds true. 
Using the lower bound in Lemma~\ref{le:bdrk}, we have that
\begin{eqnarray*}
\sum_{\bsk \in \NN_0^s} (r_{s,\alpha,\bsgamma}(\bsk))^\tau = \prod_{j=1}^{s}\left(\sum_{k=0}^\infty (r_{\alpha,\gamma_j}(k))^\tau \right) \ge  \prod_{j=1}^{s}\left(1 + \gamma_j^\tau  \sum_{k=1}^\infty \frac{1}{k^{\alpha \tau}}\right).
\end{eqnarray*}
Since \eqref{critNW08} holds true we obviously have   $\tau > 1/\alpha$. Then
\begin{eqnarray*}
\sum_{\bsk \in \NN_0^s} (r_{s,\alpha,\bsgamma}(\bsk))^\tau \ge \prod_{j=1}^{s} (1 + \gamma_j^\tau \zeta(\alpha \tau)) \ge  \zeta(\alpha \tau) \sum_{j=1}^s \gamma_j^\tau.
\end{eqnarray*}
Again, since \eqref{critNW08} holds true, we also have   that $\sum_{j=1}^\infty \gamma_j^\tau<\infty$ and hence $s_{\bsgamma}< \tau < \infty$. 
Combining both results yields that $\tau > \max(s_{\bsgamma},1/\alpha)$ and hence also 
\begin{equation}\label{tstdn}
\tau^\ast(\Lambda^{\mathrm{all}}) \ge 2 \max\left(s_{\bsgamma},\frac{1}{\alpha}\right).
\end{equation}
Equations \eqref{tstup} and \eqref{tstdn} then imply that $$\tau^\ast(\Lambda^{\mathrm{all}}) = 2 \max\left(s_{\bsgamma},\frac{1}{\alpha}\right).$$

\item In order to prove the equivalence of SPT and PT it suffices to prove that PT implies SPT. So let us assume that $\APP$ is polynomially tractable, i.e., there exist reals $C,p>0$ and $q \ge 0$ such that $$n(\varepsilon,\APP_s;\Lambda^{{\rm all}}) \le C s^q \varepsilon^{-p} \quad \mbox{for all $\varepsilon \in (0,1)$ and for all $s \in \NN$}.$$ Without loss of generality we may assume that $q$ is an integer. Take $s \in \NN$ such that $s \ge q+1$ and 
set \[
B_s:=\{\bsh \in \{0,1\}^s\colon \text{precisely $q+1$ components of $\bsh$ are equal to $1$}\}\,.
\]
Now choose $\varepsilon_*=\frac{1}{2} \gamma_s^{(q+1)/2}$. Choose $\bsh \in B_s$  and let $\uu \subseteq [s]$ be the set of indices of $\bsh$ which are equal to $1$. Then we have 
$$r_{s,\alpha,\bsgamma}(\bsh) \ge  \prod_{j \in \uu} \gamma_j \ge \gamma_s^{q+1} > \varepsilon_*^2.$$
Hence $B_s\subseteq\cA(\varepsilon_*,s)$ and this implies $$|\cA(\varepsilon_*,s)| \ge|B_s|= {s \choose q+1} \ge \frac{(s-q)^{q+1}}{(q+1)!} \ge \frac{s^{q+1}}{(q+1)! (q+1)^{q+1}}=: s^{q+1} c_q\,,$$
where we used $s\ge q+1$ for the third inequality.
 
This now yields
\begin{eqnarray*}
s^{q+1} c_q \le |\cA(\varepsilon_*,s)|  = n(\varepsilon^*,\APP_s;\Lambda^{{\rm all}}) \le C s^q \varepsilon_*^{-p} = 2^p C s^q \gamma_s^{-(q+1)p/2}.  
\end{eqnarray*}
This implies that $$\gamma_s^{(q+1)p/2} \ll_{p,q} \frac{1}{s},$$ where $\ll_{p,q}$ means that there is an implied factor which only depends on $p$ and $q$, and hence $$\gamma_s \ll_{p,q} \frac{1}{s^{2/((q+1)p)}}.$$ This estimate holds for all $s \ge q+1$. Hence the sum exponent $s_{\bsgamma}$ of the sequence $\bsgamma=(\gamma_j)_{j \ge 1}$ is finite, $s_{\bsgamma} < \infty$, and this implies by the first statement that we have SPT.

\item We use the following criterion for QPT taken from \cite[Sec.~23.1.1]{NW12}, which states that QPT holds if and only if there exists $\tau>0$ such that 
\begin{equation} \label{condQPT}
	C
	:=
	\sup_{s \in \NN} \frac{1}{s^2} \left(\sum_{j=1}^{\infty} \lambda_{s,j}^{\tau(1+\log s)} \right)^{1/\tau} 
	< 
	\infty,
\end{equation}
where $\lambda_{s,j}$ is the $j$-th eigenvalue of the operator $W_s$ from \eqref{def:Ws} in non-increasing order. 

Assume that $\bsgamma_I<1$. For the considered Hermite space $\cH_{r_{s,\alpha,\bsgamma}}$ we have
\begin{align}\label{condQPT2}
\lefteqn{	\sum_{j=1}^{\infty} \lambda_{s,j}^{\tau(1+\log s)}}\nonumber\\
	&=
	\sum_{\bsk \in \NN_0^s} (r_{s,\alpha,\bsgamma}(\bsk))^{\tau (1 + \log s)}\nonumber\\
	&=
	\prod_{j=1}^s \left( 1 + \sum_{k=1}^\infty (r_{\alpha,\gamma_j}(k))^{\tau (1 + \log s)} \right)\nonumber
	\\
	&=
	\prod_{j=1}^s \left( 1 + \gamma_j^{\tau (1 + \log s)} \left( \sum_{k=1}^{\alpha -1} \left(\frac{1}{k!}\right)^{\tau (1 + \log s)} 
	+ \sum_{k=\alpha}^\infty \left(\frac{(k-\alpha)!}{k!}\right)^{\tau (1 + \log s)} \right) \right)
	\\
	&\le
	\prod_{j=1}^s \left( 1 + \gamma_j^{\tau (1 + \log s)} \left(\sum_{k=1}^{\alpha -1} \left(\frac{1}{k!}\right)^{\tau (1 + \log s)}  + \sum_{k=\alpha}^\infty \left(\frac1{(k-\alpha+1)^\alpha}\right)^{\tau (1 + \log s)} \right) \right)\nonumber
	\\
	&=
	\prod_{j=1}^s \left( 1 + \gamma_j^{\tau (1 + \log s)} \left(\sum_{k=1}^{\alpha -1} \left(\frac{1}{k!}\right)^{\tau (1 + \log s)}  + \sum_{k=1}^\infty \left(\frac1{k^\alpha}\right)^{\tau (1 + \log s)} \right) \right)\nonumber
	\\
	&=
	\prod_{j=1}^s \left( 1 + \gamma_j^{\tau (1 + \log s)} \left(\sum_{k=1}^{\alpha -1} \left(\frac{1}{k!}\right)^{\tau (1 + \log s)}  + \zeta(\alpha \tau (1 + \log s))  \right) \right)
	.\nonumber
\end{align}
Put $\zeta_s:=\zeta(\alpha \tau (1 + \log s))$. From now on we assume, without loss of generality, that $\tau>1/\alpha$, so that $\zeta_s < \infty$. 

Next we have
\begin{eqnarray*}
\sum_{k=1}^{\infty} \left(\frac{1}{k!}\right)^{\tau (1 + \log s)} & = & 1+\frac{1}{2^{\tau(1+\log s)}}+\frac{1}{6^{\tau(1+\log s)}} +\sum_{k=4}^{\infty} \left(\frac{1}{k!}\right)^{\tau (1 + \log s)}\\
 & \le & 1+\frac{1}{2^{\tau(1+\log s)}}+\frac{1}{6^{\tau(1+\log s)}} +\sum_{k=4}^{\infty} \left(\frac{1}{2^{\tau(1 + \log s)}}\right)^k\\
 & = & 1+\frac{1}{2^{\tau(1+\log s)}}+\frac{1}{6^{\tau(1+\log s)}} + \frac{1}{2^{4 \tau (1 + \log s)}} \frac{1}{1-1/2^{\tau(1+\log s)}}\\
 & = & 1+\frac{1}{2^{\tau(1+\log s)}}+\frac{1}{6^{\tau(1+\log s)}} +  \frac{1}{2^{4 \tau (1 + \log s)}-2^{3 \tau (1+\log s)}}\\
 & \le & 1+\frac{1}{2^{\tau(1+\log s)}}+\frac{1}{6^{\tau(1+\log s)}} + \frac{1}{2^{3 \tau \log s}}\ \frac{1}{2^{4 \tau}-2^{3\tau}}\\
 & \le & 1+ \frac{1}{2^{\tau \log s}} \left(2+ \frac{1}{2^{4 \tau}-2^{3\tau}}\right)\\
 & = & 1+ \frac{c_{\tau}}{s^{\tau \log 2}}, 
\end{eqnarray*}
where we put $c_{\tau}:=2+ 1/(2^{4\tau}-2^{3 \tau})$. This in turn gives that
\begin{align*}
\frac{1}{s^2} \left(\sum_{j=1}^{\infty} \lambda_{s,j}^{\tau(1+\log s)} \right)^{1/\tau}
&\le
\frac{1}{s^2} \left( \prod_{j=1}^s \left( 1 + \gamma_j^{\tau (1 + \log s)} \left(  \frac{c_{\tau}}{s^{\tau \log 2}}+ \zeta_s \right) \right) \right)^{1/\tau}
\\
&=
\exp\left( \frac1{\tau} \sum_{j=1}^s \log\left( 1 + \gamma_j^{\tau (1 + \log s)} \left( \zeta_s+  \frac{c_{\tau}}{s^{\tau \log 2}}\right) \right) - 2 \log s \right)
\\
&\le
\exp\left( \frac1{\tau} \, \left( \zeta_s+  \frac{c_{\tau}}{s^{\tau \log 2}}\right) \sum_{j=1}^s \gamma_j^{\tau (1 + \log s)} - 2 \log s \right)
,
\end{align*}
where we used that $\log(1+x) \le x$ for all $x \ge 0$. Now we use the well-known fact that $\zeta(x) \le 1 + \frac1{x-1}$ for all $x>1$ and thus
\begin{equation*}
\zeta_s \le 1 + \frac1{(\alpha \tau - 1) + \alpha \tau \log s}.
\end{equation*}
Then we obtain
\begin{align*}
\lefteqn{\frac{1}{s^2} \left(\sum_{j=1}^{\infty} \lambda_{s,j}^{\tau(1+\log s)} \right)^{1/\tau}}\\
&\le \exp\left( \frac1{\tau} \, \left(1 +  \frac1{(\alpha \tau - 1) + \alpha \tau \log s} + \frac{c_{\tau}}{s^{\tau \log 2}}  \right) \sum_{j=1}^s \gamma_j^{\tau (1 + \log s)} - 2 \log s \right).
\end{align*}

Now we distinguish  two cases:
	\begin{itemize} 
	\item Case $\bsgamma_I=0$: Then $\lim_{j \rightarrow \infty}\gamma_j=0$ and hence, for every $\varepsilon >0$ there is a positive integer $J=J(\varepsilon)$ such that $\gamma_j \le \varepsilon$ for all $j \ge J$. Then 
	\begin{eqnarray*}
	 \sum_{j=1}^s \gamma_j^{\tau (1 + \ln s)} & \le & \sum_{j=1}^{J-1} 1+ \sum_{j =J}^s \varepsilon^{\tau \ln s} \le J-1+ s^{1-\tau \ln \varepsilon^{-1}}. 
	\end{eqnarray*}
	Choosing $\varepsilon = \exp(-1/\tau)$ we obtain $$\sum_{j=1}^s \gamma_j^{\tau (1 + \ln s)}  \le J.$$ Note that the last $J$ depends on $\tau$, but it is finite for every fixed $\tau$.  Thus, if $\tau > 1/\alpha$ and $\bsgamma_I =0$ we have 
	\begin{eqnarray*}
		\lefteqn{\frac{1}{s^2} \left(\sum_{j=1}^{\infty} \lambda_{s,j}^{\tau(1+\ln s)} \right)^{1/\tau}}\\
		&\le&
		\exp\left( \frac1{\tau} \, \left(1 +  \frac1{(\alpha \tau - 1) + \alpha \tau \log s} + \frac{c_{\tau}}{s^{\tau \log 2}}  \right) J - 2 \log s \right)
		\\
		&\rightarrow& 0 \quad \text{if $s\to \infty$.}
	\end{eqnarray*}
	
	By the characterization in \eqref{condQPT}, this implies QPT.
      \item Case $\bsgamma_I \in (0,1)$:  then for every real $\gamma_{\ast} \in (\bsgamma_I,1)$ there exists a $j_0=j_0(\gamma_{\ast}) \in \NN$ such that $$\gamma_j \le \gamma_{\ast} < 1 \quad \mbox{for all}\  j > j_0.$$ Then we obtain for every $s \in \NN$ that 
	\begin{align*}
		\sum_{j=1}^s \gamma_j^{\tau(1+\ln s)} 
		&\le
		j_0 +  \gamma_\ast^{\tau(1+ \ln s)} \max(s-j_0,0) \\
		&=
		j_0 +\frac{\gamma_\ast^{\tau} \max(s-j_0,0)}{s^{\tau \ln \gamma_\ast^{-1}}}
		\le 
		j_0+1
		,
	\end{align*}
	as long as $\tau \ge (\ln \gamma_\ast^{-1})^{-1}$. Thus, if $\tau > 1/\alpha$ and $\tau \ge (\ln \gamma_\ast^{-1})^{-1}$ we have 
	\begin{eqnarray*}
		\lefteqn{\frac{1}{s^2} \left(\sum_{j=1}^{\infty} \lambda_{s,j}^{\tau(1+\ln s)} \right)^{1/\tau}}\\
		&\le&
		\exp\left( \frac1{\tau} \, \left(1 +  \frac1{(\alpha \tau - 1) + \alpha \tau \log s} + \frac{c_{\tau}}{s^{\tau \log 2}}  \right) (j_0+1) - 2 \log s \right)
		\\
		&\rightarrow& 0 \quad \text{if $s \to \infty$.}
	\end{eqnarray*}
	Again, by the characterization in \eqref{condQPT}, this implies QPT. 
      \end{itemize}

Of course, QPT implies UWT and this in turn implies WT. 

So it suffices to show that WT implies $\bsgamma_I<1$. Assume on the contrary that $\bsgamma_I=1$, i.e., $\gamma_j=1$ for all $j \in \NN$. Then for all $\bsk \in \{0,1\}^s$ we have $r_{s,\alpha,\bsgamma}(\bsk)=1$. This means that $\{0,1\}^s \subseteq \mathcal{A}(\varepsilon,s)$, where $\mathcal{A}(\varepsilon,s)$ is defined in \eqref{eq:set_A}, and hence, according to \eqref{infcomp1}, $n(\varepsilon,\APP_s;\Lambda^{{\rm all}}) \ge 2^s$. This means that the approximation problem suffers from the curse  of dimensionality and, in particular, we cannot have WT. This proves the first assertion of item 3. 

It remains to show the result about the exponent of QPT. Again from \cite[Sec.~23.1.1]{NW12} we know that the exponent of QPT is $$t^{\ast}(\Lambda^{{\rm all}})=2 \inf\{\tau \ : \ \tau \text{ such that \eqref{condQPT} holds}\}.$$ 
From the first part of the proof of item 3.~it follows that $\tau$ satisfies \eqref{condQPT} as long as 
\[
\tau>
\begin{cases}
\max(\frac{1}{\alpha},(\ln \bsgamma_I^{-1})^{-1})& \text{ if } \bsgamma_I\ne 0,\\
\frac{1}{\alpha}& \text{ if } \bsgamma_I= 0.
\end{cases}
\]

Therefore, 
	\begin{equation*}
		t^{\ast}(\Lambda^{{\rm all}})
		\le 
		2 \max\left(\frac{1}{\alpha} ,\frac{1}{\ln \bsgamma_I^{-1}}\right)
		.
	\end{equation*}

Assume now that we have QPT. Then \eqref{condQPT} holds true for some $\tau>0$. Considering the special instance $s=1$ this means $$C\ge \left(\sum_{j=1}^{\infty} \lambda_{1,j}^{\tau} \right)^{1/\tau}.$$ According to \eqref{condQPT2} we then have 
\begin{align*}
C \ge \left(1+\gamma_1^{\tau} \left(\sum_{k=\alpha+1}^{\infty} \left(\frac{(k-\alpha)!}{k!}\right)^{\tau}\right)\right)^{1/\tau} \ge \gamma_1 \left(\sum_{k=\alpha+1}^{\infty} \frac{1}{k^{\alpha \tau}}\right)^{1/\tau}
\end{align*}
and hence we must have $\tau>1/\alpha$. This already implies the result $t^{\ast}(\Lambda^{{\rm all}})
		= 2/\alpha$, whenever $\bsgamma_I=0$.

It remains to study the case $\bsgamma_I>0$. Now, again according to \eqref{condQPT} and \eqref{condQPT2}, there exists a $\tau>1/\alpha$ such that for all $s \in \NN$ we have 
\begin{align*}
C  \ge \frac{1}{s^2} \left( \prod_{j=1}^s \left(1+\gamma_j^{(\tau(1+\log s)} \right)\right)^{1/\tau} = \exp\left(\frac{1}{\tau} \sum_{j=1}^s \log\left(1+\gamma_j^{\tau(1+\log s)}\right) -2 \log s \right).
\end{align*}
Taking the logarithm leads to 
\begin{eqnarray*}
\log C \ge \frac{1}{\tau} \sum_{j=1}^s \log\left(1+\gamma_j^{\tau(1+\log s)}\right) - 2 \log s  \ge  \frac{s}{\tau} \log\left(1+\bsgamma_I^{\tau(1+\log s)}\right) - 2 \log s
\end{eqnarray*}
for all $s \in \NN$. Since $\bsgamma_I \in (0,1)$ and since $\log(1+x)\ge x \log 2$ for all $x \in [0,1]$ it follows that for all $s \in \NN$ we have $$\log C \ge \frac{s \log 2}{\tau}  \bsgamma_I^{\tau(1+\log s)} -2 \log s = \frac{\bsgamma_I^{\tau}  s \log 2}{\tau \, s^{\tau \log \bsgamma_I^{-1}}}  -2 \log s.$$ This implies that $\tau \ge (\log \bsgamma_I^{-1})^{-1}$. Therefore, we also have that $$t^{\ast}(\Lambda^{{\rm all}}) \ge 2 \max\left(\frac{1}{\alpha} ,\frac{1}{\log \bsgamma_I^{-1}}\right).$$ 
Hence  item 3.~is proven . 
\item We know from \eqref{infcomp1} that $n(\varepsilon,\APP_s) = |\cA(\varepsilon, s)|$. Fixing some $q>1/\alpha$, we get from Lemma \ref{le:est-a-zeta}  
\begin{eqnarray*}
n(\varepsilon,\APP_s;\Lambda^{{\rm all}})  & = & |\cA(\varepsilon, s)|\\
& \le & \varepsilon^{-2q} \prod_{j=1}^s \left(1 + \alpha^{\alpha q} \,\zeta(\alpha q) \, \gamma_j^q \right)\\
& \le & \varepsilon^{-2q} \left(1 + \alpha^{\alpha q} \,\zeta(\alpha q)\right)^s.
\end{eqnarray*}
Write $c:=1 + \alpha^{\alpha q} \,\zeta(\alpha q)$ which is larger than $1$. Then we have $$\log n(\varepsilon,\APP_s;\Lambda^{{\rm all}})  \le 2q \log \varepsilon^{-1} +s \log c.$$ Hence, for $\sigma>1$ 
$$\lim_{s+\varepsilon^{-1}\rightarrow \infty} \frac{\log n(\varepsilon,\APP_s;\Lambda^{{\rm all}})}{s^{\sigma}+\varepsilon^{-\tau}} \le \lim_{s+\varepsilon^{-1}\rightarrow \infty} \frac{2q \log \varepsilon^{-1} +s \log c}{s^{\sigma}+\varepsilon^{-\tau}}=0.$$ This implies $(\sigma,\tau)$-WT.
\end{enumerate}
\end{proof}

\begin{proof}[Proof of Corollary~\ref{cor:app:R:std}]
From Proposition~\ref{pr:r_rho} and \ref{pr:r_psi} in conjunction with Proposition~\ref{pr:embb:err} we find that for $R\in \{\rho_{s,\alpha,\bsgamma},\psi_{s,\alpha,\bsgamma}\}$ and for all $n \in \NN$ we have $$e(n,\APP_R;\Lambda^{{\rm all}}) \le e(n,\APP_{r_{s,\alpha,\bsgamma}};\Lambda^{{\rm all}})$$ and hence $$n(\varepsilon,\APP_R;\Lambda^{{\rm all}}) \le n(\varepsilon,\APP_{r_{s,\alpha,\bsgamma}};\Lambda^{{\rm all}})\quad \mbox{for all $\varepsilon \in (0,1)$.}$$ Thus the sufficient conditions follow directly from Theorem~\ref{thm:main}.

Let $R\in \{\rho_{s,\alpha,\bsgamma},\psi_{s,\alpha,\bsgamma}\}$ and assume that we have (S)PT for $L_2$-approximation in $\cH_R$ for the class $\Lambda^{{\rm all}}$. Then it follows from Proposition~\ref{pr:r_rho} and \ref{pr:r_psi} in conjunction with Proposition~\ref{pr:embb:err} that we have (S)PT for $L_2$-approximation in $\cH_{r_{s,\alpha,\bsgamma/(t \alpha^{\alpha})}}$ for the class $\Lambda^{{\rm all}}$, where $t=1$ if $R=\rho_{s,\alpha,\bsgamma}$ and $t=2$ if  $R=\psi_{s,\alpha,\bsgamma}$. From Theorem~\ref{thm:main} we now obtain $s_{\bsgamma/(t \alpha^{\alpha})}< \infty$. Since $s_{\bsgamma}=s_{\bsgamma/(t \alpha^{\alpha})}$ this implies $s_{\bsgamma}< \infty$.
\end{proof}

\subsection{Tractability for the class $\Lambda^{\text{std}}$}

The next theorem states sufficient conditions for tractability of $L_2$-approximation for the class $\Lambda^{{\rm std}}$.

\begin{theorem} \label{thm:tract-standard}
Let $\alpha > 1$ and $\bsgamma$ be a sequence of weights. Consider multivariate approximation $\APP = (\APP_s)_{s \ge 1}$ for the weighted Hermite spaces $\cH_R$, $R\in \{r_{s,\alpha,\bsgamma},\rho_{s,\alpha,\bsgamma},\psi_{s,\alpha,\bsgamma}\}$ for $s \in \NN$ and for the information class $\Lambda^{{\rm std}}$. Then we have the following sufficient conditions:
\begin{enumerate}
\item SPT holds if $$\sum_{j =1}^{\infty} \gamma_j < \infty.$$
In this case the exponent of SPT satisfies 
\begin{equation}\label{cond_exp_spt_std}		
		\tau^{\ast}(\Lambda^{\mathrm{std}}) = 2 \max\left(\frac{1}{\alpha},s_{\bsgamma}\right).
		\end{equation}
\item PT holds if 
\begin{equation}\label{cond_pt_std}	
	 \limsup_{s \to \infty} \frac1{\ln s} \sum_{j=1}^s \gamma_j < \infty.
\end{equation}
\item WT holds if
		\begin{equation}\label{cond_wt_std}
			\lim_{s \to \infty} \frac1{s} \sum_{j=1}^s \gamma_j = 0.
		\end{equation}
\item For $\sigma\in (0,1]$ $(\sigma,\tau)$-WT holds if
		\begin{equation}\label{cond_tswt_std}
			\lim_{s \to \infty} \frac1{s^\sigma} \sum_{j=1}^s \gamma_j = 0.
		\end{equation}
\item UWT holds if 
		\begin{equation}\label{cond_uwt_std}
			\lim_{s \to \infty} \frac1{s^\sigma} \sum_{j=1}^s \gamma_j = 0
			\quad \text{for all } \sigma \in (0,1].
		\end{equation}
\end{enumerate}
\end{theorem}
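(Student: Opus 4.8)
The plan is to reduce everything to the eigenvalues of the operator $W_s=\APP_s^\ast\APP_s$, which by Lemma~\ref{le:eigpair} are exactly the numbers $r_{s,\alpha,\bsgamma}(\bsk)$, $\bsk\in\NN_0^s$, and then to feed bounds on these eigenvalues into a general upper bound for the power of standard information for $L_2$-approximation (of the type available in \cite{NW12} and the sampling-recovery literature). Since $\Lambda^{\mathrm{std}}\subseteq\Lambda^{\mathrm{all}}$ the problem is already controlled from above by $\Lambda^{\mathrm{all}}$; the point here is that standard information is essentially as powerful once the trace of $W_s$ is under control, and the hypothesis $\alpha>1$ is precisely what guarantees a finite trace. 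Concretely, the first step is to record, using the upper bound of Lemma~\ref{le:bdrk}, that
\begin{equation*}
\tra(W_s)=\sum_{\bsk\in\NN_0^s}r_{s,\alpha,\bsgamma}(\bsk)=\prod_{j=1}^s\Big(\sum_{k=0}^\infty r_{\alpha,\gamma_j}(k)\Big)\le\prod_{j=1}^s\big(1+\alpha^\alpha\zeta(\alpha)\,\gamma_j\big)\le\exp\Big(\alpha^\alpha\zeta(\alpha)\sum_{j=1}^s\gamma_j\Big).
\end{equation*}
Thus the single quantity $\sum_{j=1}^s\gamma_j$ governs the growth of the trace, which explains why all five sufficient conditions are phrased in terms of it.

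The second step treats PT, WT, $(\sigma,\tau)$-WT and UWT, together with the mere sufficiency of SPT, by combining the trace estimate with a Monte-Carlo/least-squares type bound of the form $e(n,\APP_s;\Lambda^{\mathrm{std}})^2\le C\,\tra(W_s)/n$ (up to harmless logarithmic factors in $n$, which do not affect any asymptotic notion). Inverting this gives $n(\varepsilon,\APP_s;\Lambda^{\mathrm{std}})\ll \tra(W_s)\,\varepsilon^{-2}$, whence $\log n(\varepsilon,\APP_s;\Lambda^{\mathrm{std}})\le \alpha^\alpha\zeta(\alpha)\sum_{j=1}^s\gamma_j+2\log\varepsilon^{-1}+O(1)$. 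Each notion then follows by inserting the corresponding growth condition: \eqref{cond_pt_std} makes the trace grow polynomially in $s$ and yields PT; \eqref{cond_wt_std} gives $\log\tra(W_s)=o(s)$ and hence WT; \eqref{cond_tswt_std} gives $\log\tra(W_s)=o(s^\sigma)$ and hence $(\sigma,\tau)$-WT for every $\tau>0$; and \eqref{cond_uwt_std} gives UWT. Finiteness of $\sum_{j=1}^\infty\gamma_j$ bounds the trace uniformly in $s$ and so already yields SPT, with exponent at most $2$.

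The third step is the sharp exponent \eqref{cond_exp_spt_std}, and this is where the main work lies. The lower bound $\tau^\ast(\Lambda^{\mathrm{std}})\ge 2\max(1/\alpha,s_{\bsgamma})$ is free: by \eqref{est:icomp_allstd} we have $n(\varepsilon,\APP_s;\Lambda^{\mathrm{all}})\le n(\varepsilon,\APP_s;\Lambda^{\mathrm{std}})$, so any admissible SPT exponent for $\Lambda^{\mathrm{std}}$ is admissible for $\Lambda^{\mathrm{all}}$, and Theorem~\ref{thm:main} identifies the latter as $2\max(1/\alpha,s_{\bsgamma})$. For the matching upper bound the trace estimate is too crude, as it only gives exponent $2$; instead I would invoke a sharp sampling bound of the form $e(n,\APP_s;\Lambda^{\mathrm{std}})^2\ll \tfrac1n\sum_{k\ge cn}\lambda_{s,k}$. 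Fixing any $\tau>\max(1/\alpha,s_{\bsgamma})$, the summability criterion \eqref{critNW08} from the proof of Theorem~\ref{thm:main} bounds $\sum_{\bsk}r_{s,\alpha,\bsgamma}(\bsk)^\tau$ uniformly in $s$; a standard monotonicity argument then yields $\lambda_{s,k}\ll k^{-1/\tau}$ with an $s$-independent constant, so the tail sum decays like $n^{1-1/\tau}$ and the sampling bound gives $e(n,\APP_s;\Lambda^{\mathrm{std}})\ll n^{-1/(2\tau)}$ uniformly in $s$. Inverting yields $n(\varepsilon,\APP_s;\Lambda^{\mathrm{std}})\ll\varepsilon^{-2\tau}$ uniformly in $s$, hence $\tau^\ast(\Lambda^{\mathrm{std}})\le 2\tau$, and letting $\tau\downarrow\max(1/\alpha,s_{\bsgamma})$ gives equality. \textbf{The main obstacle} is exactly this upper bound: one must invoke a sharp sampling-recovery estimate and, crucially, verify that all of its constants are independent of the dimension $s$, so that the uniform control of the power sums transfers to a uniform bound on the sampling numbers.

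Finally, the cases $R\in\{\rho_{s,\alpha,\bsgamma},\psi_{s,\alpha,\bsgamma}\}$ require no new analysis. Propositions~\ref{pr:r_rho} and \ref{pr:r_psi} give $\|f\|_{r_{s,\alpha,\bsgamma}}\le\|f\|_{R}$, so Proposition~\ref{pr:embb:err} yields $e(n,\APP_R;\Lambda^{\mathrm{std}})\le e(n,\APP_{r_{s,\alpha,\bsgamma}};\Lambda^{\mathrm{std}})$, and hence every sufficient condition transfers verbatim, exactly as in Corollary~\ref{cor:app:R:std}. For the SPT exponent the same embedding supplies the upper bound $\tau^\ast\le 2\max(1/\alpha,s_{\bsgamma})$, while the lower bound follows from \eqref{est:icomp_allstd} together with the $\Lambda^{\mathrm{all}}$-exponent recorded in Corollary~\ref{cor:app:R:std}; since $s_{\bsgamma}$ is invariant under rescaling the weights, the exponent in \eqref{cond_exp_spt_std} is unchanged.
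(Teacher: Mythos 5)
Your proposal follows essentially the same route as the paper: eigenvalues of $W_s$ via Lemma~\ref{le:eigpair}, the trace estimate for the trivial notions of tractability, the Dolbeault--Krieg--Ullrich type inequality $e(c\,n,\APP_s;\Lambda^{{\rm std}})^2\le\frac1n\sum_{k\ge n}e(k,\APP_s;\Lambda^{{\rm all}})^2$ for the sharp SPT exponent, the trivial lower bound $\tau^{\ast}(\Lambda^{{\rm std}})\ge\tau^{\ast}(\Lambda^{{\rm all}})$, and the embeddings of Propositions~\ref{pr:r_rho} and~\ref{pr:r_psi} for the other two spaces (your direct derivation of PT/WT/UWT from $n(\varepsilon,\APP_s;\Lambda^{{\rm std}})\ll\tra(W_s)\varepsilon^{-2}$ replaces the paper's appeal to \cite[Theorems~26.11 and 26.13]{NW12}, but rests on the same trace bound). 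The one point needing care is your third step: the estimate $\lambda_{s,k}\ll k^{-1/\tau}$ yields a convergent tail $\sum_{k\ge c n}k^{-1/\tau}\ll n^{1-1/\tau}$ only when $\tau<1$, so in the boundary case $s_{\bsgamma}=1$ you must instead fall back on the trace bound $e(c\,n,\APP_s;\Lambda^{{\rm std}})^2\le\tra(W_s)/n$ from your second step, which gives exponent $2=2\max(1/\alpha,s_{\bsgamma})$ there; this is precisely the case distinction the paper makes, and with it your argument closes.
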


It suffices to prove the result for $R=r_{s,\alpha,\bsgamma}$. Our analysis will be based on relations between the minimal errors of $\Lambda^{\text{std}}$ and $\Lambda^{\text{all}}$, in particular on \cite[Theorem~1]{DKU} and on \cite[Theorem~1]{WW01} (see also \cite[Theorem~26.10]{NW12}).  These results provide that the trace of the operator $W_s$ from \eqref{def:Ws} is finite. 
Recall that the trace of $W_s$ is given by the sum of its eigenvalues, that is, 
\begin{align*}
	\tra(W_s)
	&=
	\sum_{j=1}^{\infty} \lambda_{s,j}
	=
	\sum_{\bsk \in \NN_0^s} r_{s,\alpha,\bsgamma}(\bsk)
	=
	\prod_{j=1}^s \left( 1 + \sum_{k=1}^\infty r_{\alpha,\gamma_j}(k) \right)
	\\
	&=
	\prod_{j=1}^s \left( 1 + \gamma_j \left( \sum_{k=1}^{\alpha-1 }\frac1{k!} + \sum_{k=\alpha}^\infty \frac{(k-\alpha)!}{k!} \right) \right)
	\\
	&\le
	\prod_{j=1}^s \left( 1 + \gamma_j \left( {\rm e} - 1 + \sum_{k=1}^\infty \frac{1}{k^\alpha}  \right) \right)	\\
	&=
	\prod_{j=1}^s \Big( 1 + \gamma_j \big( {\rm e} - 1  + \zeta(\alpha) \big) \Big)
	,	
\end{align*}
which is finite provided that $\alpha > 1$. Using Lemma~\ref{le:bdrk} we obtain in a similar way that
$$\tra(W_s) \ge \prod_{j=1}^s \big(1+\gamma_j \zeta(\alpha)\big)$$ and hence $\tra(W_s)$ is infinite if and only if $\alpha=1$. Note that in general there is no relation between the power of $\Lambda^{{\rm all}}$ and $\Lambda^{{\rm std}}$ whenever the trace of $W_s$ is infinite. For a discussion of this issue we refer to \cite[Section~26.3]{NW12}.

However, if $\alpha>1$ we obtain that there exists a positive constant 
$c(\alpha) \in [\zeta(\alpha), {\rm e} - 2 + \zeta(\alpha)]$ such that the trace of $W_s$ equals
\begin{equation}\label{fo:tra}
	\tra(W_s) 
	= 
	\prod_{j=1}^s \big( 1 + \gamma_j \, c(\alpha) \big)	
\end{equation}
and is finite for all $s \in \NN$.

\begin{proof}[Proof of Theorem~\ref{thm:tract-standard}]
According to Proposition~\ref{pr:r_rho} and \ref{pr:r_psi} in conjunction with Proposition~\ref{pr:embb:err} it suffices to proof the result for the Fourier weights $R=r_{s,\alpha,\bsgamma}$. 

Since $\alpha>1$ we know that $\tra(W_s)$ is finite for all $s \in \NN$. 
\begin{enumerate}
\item  For the proof we use \cite[Theorem~1]{DKU} from which we know that there exists a universal constant $c \in \NN$ such that for all $n \in \NN$ we have 
\begin{equation}\label{bd:DKU}
e(c\, n,\APP_s; \Lambda^{{\rm std}})^2 \le \frac{1}{n} \sum_{k=n}^{\infty} e(k,\APP_s;\Lambda^{{\rm all}})^2.
\end{equation}
Assume that $\sum_{j=1}^{\infty} \gamma_j < \infty$. Then, obviously, the sum exponent of the weight sequence $\bsgamma$ satisfies $s_{\bsgamma} \le 1$.
Assume first that $s_{\bsgamma}<1$. Then, according to Theorem~\ref{thm:main} we have SPT for $\Lambda^{{\rm all}}$ with exponent 
$$\tau^{\ast}(\Lambda^{{\rm all}}) =2 \max\left(\frac{1}{\alpha},s_{\bsgamma}\right)< 2.$$ 
Hence for every $\tau>\tau^{\ast}(\Lambda^{{\rm all}})$ there exists a $C>0$ such that $n(\varepsilon,\APP_s;\Lambda^{{\rm all}}) \le C \varepsilon^{-\tau}$ and from this we deduce $$e(k,\APP_s;\Lambda^{{\rm all}}) \le \frac{C}{k^{1/\tau}}.$$ Inserting into \eqref{bd:DKU} yields
\begin{eqnarray*}
e(c\, n,\APP_s; \Lambda^{{\rm std}})^2 & \le & \frac{C}{n} \sum_{k=n}^{\infty} \frac{1}{k^{2/\tau}}\\
& \le & \frac{C}{n} \int_{n-1}^{\infty} \frac{1}{x^{2/\tau}} \rd x \\
& = & \frac{C}{n} \frac{\tau}{2-\tau} \frac{1}{(n-1)^{2/\tau - 1}} \\
& \le & \frac{C\, \tau}{2-\tau} \frac{1}{(n-1)^{2/\tau}}.
\end{eqnarray*}
Hence there exists a number $a_{\tau}>0$ such that $$e(c\, n,\APP_s; \Lambda^{{\rm std}}) \le \frac{a_{\tau}}{n^{1/\tau}}.$$ 
This implies that $$n(\varepsilon,\APP_s;\Lambda^{{\rm std}}) \le \left\lceil c\, a_{\tau}^{\tau} \, \varepsilon^{-\tau}\right\rceil$$ and hence, since $\tau >\tau^{\ast}(\Lambda^{{\rm std}})$ was arbitrary, we have SPT with exponent $$\tau^{\ast}(\Lambda^{{\rm std}}) =  2 \max\left(\frac{1}{\alpha},s_{\bsgamma}\right).$$ (Note that trivially $\tau^{\ast}(\Lambda^{{\rm std}}) \ge \tau^{\ast}(\Lambda^{{\rm all}})=2 \max(1/\alpha,s_{\bsgamma})$.)

Now assume that $s_{\bsgamma}=1$. From \eqref{bd:DKU} and \eqref{errEW} we obtain
\begin{equation}\label{bd:DKU1}
e(c\, n,\APP_s; \Lambda^{{\rm std}})^2 \le \frac{1}{n} \sum_{k=n}^{\infty} \lambda_{s,k+1} \le \frac{1}{n} \sum_{j=1}^{\infty} \lambda_{s,j} = \frac{{\rm trace}(W_s)}{n}.
\end{equation}
Now we use \eqref{fo:tra}. For $\sum_{j=1}^{\infty}\gamma_j < \infty$ and $\alpha>1$ we have 
\begin{eqnarray*}
\tra(W_s) = \exp\left(\sum_{j=1}^s \ln(1+\gamma_j c(\alpha))\right)  \le \exp\left(c(\alpha) \sum_{j=1}^{\infty} \gamma_j\right)=:\Gamma< \infty.
\end{eqnarray*}
Hence, inserting into \eqref{bd:DKU1} gives
$$e(c\, n,\APP_s; \Lambda^{{\rm std}})^2 \le  \frac{\Gamma}{n}.$$ From this we obtain in the same way as above SPT with exponent $$\tau^{\ast}(\Lambda^{{\rm std}})=2= 2 \max\left(\frac{1}{\alpha},s_{\bsgamma}\right).$$ 

\item We will use \cite[Theorem~26.13]{NW12}.   Assume that the weights satisfy \eqref{cond_pt_std}. This implies that there exists a finite, positive $M$ such that $\frac1{\ln s} \sum_{j=1}^s \gamma_j < M$ for all $s$.
Then we have 
\begin{equation*}
\tra(W_s) \le 	\exp\left( c(\alpha) \sum_{j=1}^s \gamma_j \right) \le \exp\left(c(\alpha) M \ln s \right) = s^{c(\alpha) M}.
\end{equation*}
Furthermore, assumption \eqref{cond_pt_std} implies that
\begin{equation*}
\frac{s \,\gamma_s}{\ln s} \le \frac1{\ln s} \sum_{j=1}^s \gamma_j < M \quad \text{for all} \quad s \in \NN
\end{equation*}
and therefore $\gamma_j = \cO(j^{-1} \ln j)$ and in particular $s_{\bsgamma} = 1$. By the characterization in Theorem~\ref{thm:main} this implies that approximation is (S)PT for the class in $\Lambda^{\text{all}}$, i.e., there exist positive $C^{{\rm all}}$ and $p^{{\rm all}}$ such that
$$n(\varepsilon,\APP_s;\Lambda^{{\rm all}}) \le C^{{\rm all}} \varepsilon^{-p^{{\rm all}}} \quad \mbox{ for all $\varepsilon \in (0,1)$ and $s \in \NN$.}$$ Now \cite[Theorem 26.13]{NW12} implies the existence of a positive $C^{{\rm std}}$ such that $$n(\varepsilon,\APP_s;\Lambda^{{\rm std}}) \le C^{{\rm std}} \varepsilon^{-p^{{\rm std}}} s^{q^{{\rm std}}} \quad \mbox{ for all $\varepsilon \in (0,1)$ and $s \in \NN$,}$$ where $$p^{{\rm std}}=p^{{\rm all}}+2 \quad \mbox{ and }\quad q^{{\rm std}}= c(\alpha) M.$$ Hence we have PT also for the class $\Lambda^{{\rm std}}$.

\item[3.-5.] We prove the three statements in one combined argument.  If any of the three conditions \eqref{cond_wt_std}, \eqref{cond_tswt_std} or \eqref{cond_uwt_std} holds, then this implies that the weights $(\gamma_j)_{j\ge1}$ 
(which we assumed to be non-increasing) have to become less than $1$ eventually since otherwise, for every $\sigma \in (0,1]$,
\begin{equation*}
\lim_{s \to \infty} \frac1{s^\sigma} \sum_{j=1}^s \gamma_j = \lim_{s \to \infty} \frac{s}{s^\sigma} = \lim_{s \to \infty} s^{1-\sigma} \ge 1.
\end{equation*}
(Actually we even have $\bsgamma_I=0$.)
Therefore, we have by Theorem~\ref{thm:main} that UWT (and even QPT) holds for the class $\Lambda^{\text{all}}$. Furthermore, we observe that
\begin{align*}
\frac{\ln(\tra(W_s))}{s^\sigma} &= \frac1{s^\sigma} \ln \left( \prod_{j=1}^s (1 + \gamma_j \, c(\alpha)) \right) = \frac1{s^\sigma} \sum_{j=1}^s \ln(1 + \gamma_j \, c(\alpha)) \le \frac{c(\alpha)}{s^\sigma} \sum_{j=1}^s \gamma_j
\end{align*}   
and thus if $\frac1{s^\sigma} \sum_{j=1}^s \gamma_j$ converges to $0$ as $s$ goes to infinity, with $\sigma \in (0,1]$, then
\begin{equation} \label{eq:cond_thm_26}
\lim_{s \to \infty} \frac{\ln(\tra(W_s))}{s^\sigma} \le \lim_{s \to \infty} \frac{c(\alpha)}{s^\sigma} \sum_{j=1}^s \gamma_j = 0.
\end{equation}
Now we obtain with the same arguments as in the proof of  \cite[Theorem~26.11]{NW12} that \eqref{cond_wt_std} implies WT for the class $\Lambda^{\text{std}}$. The proof for the other two notions of WT can be obtained analogously by appropriately modifying the argument used in the proof of \cite[Theorem~26.11]{NW12}.
\end{enumerate}
The proof is complete.
\end{proof}

\begin{remark}\rm
It is obvious from \eqref{est:icomp_allstd} that the sufficient conditions for tractability for information from the class $\Lambda^{{\rm std}}$ are not weaker  than the sufficient conditions for the respective notion of tractability  for information from the class $\Lambda^{{\rm all}}$. For example SPT for the class $\Lambda^{{\rm all}}$ holds if $s_{\bsgamma}< \infty$, whereas the sufficient condition for SPT for the class $\Lambda^{{\rm std}}$ is $\sum_{j=1}^{\infty} \gamma_j<\infty$, which can be re-formulated in an equivalent way as $s_{\bsgamma} \le 1$.
\end{remark}

\begin{remark}\rm
Again from \eqref{est:icomp_allstd} it follows that every necessary condition for tractability for information from the class $\Lambda^{{\rm all}}$ is also necessary for the respective notion of tractability  for information from the class $\Lambda^{{\rm std}}$. Unfortunately these conditions do not match the sufficient conditions obtained from Theorem~\ref{thm:tract-standard}. However, it follows from the argument used in item 3 of the proof of Theorem~\ref{thm:main} that in the unweighted case, i.e., $\gamma_j=1$ for all $j \in \NN$, we have $$n(\varepsilon,\APP_s;\Lambda^{{\rm std}}) \ge n(\varepsilon,\APP_s;\Lambda^{{\rm all}}) \ge 2^s$$ and hence for the unweighted case the $L_2$-approximation problem for information from $\Lambda^{{\rm std}}$ suffers from the curse of dimensionality. 
\end{remark}

\begin{remark}\rm
While we have a very clear picture of tractability of $L_2$-approximation for the Hermite space $\cH_{r_{s,\alpha,\bsgamma}}$ for the information class $\Lambda^{{\rm all}}$ there remain several open questions concerning $\Lambda^{{\rm std}}$. In the first place, matching necessary conditions for the respective notions of tractability are still missing. Furthermore, we neither have sufficient nor necessary conditions for quasi-polynomial tractability beyond the sufficient condition for polynomial tractability which obviously also implies quasi-polynomial tractability. Finally, our results require a smoothness parameter $\alpha$ bigger than 1. Similar results for $\alpha=1$ are still missing.
\end{remark}

\section{Integration in weighted Hermite spaces}\label{sec:int}

Now we consider the integration problem. The next theorem states sufficient conditions for tractability of integration. Obviously, the information class $\Lambda^{{\rm all}}$ makes this problem trivial. For this reason we restrict to the class $\Lambda^{{\rm std}}$.

\begin{theorem}\label{thm:main:int}
Let $\alpha > 1$ and $\bsgamma$ be a sequence of weights. Consider multivariate integration $\INT = (\INT_s)_{s \ge 1}$ for the weighted Hermite spaces $\cH_R$, $R\in \{r_{s,\alpha,\bsgamma},\rho_{s,\alpha,\bsgamma},\psi_{s,\alpha,\bsgamma}\}$ for $s \in \NN$. Then we have the following sufficient conditions:
\begin{enumerate}
\item SPT holds if $$\sum_{j =1}^{\infty} \gamma_j < \infty$$ (which is equivalent to $s_{\bsgamma} \le 1$). In this case the exponent of SPT satisfies 
\begin{equation*}		
\tau^{\ast}(\Lambda^{\mathrm{std}}) \le 2 \max\left(\frac{1}{\alpha},s_{\bsgamma}\right) 
\end{equation*}
\item PT holds if 
\begin{equation*}	
\limsup_{s \to \infty} \frac1{\ln s} \sum_{j=1}^s \gamma_j < \infty.
\end{equation*}
\item WT holds if
\begin{equation*}
\lim_{s \to \infty} \frac1{s} \sum_{j=1}^s \gamma_j = 0.
\end{equation*}
\item For $\sigma\in (0,1]$ $(\sigma,\tau)$-WT holds if
\begin{equation*}
\lim_{s \to \infty} \frac1{s^\sigma} \sum_{j=1}^s \gamma_j = 0.
\end{equation*}
\item UWT holds if 
\begin{equation*}
\lim_{s \to \infty} \frac1{s^\sigma} \sum_{j=1}^s \gamma_j = 0 	\quad \text{for all } \sigma \in (0,1].
\end{equation*}
\end{enumerate}
\end{theorem}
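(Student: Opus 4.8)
The plan is to derive Theorem~\ref{thm:main:int} as an essentially immediate consequence of the $\Lambda^{\rm std}$-approximation result in Theorem~\ref{thm:tract-standard}, exploiting that integration is never harder than approximation with standard information. First I would reduce the three Fourier weights to the single case $R=r_{s,\alpha,\bsgamma}$. By Propositions~\ref{pr:r_rho} and~\ref{pr:r_psi} we have $\|f\|_{r_{s,\alpha,\bsgamma}}\le\|f\|_{\rho_{s,\alpha,\bsgamma}}$ and $\|f\|_{r_{s,\alpha,\bsgamma}}\le\|f\|_{\psi_{s,\alpha,\bsgamma}}$, so Proposition~\ref{pr:embb:err} (applied with $R_1=r_{s,\alpha,\bsgamma}$) yields $e(n,\INT_R)\le e(n,\INT_{r_{s,\alpha,\bsgamma}})$ and hence $n(\varepsilon,\INT_R)\le n(\varepsilon,\INT_{r_{s,\alpha,\bsgamma}})$ for $R\in\{\rho_{s,\alpha,\bsgamma},\psi_{s,\alpha,\bsgamma}\}$ and all $\varepsilon\in(0,1)$. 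Thus $r_{s,\alpha,\bsgamma}$ is the hardest of the three spaces for integration, and it suffices to establish every tractability statement for $R=r_{s,\alpha,\bsgamma}$.

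Next I would invoke the key relation~\eqref{vgl:n:int:app}, namely $n(\varepsilon,\INT_R)\le n(\varepsilon,\APP_R;\Lambda^{\rm std})$, which is a restatement of Proposition~\ref{pr:vglintapperr}. This bounds the integration complexity from above by the standard-information approximation complexity, so any upper bound on $n(\varepsilon,\APP_R;\Lambda^{\rm std})$ of the shape prescribed by a given notion of tractability automatically confers the same notion of tractability on $\INT_R$. It then only remains to feed in the five sufficient conditions from Theorem~\ref{thm:tract-standard}: $\sum_{j\ge1}\gamma_j<\infty$ gives SPT; $\limsup_{s\to\infty}(\ln s)^{-1}\sum_{j=1}^s\gamma_j<\infty$ gives PT; and the three conditions $\lim_{s\to\infty}s^{-\sigma}\sum_{j=1}^s\gamma_j=0$ transfer to WT, $(\sigma,\tau)$-WT, and UWT in exactly the same manner. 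For the SPT exponent, since $n(\varepsilon,\INT_R)\le n(\varepsilon,\APP_R;\Lambda^{\rm std})$ holds for every $n$ and $\varepsilon$ and the latter satisfies SPT with exponent $2\max(1/\alpha,s_{\bsgamma})$ by Theorem~\ref{thm:tract-standard}, the integration exponent inherits only the inequality $\tau^{\ast}(\Lambda^{\mathrm{std}})\le 2\max(1/\alpha,s_{\bsgamma})$, the chain providing no matching lower bound for integration.

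I do not expect a genuine obstacle here: all the analytic work (the eigenvalue and trace estimates leading to~\eqref{fo:tra}, and the applications of \cite[Theorem~1]{DKU} and \cite[Theorem~26.13]{NW12}) has already been carried out for Theorem~\ref{thm:tract-standard}. The only two points requiring care are (i) verifying that the embedding chain in the reduction step runs in the correct direction, so that bounds proved for $r_{s,\alpha,\bsgamma}$ really dominate those for $\rho_{s,\alpha,\bsgamma}$ and $\psi_{s,\alpha,\bsgamma}$, and (ii) recording that the hypothesis $\alpha>1$ is inherited from Theorem~\ref{thm:tract-standard}, since it is precisely what guarantees $\tra(W_s)<\infty$ and hence the applicability of the $\Lambda^{\rm std}$ machinery. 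No new estimate should be needed.
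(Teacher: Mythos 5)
Your proposal is correct and follows essentially the same route as the paper, which likewise deduces Theorem~\ref{thm:main:int} from Theorem~\ref{thm:tract-standard} via the relation $n(\varepsilon,\INT_R)\le n(\varepsilon,\APP_R;\Lambda^{{\rm std}})$ of \eqref{vgl:n:int:app}. The only difference is that your preliminary embedding reduction to $R=r_{s,\alpha,\bsgamma}$ is redundant, since Theorem~\ref{thm:tract-standard} is already stated for all three Fourier weights and \eqref{vgl:n:int:app} applies to each $\cH_R$ directly.
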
 

\begin{proof}
Using \eqref{vgl:n:int:app} one can transfer our results about tractability of the $L_2$-approximation problem for standard information to the integration problem in $\cH_R$.
\end{proof}

\begin{remark}\rm
Note that item~1 of the theorem yields an improvement over the upper bound on the exponent of SPT in \cite{IL} from 2 to $2 \max(1/\alpha,s_{\bsgamma})$.
\end{remark}

\begin{remark}\rm
Like for the approximation problem using exclusively standard information, also for the integration problem some questions remain open. These comprise of the quest for necessary conditions for the respective notions of tractability, for necessary and sufficient conditions for QPT and results for the case of smoothness $\alpha=1$.
\end{remark}

If we restrict ourselves to linear algorithms of the form 
\begin{equation}\label{alg:lin}
A_{n,s}^{{\rm int}}(f)=\sum_{i=1}^{n} w_i f(\bsx_i)
\end{equation}
with $n \in \NN$, nodes $\bsx_1,\bsx_2,\ldots,\bsx_n$ in $\RR^s$ and {\it non-negative} integration weights $w_1,w_2,\ldots,w_n$ we can show that the sufficient conditions for tractability are even necessary.  This method has been used by Sloan and Wo\'{z}niakowski in \cite{slowo01} in the context of numerical integration in Korobov spaces. 

We introduce a restricted version of the information complexity by taking into account only linear algorithms with non-negative weights. Define, for $\varepsilon \in (0,1)$ and $s \in \NN$, the quantity 
\begin{eqnarray*}
n^{{\rm lin, pos}}(\varepsilon, \INT_s):=\min\{n \in \NN  & : & \exists A_{n,s}^{{\rm int}}\ \mbox{of the form \eqref{alg:lin} with non-negative weights,}\\
&&\mbox{such that }\ e^{{\rm int}}(A_{n,s}^{{\rm int}},\cH_{r_{s,\alpha,\bsgamma}}) \le \varepsilon\}.
\end{eqnarray*}
Obviously, $n(\varepsilon, \INT_s) \le n^{{\rm lin, pos}}(\varepsilon, \INT_s)$.

\begin{theorem}\label{thm:nec_tract_int}
Let $\alpha \ge 1$ and $\bsgamma$ be a sequence of weights. Consider multivariate integration $\INT = (\INT_s)_{s \ge 1}$ for the weighted Hermite spaces $\cH_R$, $R \in \{r_{s,\alpha,\bsgamma},\rho_{s,\alpha,\bsgamma},\psi_{s,\alpha,\bsgamma}\}$ for $s \in \NN$, but restrict to  the class of linear algorithms of the form \eqref{alg:lin} with non-negative weights. Then we have the following necessary conditions:
\begin{enumerate}
\item SPT implies $$\sum_{j =1}^{\infty} \gamma_j < \infty.$$
\item PT implies 
\begin{equation*}	
\limsup_{s \to \infty} \frac1{\ln s} \sum_{j=1}^s \gamma_j < \infty.
\end{equation*}
\item WT implies
\begin{equation*}
\lim_{s \to \infty} \frac1{s} \sum_{j=1}^s \gamma_j = 0.
\end{equation*}
\item For $\sigma\in (0,1]$ $(\sigma,\tau)$-WT implies
\begin{equation*}
\lim_{s \to \infty} \frac1{s^\sigma} \sum_{j=1}^s \gamma_j = 0.
\end{equation*}
\item UWT implies
\begin{equation*}
\lim_{s \to \infty} \frac1{s^\sigma} \sum_{j=1}^s \gamma_j = 0 	\quad \text{for all } \sigma \in (0,1].
\end{equation*}
\end{enumerate}
\end{theorem}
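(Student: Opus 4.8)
The plan is to follow the positive-weight technique of Sloan and Woźniakowski \cite{slowo01}. First I would reduce everything to the space $\cH_{r_{s,\alpha,\bsgamma}}$: by Propositions~\ref{pr:r_rho} and~\ref{pr:r_psi} every rule has, in $\cH_{\rho_{s,\alpha,\bsgamma}}$ and $\cH_{\psi_{s,\alpha,\bsgamma}}$, a worst-case error at least as large as its error in $\cH_{r_{s,\alpha,\bsgamma/(t\alpha^\alpha)}}$ (with $t\in\{1,2\}$), via Proposition~\ref{pr:embb:err}; since replacing $\bsgamma$ by a fixed constant multiple changes none of the five conditions, it suffices to prove the necessary conditions for $R=r_{s,\alpha,\bsgamma}$ with an arbitrary admissible weight sequence. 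For a rule $A_{n,s}^{\rm int}$ of the form \eqref{alg:lin} with $w_i\ge 0$, the representer of $\INT$ is the constant function $1$ and that of $f\mapsto f(\bsx_i)$ is $K_{r_{s,\alpha,\bsgamma}}(\cdot,\bsx_i)$, so
\begin{equation*}
e^{\rm int}(A_{n,s}^{\rm int})^2 = \Big\|1-\sum_{i=1}^n w_i K_{r_{s,\alpha,\bsgamma}}(\cdot,\bsx_i)\Big\|_{r_{s,\alpha,\bsgamma}}^2 = 1 - 2W + \sum_{i,j=1}^n w_iw_j K_{r_{s,\alpha,\bsgamma}}(\bsx_i,\bsx_j),
\end{equation*}
where $W:=\sum_i w_i$. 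Testing against $f\equiv 1$ gives $|1-W|\le e^{\rm int}(A_{n,s}^{\rm int})$, so for a rule with error $\le\varepsilon<1$ we may assume $W\in[1-\varepsilon,1+\varepsilon]$.

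The heart of the matter is a configuration-independent lower bound of the form
\begin{equation*}
n^{\rm lin,pos}(\varepsilon,\INT_s) \ge (1-\varepsilon^2)\prod_{j=1}^s(1+m_\alpha\gamma_j),
\end{equation*}
with a constant $m_\alpha>0$ depending only on $\alpha$. The relevant quantity is a pointwise lower bound on the diagonal of the kernel: writing $\omega_\alpha(x):=\sum_{k\ge 1}r_{\alpha,1}(k)\,H_k(x)^2$ (which is independent of $\gamma$ because $r_{\alpha,\gamma}(k)/\gamma=r_{\alpha,1}(k)$ for $k\ge 1$) we have $K_{r_{\alpha,\gamma}}(x,x)=1+\gamma\,\omega_\alpha(x)$ and hence
\begin{equation*}
K_{r_{s,\alpha,\bsgamma}}(\bsx,\bsx)=\prod_{j=1}^s\big(1+\gamma_j\,\omega_\alpha(x_j)\big)\ge\prod_{j=1}^s(1+m_\alpha\gamma_j),\qquad m_\alpha:=\inf_{x\in\RR}\omega_\alpha(x).
\end{equation*}
A short argument shows $m_\alpha>0$: $\omega_\alpha$ is continuous and strictly positive on $\RR$ (the $H_k$ with $k\ge 1$ cannot vanish simultaneously) and $\omega_\alpha(x)\to\infty$ as $|x|\to\infty$, so its infimum is attained and positive. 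This bound is available for every $\alpha\ge 1$, which is why the theorem, unlike Theorem~\ref{thm:main:int}, does not require $\alpha>1$.

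The difficult step — and the place where the restriction to non-negative weights is essential — is converting this diagonal bound into the node count. Since $\sum_{i,j}w_iw_j K_{r_{s,\alpha,\bsgamma}}(\bsx_i,\bsx_j)=\big\|\sum_i w_i K_{r_{s,\alpha,\bsgamma}}(\cdot,\bsx_i)\big\|^2\ge W^2$, the naive quadratic-form estimate only yields $e^{\rm int}(A)^2\ge(1-W)^2$, which is useless. The obstruction is that the Hermite kernel is \emph{not} sign-definite: $K_{r_{\alpha,\gamma}}(x,y)=1+\gamma\sum_{k\ge 1}r_{\alpha,1}(k)H_k(x)H_k(y)$ takes negative values, so off-diagonal cross-terms can cancel the large diagonal, and a single Hermite test function $H_{\bsk}$ may well be integrated exactly by a symmetric node set. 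To defeat this I would, following Sloan and Woźniakowski, construct a family of non-negative product fooling functions $g=\prod_{j}(1+\beta_j\phi(x_j))\ge 0$ with $\INT(g)=1$, chosen so that $g$ is large only on a set of small Gaussian measure; because $w_i\ge 0$, one has $\sum_i w_i g(\bsx_i)\ge 0$, and this sum can be pushed close to $1$ only if enough of the $n$ nodes fall into these small ``spike'' regions. Combining the resulting covering/counting estimate with the diagonal lower bound should produce the displayed bound. I expect this construction, together with the bookkeeping that yields exactly the factor $\prod_j(1+m_\alpha\gamma_j)$, to be the main technical obstacle.

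Finally, granted the lower bound, the five necessary conditions follow by routine asymptotics, using $\prod_{j=1}^s(1+m_\alpha\gamma_j)\ge\exp\big(c_\alpha\sum_{j=1}^s\gamma_j\big)$ for a suitable $c_\alpha>0$ (from $\log(1+m_\alpha t)\ge c_\alpha t$ on $t\in(0,1]$) and fixing any $\varepsilon_0\in(0,1)$. Indeed SPT forces $\exp\big(c_\alpha\sum_{j\le s}\gamma_j\big)\le C\varepsilon_0^{-\tau}/(1-\varepsilon_0^2)$ uniformly in $s$, i.e.\ $\sum_j\gamma_j<\infty$; PT forces $c_\alpha\sum_{j\le s}\gamma_j\le \sigma\log s+O(1)$, i.e.\ $\limsup_s(\log s)^{-1}\sum_{j\le s}\gamma_j<\infty$; and taking logarithms in the definitions of WT, $(\sigma,\tau)$-WT and UWT turns $\log n^{\rm lin,pos}(\varepsilon_0,\INT_s)\ge c_\alpha\sum_{j\le s}\gamma_j+\log(1-\varepsilon_0^2)$ directly into $s^{-\sigma}\sum_{j\le s}\gamma_j\to 0$ for the respective ranges of $\sigma$. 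These match the conditions of Theorem~\ref{thm:main:int}, so under the positive-weight restriction the sufficient conditions there become necessary as well.
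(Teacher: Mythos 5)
Your reduction to $R=r_{s,\alpha,\bsgamma}$ (via Propositions~\ref{pr:r_rho}, \ref{pr:r_psi} and \ref{pr:embb:err}, together with the scale-invariance of the five weight conditions) is sound, and your final asymptotic deductions from the product lower bound are correct and match the paper's. You have also correctly identified the crux: the Hermite kernel $K_{r_{s,\alpha,\bsgamma}}$ is not pointwise non-negative, so the off-diagonal terms in $\sum_{i,\ell}w_iw_\ell K_{r_{s,\alpha,\bsgamma}}(\bsx_i,\bsx_\ell)$ cannot simply be discarded and the naive quadratic-form estimate fails. But at exactly this point your argument stops: you propose a family of non-negative product fooling functions and a covering/counting estimate, and then state that carrying this out is ``the main technical obstacle''. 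That obstacle is the entire content of the proof, so the key inequality $n^{{\rm lin,pos}}(\varepsilon,\INT_s)\ge(1-\varepsilon^2)\prod_{j=1}^s(1+m_\alpha\gamma_j)$ is asserted but not established, and it is far from clear that a spike-function construction would deliver the product factor $\prod_{j=1}^s(1+m_\alpha\gamma_j)$ rather than some weaker quantity. This is a genuine gap.

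The paper's resolution (Proposition~\ref{pr:lowbd_int}) avoids the fooling-function route entirely: instead of fighting the sign changes of $K_{r_{s,\alpha,\bsgamma}}$, one embeds the auxiliary Hermite space $\cH_{\phi_{s,\omega,\bsgamma}}$ with exponential Fourier weights $\phi_{\omega,\gamma}(k)=\gamma\omega^k$ and $\omega=3^{-\alpha/3}$ into $\cH_{r_{s,\alpha,\bsgamma}}$, which is possible for every $\alpha\ge 1$ since $k^{-\alpha}\ge\omega^k$ (this, rather than your diagonal bound on $\omega_\alpha$, is why the theorem does not require $\alpha>1$). By Mehler's formula the kernel $K_{\phi_{s,\omega,\bsgamma}}$ \emph{is} pointwise non-negative, so for non-negative $w_i$ the off-diagonal terms can be dropped; the diagonal satisfies $K_{\phi_{s,\omega,\bsgamma}}(\bsx_i,\bsx_i)\ge\prod_{j=1}^s(1+\gamma_jc_\omega)$ with $c_\omega=(1-\sqrt{1-\omega^2})/\sqrt{1-\omega^2}$, and Cauchy--Schwarz applied to $\sum_iw_i$ followed by minimization over $\beta=(\sum_iw_i^2)^{1/2}$ yields the stated lower bound in $\cH_{\phi_{s,\omega,\bsgamma}}$, which transfers to $\cH_{r_{s,\alpha,\bsgamma}}$ because the embedding has norm at most $1$. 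To complete your write-up you should either supply this embedding-plus-Mehler argument or fully execute your fooling-function construction; the latter is not routine.
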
 

The proof of Theorem~\ref{thm:nec_tract_int} is based on the following proposition, which is an analogy to \cite[Theorem~4]{slowo01}, that  applies to Korobov spaces.

\begin{proposition}\label{pr:lowbd_int}
For every linear algorithm $A_{n,s}^{{\rm int}}$ of the form \eqref{alg:lin} with non-negative integration weights we have $$\left(e^{{\rm int}}(A_{n,s}^{{\rm int}},\cH_{r_{s,\alpha,\bsgamma}})\right)^2 \ge 1-\frac{n}{\prod_{j=1}^s \left(1+\gamma_j c_{\omega}\right)},$$ where $c_{\omega}:=(1-\sqrt{1-\omega^2})/\sqrt{1-\omega^2}>0$ and $\omega:=3^{-\alpha/3}$. In particular, $$n^{{\rm lin, pos}}(\varepsilon, \INT_s) \ge (1-\varepsilon^2) \prod_{j=1}^s (1+\gamma_j c_\omega).$$
\end{proposition}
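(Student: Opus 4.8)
The plan is to follow the Sloan--Woźniakowski strategy from \cite{slowo01}: express the squared worst-case error via representers, replace the Fourier weights by a smaller \emph{geometric} weight whose Hermite kernel is a Mehler kernel, and then use the non-negativity of the integration weights to discard all off-diagonal contributions.

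First I would record the standard reproducing-kernel formula for the squared worst-case error. Since $\int_{\RR^s} H_{\bsk}(\bsx)\varphi_s(\bsx)\rd\bsx=\delta_{\bsk,\bszero}$, the representer of $\INT_{r_{s,\alpha,\bsgamma}}$ is the constant function $1$, while the representer of $A_{n,s}^{{\rm int}}$ is $\sum_{i=1}^n w_i K_{r_{s,\alpha,\bsgamma}}(\cdot,\bsx_i)$. Expanding $\|\,1-\sum_i w_i K_{r_{s,\alpha,\bsgamma}}(\cdot,\bsx_i)\|_{r_{s,\alpha,\bsgamma}}^2$ yields
$$\left(e^{{\rm int}}(A_{n,s}^{{\rm int}})\right)^2 = 1 - 2\sum_{i=1}^n w_i + \sum_{i,j=1}^n w_i w_j\, K_{r_{s,\alpha,\bsgamma}}(\bsx_i,\bsx_j).$$
Writing the kernel in the Hermite basis and interchanging the finite node sums with the absolutely convergent series (absolute convergence holds by Cramér's bound, exactly as in the well-definedness argument for $K_{r_{s,\alpha,\bsgamma}}$ in Section~\ref{sec:ANOVA}), I obtain $\sum_{i,j} w_iw_j K_{r_{s,\alpha,\bsgamma}}(\bsx_i,\bsx_j)=\sum_{\bsk\in\NN_0^s} r_{s,\alpha,\bsgamma}(\bsk)\big(\sum_i w_i H_{\bsk}(\bsx_i)\big)^2$, a sum of non-negative terms.

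The central step is to shrink the Fourier weights. Set $\widetilde r(0):=1$ and $\widetilde r(k):=\gamma\omega^k$ for $k\ge 1$ with $\omega=3^{-\alpha/3}$, and $\widetilde R(\bsk):=\prod_{j}\widetilde r_{\gamma_j}(k_j)$. The value of $\omega$ is forced by requiring $\gamma\omega^k\le \gamma k^{-\alpha}\le r_{\alpha,\gamma}(k)$ (the lower bound of Lemma~\ref{le:bdrk}), i.e.\ $\omega\le k^{-\alpha/k}$ for all $k\ge 1$; since $(\ln k)/k$ is maximal at $k=3$ over the positive integers, the smallest admissible bound is $3^{-\alpha/3}$. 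As every summand is non-negative and $r_{s,\alpha,\bsgamma}(\bsk)\ge\widetilde R(\bsk)$ termwise, shrinking the weights only decreases the sum, so
$$\sum_{i,j=1}^n w_iw_j K_{r_{s,\alpha,\bsgamma}}(\bsx_i,\bsx_j)\ \ge\ \sum_{i,j=1}^n w_iw_j\,\widetilde K(\bsx_i,\bsx_j),\qquad \widetilde K(\bsx,\bsy)=\prod_{j=1}^s\big(1-\gamma_j+\gamma_j M_\omega(x_j,y_j)\big),$$
where $M_\omega(x,y)=\sum_{k\ge0}\omega^k H_k(x)H_k(y)$ is the Mehler kernel. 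By Mehler's formula $M_\omega(x,y)=(1-\omega^2)^{-1/2}\exp\!\big(-\tfrac{\omega^2(x^2+y^2)-2\omega xy}{2(1-\omega^2)}\big)>0$, so each factor, and hence $\widetilde K$, is strictly positive. This is where the positivity of the weights is used: since $w_iw_j\ge0$ and $\widetilde K(\bsx_i,\bsx_j)>0$, I may drop all off-diagonal terms and keep only $\sum_i w_i^2\,\widetilde K(\bsx_i,\bsx_i)$. On the diagonal, $M_\omega(x,x)=(1-\omega^2)^{-1/2}e^{\omega x^2/(1+\omega)}\ge(1-\omega^2)^{-1/2}=1+c_\omega$, so $\widetilde K(\bsx_i,\bsx_i)\ge\prod_{j=1}^s(1+\gamma_j c_\omega)=:M$.

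Combining the estimates gives $(e^{{\rm int}})^2\ge 1-2\sum_i w_i+M\sum_i w_i^2$, and completing the square via $Mw_i^2-2w_i=M(w_i-M^{-1})^2-M^{-1}$ yields $(e^{{\rm int}})^2\ge 1-n/M$, which is the claimed kernel inequality. The complexity bound is then immediate: if $e^{{\rm int}}(A_{n,s}^{{\rm int}})\le\varepsilon$ for some rule with non-negative weights and $n$ nodes, then $\varepsilon^2\ge 1-n/M$, i.e.\ $n\ge(1-\varepsilon^2)M$, so $n^{{\rm lin,pos}}(\varepsilon,\INT_s)\ge(1-\varepsilon^2)\prod_{j=1}^s(1+\gamma_j c_\omega)$. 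The main obstacle is not any single calculation but the conceptual combination: recognizing that the quadratic form is a termwise non-negative Hermite series (so the Fourier weights may be shrunk to a geometric one), pinning down $\omega=3^{-\alpha/3}$, and observing that the resulting Mehler kernel is pointwise positive so that non-negativity of the weights kills the off-diagonal; everything afterwards is completing the square.
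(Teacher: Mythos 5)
Your proposal is correct and follows essentially the same route as the paper's proof: shrink the Fourier weights to the geometric weights $\gamma\omega^k$ with $\omega=3^{-\alpha/3}$ (forced by the lower bound of Lemma~\ref{le:bdrk}), apply Mehler's formula to obtain a pointwise non-negative kernel, use the non-negativity of the $w_i$ to discard the off-diagonal terms, bound the diagonal below by $\prod_{j=1}^s(1+\gamma_j c_\omega)$, and optimize. The paper justifies the weight-shrinking step via the continuous embedding of the auxiliary space $\cH_{\phi_{s,\omega,\bsgamma}}$ into $\cH_{r_{s,\alpha,\bsgamma}}$ rather than your termwise-positivity argument for the Hermite-expanded quadratic form, and it concludes with Cauchy--Schwarz plus minimization over $\beta=(\sum_{i=1}^n w_i^2)^{1/2}$ rather than your per-term completion of the square, but these variants are equivalent and yield the identical bound.
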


\begin{proof}
We define a further reproducing kernel Hilbert space based on Hermite polynomials. For $\omega \in (0,1)$ we let  $$K_{\phi_{s,\omega,\bsgamma}}(\bsx,\bsy):= \sum_{\bsk \in \NN_0^s} \phi_{s,\omega,\bsgamma}(\bsk) H_{\bsk}(\bsx) H_{\bsk}(\bsy),$$ where now the used Fourier weights are $R(\bsk)=\phi_{s,\omega,\bsgamma}(\bsk) := \prod_{j=1}^s \phi_{\omega,\gamma_j}(k_j)$ with 
\begin{equation*}
	\phi_{\omega,\gamma}(k)
	:=
	\left\{\begin{array}{ll}
		1 & \text{for } k=0, \\[0.5em]
		\gamma \omega^k  & \text{for } k \ge 1.
	\end{array}\right.
\end{equation*}
Let $\cH_{\phi_{s,\omega,\bsgamma}}$ denote the corresponding reproducing kernel Hilbert space with inner product and norm 
\begin{equation*}
	\langle f,g \rangle_{\cH_{\phi_{s,\omega,\bsgamma}}}
	:=
	\sum_{\bsk \in \NN_0^s} \frac1{\phi_{s,\omega,\bsgamma}(\bsk)} \, \widehat{f}(\bsk) \, \widehat{g}(\bsk)
	\quad \text{and} \quad
	\|f\|_{\phi_{s,\omega,\bsgamma}}
	=
	\sqrt{\langle f,f \rangle_{\cH_{\phi_{s,\omega,\bsgamma}}}}
	,
\end{equation*}
respectively. The space $\cH_{\phi_{s,\omega,\bsgamma}}$ and integration therein has been studied already in \cite{IL}. In particular, in \cite[Proposition~3.7]{IL}  it is shown that the functions from the space $\cH_{\phi_{s,\omega,\bsgamma}}$ are analytic functions.

For $\alpha \ge 1$ choose $\omega=\omega(\alpha) \in (0,1)$ such that we have $$\frac{1}{k^{\alpha}} \ge \omega^{k} \quad \mbox{for all $k \in \NN$.}$$ For example $\omega:=\min_{k \ge 1} k^{-\alpha/k}=3^{-\alpha/3}$ is a suitable choice. Then we have $$r_{s,\alpha,\bsgamma}(\bsk) \ge \phi_{s,\omega,\gamma}(\bsk)  \quad \mbox{for all $\bsk \in \NN_0^s$}$$ and hence, for every $f \in \cH_{\phi_{s,\omega,\bsgamma}}$ we have $$\|f\|_{r_{s,\alpha,\bsgamma}} \le \|f\|_{\phi_{s,\omega,\bsgamma}}.$$ This shows that the space $\cH_{\phi_{s,\omega,\bsgamma}}$ is continuously embedded in the space $\cH_{r_{s,\alpha,\bsgamma}}$ and the norm of the embedding operator is at most 1. Therefore, integration in $\cH_{\phi_{s,\omega,\bsgamma}}$ is not harder than in the space $\cH_{r_{s,\alpha,\bsgamma}}$. This implies that for every algorithm, and we restrict ourselves to linear algorithms $A_{n,s}^{{\rm int}}$ with non-negative integration weights like in \eqref{alg:lin} in the following,  the integration errors in $\cH_{\phi_{s,\omega,\bsgamma}}$ and in $\cH_{r_{s,\alpha,\bsgamma}}$, respectively, are related as $$e^{{\rm int}}(A_{n,s}^{{\rm int}},\cH_{\phi_{s,\omega,\bsgamma}}) \le e^{{\rm int}}(A_{n,s}^{{\rm int}},\cH_{r_{s,\alpha,\bsgamma}}).$$ 

Now we consider $e^{{\rm int}}(A_{n,s}^{{\rm int}},\cH_{\phi_{s,\omega,\bsgamma}})$. Using a well-known formula for the squared integration error of linear algorithms in reproducing kernel Hilbert spaces (see, e.g., \cite[Exercise~3.15]{LP14}) we obtain
\begin{eqnarray*}
\left(e^{{\rm int}}(A_{n,s}^{{\rm int}},\cH_{\phi_{s,\omega,\bsgamma}})\right)^2 & = & \int_{\RR^s} \int_{\RR^s} K_{\phi_{s,\omega,\bsgamma}} (\bsx,\bsy) \varphi_s(\bsx)\varphi_s(\bsy) \rd \bsx \rd \bsy\\
&& - 2 \sum_{i=1}^n w_i \int_{\RR^s} K_{\phi_{s,\omega,\bsgamma}} (\bsx,\bsx_i) \varphi_s(\bsx) \rd \bsx \\
&&+\sum_{i,\ell=1}^n w_i w_{\ell} K_{\phi_{s,\omega,\bsgamma}} (\bsx_i,\bsx_{\ell}).
\end{eqnarray*}
It is easy to see (or consult \cite[p.~191]{IL}) that $$\int_{\RR^s} \int_{\RR^s} K_{\phi_{s,\omega,\bsgamma}} (\bsx,\bsy) \varphi_s(\bsx)\varphi_s(\bsy) \rd \bsx \rd \bsy=1$$ and $$\int_{\RR^s} K_{\phi_{s,\omega,\bsgamma}} (\bsx,\bsx_i) \varphi_s(\bsx) \rd \bsx=1 \quad \mbox{for all $i \in \{1,2,\ldots,n\}$.}$$ Therefore we obtain
\begin{eqnarray}\label{fo:err:lin1}
\left(e^{{\rm int}}(A_{n,s}^{{\rm int}},\cH_{\phi_{s,\omega,\bsgamma}})\right)^2  =1 - 2 \sum_{i=1}^n w_i  +\sum_{i,\ell=1}^n w_i w_{\ell} K_{\phi_{s,\omega,\bsgamma}} (\bsx_i,\bsx_{\ell}).
\end{eqnarray}
From Mehler's formula (see \cite{szeg}), which states that for every $x,y\in \RR$ and every $\omega \in (-1,1)$ we have
\[
\sum_{k=0}^\infty H_k(x)H_k(y) \omega^k
=\frac{1}{\sqrt{1-\omega^2}}\exp\left(\frac{\omega x y }{1+\omega}-\frac{\omega^2 (x-y)^2 }{2(1-\omega^2)}\right),
\]
one can derive that 
$$K_{\phi_{s,\omega,\bsgamma}}(\bsx,\bsy)=\prod_{j=1}^s \left(1-\gamma_j+\gamma_j \frac{1}{\sqrt{1-\omega^2}} \exp\left(\frac{\omega x_j y_j}{1+\omega} - \frac{\omega^2 (x_j-y_j)^2}{2(1-\omega^2)} \right) \right).$$
This shows, in particular, that the kernel $K_{\phi_{s,\omega,\bsgamma}}$ is non-negative. Since also the integration weights $w_i$ are non-negative, we deduce from \eqref{fo:err:lin1} by neglecting the non-diagonal terms in the double-sum that
\begin{eqnarray}\label{fo:err:lin2}
\left(e^{{\rm int}}(A_{n,s}^{{\rm int}},\cH_{\phi_{s,\omega,\bsgamma}})\right)^2  \ge 1 - 2 \sum_{i=1}^n w_i  +\sum_{i=1}^n w_i^2 K_{\phi_{s,\omega,\bsgamma}} (\bsx_i,\bsx_i).
\end{eqnarray}
Now, for $i \in \{1,2,\ldots,n\}$ we have
\begin{eqnarray*}
K_{\phi_{s,\omega,\bsgamma}}(\bsx_i,\bsx_i) & = &\prod_{j=1}^s \left(1-\gamma_j+\gamma_j \frac{1}{\sqrt{1-\omega^2}} \exp\left(\frac{\omega x_{i,j}^2}{1+\omega}\right) \right)\\
& \ge & \prod_{j=1}^s \left(1-\gamma_j+\gamma_j \frac{1}{\sqrt{1-\omega^2}}\right)\\
& = & \prod_{j=1}^s \left(1+\gamma_j c_{\omega}\right),
\end{eqnarray*}
where $x_{i,j}$ is the $j$-th component of $\bsx_i$ and $c_{\omega}:=(1-\sqrt{1-\omega^2})/\sqrt{1-\omega^2}>0$. Inserting this estimate into \eqref{fo:err:lin2} we get
\begin{eqnarray}\label{fo:err:lin3}
\left(e^{{\rm int}}(A_{n,s}^{{\rm int}},\cH_{\phi_{s,\omega,\bsgamma}})\right)^2  \ge 1 - 2 \sum_{i=1}^n w_i  +\sum_{i=1}^n w_i^2 \prod_{j=1}^s \left(1+\gamma_j c_{\omega}\right).
\end{eqnarray}
Next, set $\beta:=(\sum_{i=1}^n w_i^2)^{1/2}$ and observe that by the Cauchy-Schwarz inequality we have $\sum_{i=1}^n w_i \le \sqrt{n} \beta$. Thus we my conclude from \eqref{fo:err:lin3} that 
\begin{eqnarray}\label{fo:err:lin4}
\left(e^{{\rm int}}(A_{n,s}^{{\rm int}},\cH_{\phi_{s,\omega,\bsgamma}})\right)^2  \ge 1 - 2 \sqrt{n} \beta + \beta^2  \prod_{j=1}^s \left(1+\gamma_j c_{\omega}\right).
\end{eqnarray}
Minimizing the expression on the right-hand side of \eqref{fo:err:lin4} with respect to $\beta$ we obtain that $$\left(e^{{\rm int}}(A_{n,s}^{{\rm int}},\cH_{\phi_{s,\omega,\bsgamma}})\right)^2  \ge 1-\frac{n}{\prod_{j=1}^s \left(1+\gamma_j c_{\omega}\right)}.$$ From here the upper bound on $n^{{\rm lin, pos}}(\varepsilon, \INT_s)$ follows immediately.
\end{proof}

Now we can give the proof of Theorem~\ref{thm:nec_tract_int}.

\begin{proof}[Proof of Theorem~\ref{thm:nec_tract_int}]
Again it suffices to prove the result for $R=r_{s,\alpha,\bsgamma}$. We will use Proposition~\ref{pr:lowbd_int} and arguments from \cite[Proof of Theorem~5]{slowo01}. 

Assume that the weights are bounded from below by some positive number $\gamma_\ast$, i.e. $\gamma_j \ge \gamma_\ast>0$ for all $j \in \NN$. Then it follows from Proposition~\ref{pr:lowbd_int} that $$n^{{\rm lin, pos}}(\varepsilon, \INT_s) \ge (1-\varepsilon^2) (1+\gamma_\ast c_\omega)^s.$$ Thus $n^{{\rm lin, pos}}(\varepsilon, \INT_s)$ grows exponentially fast in $s$ and hence we cannot have any form of tractability. Thus, if we have some form of tractability, then we must also have  $\lim_{j \rightarrow \infty}\gamma_j=0$.

Now suppose that we have $\lim_{j \rightarrow \infty}\gamma_j=0$ but $\sum_{j=1}^{\infty}\gamma_j = \infty$. For $\lim_{j \rightarrow \infty}\gamma_j=0$ it is a well-known fact that
\begin{equation}\label{eq:equivgrweigt}
\prod_{j=1}^s(1+\gamma_j c_\omega)= \Theta\left(\exp\left(c_\omega \sum_{j=1}^s \gamma_j\right)\right).
\end{equation}
Then it follows from Proposition~\ref{pr:lowbd_int} and Equation~\eqref{eq:equivgrweigt} that $\lim_{s \rightarrow \infty}n^{{\rm lin, pos}}(\varepsilon, \INT_s) = \infty$ and this contradicts SPT. Thus $\sum_{j=1}^{\infty}\gamma_j < \infty$ is a necessary condition for SPT.

Suppose next that we have $\lim_{j \rightarrow \infty}\gamma_j=0$ but $\limsup_{s \rightarrow \infty} (1/\log s) \sum_{j=1}^s \gamma_j=\infty$. Since $$\prod_{j=1}^s(1+\gamma_j c_\omega)= \Theta\left(s^{c_\omega (1/\log s) \sum_{j=1}^s \gamma_j} \right),$$ it follows from Proposition~\ref{pr:lowbd_int} that $n^{{\rm lin, pos}}(\varepsilon, \INT_s)$ goes to infinity faster than any power of $s$ and this contradicts PT. Thus  $\limsup_{s \rightarrow \infty} (1/\log s) \sum_{j=1}^s \gamma_j< \infty$ is a necessary condition for PT.

Finally, assume that for $\sigma \in (0,1]$ we have $$\lim_{s+\varepsilon^{-1} \rightarrow \infty} \frac{\log n^{{\rm lin, pos}}(\varepsilon, \INT_s)}{s^{\sigma}+\varepsilon^{-\tau}}=0.$$ Then it follows from Proposition~\ref{pr:lowbd_int} and Equation~\eqref{eq:equivgrweigt}  that  $$\lim_{s \rightarrow \infty}\frac{1}{s^{\sigma}} \sum_{j=1}^s \gamma_j =0.$$ This implies the necessary conditions for the three WT notions. 
\end{proof}

\section{Remarks on integration in the anchored space}

In \cite{WW02} Wasilkowski and Wo\'{z}niakowski studied $L_2$-approximation and integration over unbounded domains. The underlying function space in this work is a more general version of the reproducing kernel Hilbert space with kernel $L$ from \eqref{prop:decomposable}. Choosing $\psi=\varphi^{1/2}$ and $\omega=\varphi$ in \cite{WW02} corresponds exactly to the setting of the present work.    

Unfortunately, the results from \cite{WW02} concerning tractability of $L_2$-approximation cannot be transferred to our setting here (see Section~\ref{subsec:anch}), since \cite[Theorems~1 and 2]{WW02} require the assumption \cite[Eq.~(17)]{WW02} which is $\int_{\RR} (\sqrt{\omega(x)}/\psi(x))^{1/\alpha} \rd x < \infty$, but which is obviously not satisfied in our case where $\psi=\varphi^{1/2}$ and $\omega=\varphi$.

The results about integration in \cite{WW02} do not require this assumption.  This means we can transfer them directly into our setting in order to obtain ``if and only if''-conditions for numerical integration in the anchored space $\cH_{\pitchfork,s,\alpha,\bsgamma}$. This has already been done in  \cite[Sec.~12.5.1]{NW10} (in a slightly different but equivalent formulation). The following result is basically \cite[Corollary~12.8]{NW10} (which we extend by results about $(\sigma,\tau)$-WT and UWT). We stress that here we also have necessary conditions thanks to the fact that the kernel $K_{\pitchfork,\alpha,\gamma}$ contains the decomposable part $L$.

\begin{theorem}
Let $\alpha > 1$ and $\bsgamma$ be a sequence of weights. Consider multivariate integration $\INT = (\INT_s)_{s \ge 1}$ for the weighted anchored spaces $\cH_{\pitchfork,s,\alpha,\bsgamma}$ for $s \in \NN$.  Then we have:
\begin{enumerate}
\item SPT holds if and only if $$\sum_{j =1}^{\infty} \gamma_j < \infty.$$
\item PT holds if and only if 
\begin{equation*}	
\limsup_{s \to \infty} \frac1{\ln s} \sum_{j=1}^s \gamma_j < \infty.
\end{equation*}
\item WT holds if and only if 
\begin{equation*}
\lim_{s \to \infty} \frac1{s} \sum_{j=1}^s \gamma_j = 0.
\end{equation*}
\item For $\sigma\in (0,1]$ $(\sigma,\tau)$-WT holds if and only if 
\begin{equation*}
\lim_{s \to \infty} \frac1{s^\sigma} \sum_{j=1}^s \gamma_j = 0.
\end{equation*}
\item UWT holds if and only if 
\begin{equation*}
\lim_{s \to \infty} \frac1{s^\sigma} \sum_{j=1}^s \gamma_j = 0 	\quad \text{for all } \sigma \in (0,1].
\end{equation*}
\end{enumerate}
\end{theorem}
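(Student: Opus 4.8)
The plan is to recognize the stated result as an instance of the integration theory for reproducing kernels with a \emph{decomposable} part, developed by Wasilkowski and Wo\'zniakowski in \cite{WW02} and reformulated in \cite[Sec.~12.5.1]{NW10}. As observed after Proposition~\ref{pr:rep_anch}, the kernel $K_{\pitchfork,\alpha,\gamma}$ splits into a finite-rank polynomial part $K_1$ plus the part $L$ of \eqref{prop:decomposable}, and $L$ is decomposable at $0$. Since choosing $\psi=\varphi^{1/2}$ and $\omega=\varphi$ makes $\cH_{\pitchfork,s,\alpha,\bsgamma}$ coincide with the setting of \cite{WW02}, both the upper- and the lower-bound machinery of that paper apply directly. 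I would also stress up front that, by the remark following Proposition~\ref{pr:rep_anch}, the anchored norm is \emph{not} comparable pointwise to $\|\cdot\|_{r_{s,\alpha,\bsgamma}}$, so the results cannot be obtained by transferring through Proposition~\ref{pr:embb:err}; the argument must exploit the decomposable structure instead.

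For the sufficiency direction I would invoke the constructive upper bounds of \cite{WW02} (equivalently the ``if'' half of \cite[Corollary~12.8]{NW10}). The governing quantity is the product $D_s:=\prod_{j=1}^s(1+\gamma_j\kappa)$ for a constant $\kappa=\kappa(\alpha)>0$ coming from the diagonal of $K_{\pitchfork,\alpha,\gamma}$, and one has $n(\varepsilon,\INT_s)\lesssim \varepsilon^{-2}D_s$ up to the usual adjustments. Because $\log D_s=\sum_{j=1}^s\log(1+\gamma_j\kappa)\asymp\sum_{j=1}^s\gamma_j$ (using $\gamma_j\to 0$ where needed), each of the five hypotheses on $\sum_{j=1}^s\gamma_j$ translates immediately into the asserted bound on $n(\varepsilon,\INT_s)$. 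For SPT, PT and WT this is precisely \cite[Corollary~12.8]{NW10}; for $(\sigma,\tau)$-WT and UWT, which are not covered there, I would insert $\log n(\varepsilon,\INT_s)\lesssim \log\varepsilon^{-1}+\sum_{j=1}^s\gamma_j$ into the defining limits and check that $\frac{1}{s^\sigma}\sum_{j=1}^s\gamma_j\to 0$ forces the ratio to vanish.

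For the necessity direction the decomposability of $L$ at $0$ is the essential tool, and this is exactly where the argument improves on Theorem~\ref{thm:nec_tract_int}: there the lower bound of Proposition~\ref{pr:lowbd_int} was available only for non-negative quadrature weights, whereas decomposability yields a lower bound valid for \emph{all} linear algorithms. Following \cite[Proof of Theorem~5]{slowo01} and \cite{WW02}, I would use that $L(x,y)=0$ whenever $x<0<y$ to build fooling functions that are products of one-dimensional bumps, each localized in a half-line, so that functions attached to distinct sign patterns are mutually orthogonal and every quadrature node ``sees'' only one such pattern. Summing the contributions of the sign patterns missed by the $n$ nodes yields a lower bound $n(\varepsilon,\INT_s)\gtrsim(1-\varepsilon^2)\,D_s'$ with $D_s'=\prod_{j=1}^s(1+\gamma_j\kappa_0)$ and $\kappa_0>0$, and the same translation $\log D_s'\asymp\sum_{j=1}^s\gamma_j$ converts each failure of a weight condition into a failure of the corresponding tractability notion.

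The main obstacle I anticipate is the necessity part: isolating the decomposable part $L$ cleanly (the polynomial part $K_1$ is harmless but must be accounted for when estimating the diagonal), verifying that $\kappa_0$ is strictly positive and independent of $s$, and---since \cite[Corollary~12.8]{NW10} stops at WT---pushing the fooling-function lower bound through the weaker notions $(\sigma,\tau)$-WT and UWT by quantifying the growth of $D_s'$ relative to $s^\sigma$. The sufficiency side is comparatively routine once the estimate $\log D_s\asymp\sum_{j=1}^s\gamma_j$ is established.
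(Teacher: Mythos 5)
Your proposal is correct and follows essentially the same route as the paper: the paper proves this theorem precisely by invoking the integration results of Wasilkowski and Wo\'{z}niakowski for kernels with a decomposable part (as restated in \cite[Corollary~12.8]{NW10}), noting that the assumption \cite[Eq.~(17)]{WW02} is not needed for integration, that necessity comes from the decomposable component $L$ of $K_{\pitchfork,\alpha,\gamma}$, and extending the statement to $(\sigma,\tau)$-WT and UWT exactly as you outline. Your sketch merely unpacks in more detail the machinery the paper cites.
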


\begin{appendix}
\section{Appendix: The proof of Theorem~\ref{pr:intrep}}\label{appA}

For the Gaussian ANOVA space we know the Hermite expansion of the reproducing kernel, namely, in dimension 1 and for a generic weight $\gamma>0$, 
\begin{equation*}
K_{r_{\alpha,\gamma}}(x,y)=\sum_{k \in \NN_0} r_{\alpha,\gamma}(k) H_{k}(x) H_{k}(y).
\end{equation*}
Now we derive the integral representation presented in Theorem~\ref{pr:intrep}.

The starting point is the weighted Gaussian ANOVA norm from \eqref{fo_norm1a} given by 
\[
\|f\|_{r_{1,\alpha,\gamma}}^2=\left(\int_\RR f(y)\varphi(y)\rd y\right)^2+\frac{1}{\gamma}\sum_{k=1}^{\alpha-1}\left(\int_\RR f^{(k)}(y)\varphi(y)\rd y\right)^2+\frac{1}{\gamma}\int_\RR (f^{(\alpha)}(y))^2\varphi(y) \rd y
\]
on the space of functions 
\[
\cH_\alpha=\left\{f\colon \RR\to \RR\colon f^{(\alpha-1)} \text{ exists and is abs.~continuous}, \int_\RR |f^{(\alpha)}(y)|^2\varphi(y) \rd y<\infty\right\}.
\]
The space $\cH_\alpha$ decomposes into the orthogonal subspaces 
\begin{align*}
\cH_{1,\alpha}&:=\left\{f\colon \RR\to \RR\colon f^{(\alpha-1)} \text{ exists and is abs.~continuous}, \int_\RR |f^{(\alpha)}(y)|^2\varphi(y) \rd y=0\right\},\\
\cH_{2,\alpha}&:=\left\{f\colon \RR\to \RR\colon f^{(\alpha-1)} \text{ exists and is abs.~continuous}, \int_\RR |f^{(\alpha)}(y)|^2\varphi(y) \rd y<\infty,\right.\\ 
&\hspace{17em}\left. \int_\RR f^{(k)}(y)\varphi(y)\rd y=0, \, k\in\{0,\ldots,\alpha-1\}\right\},
\end{align*}
and therefore, using property (7) from \cite[Section~2]{aronszajn50}, the reproducing kernel $K_\alpha$ of the space $(\cH_\alpha,\|\cdot\|_{r_{1,\alpha,\gamma}})$ is of the form $K_\alpha=K_{1,\alpha}+K_{2,\alpha}$, where $K_{j,\alpha}$ is a reproducing kernel for $\cH_{j,\alpha}$, $j \in \{1,2\}$. Clearly, $\cH_{1,\alpha}$ consists precisely of the polynomials of degree smaller than $\alpha$ and therefore every $f_1\in \cH_{1,\alpha}$ can be written as 
\[
f_1(x)=\sum_{k=0}^{\alpha-1}  \widehat{f}_1(k)H_k(x), 
\]
such that, for $j \in \{0,1,\ldots,\alpha-1\}$, 
\[
\int_\RR f_1^{(j)}(y)\varphi(y) \rd y=\sum_{k=j}^{\alpha-1}  \sqrt{\frac{k!}{(k-j)!}}\, \widehat{f}_1(k)\int_\RR H_{k-j}(y)\varphi(y) \rd y
=\sqrt{j!}\, \widehat{f}_1(j)
\]
and therefore 
\begin{align*}
f_1(x)=&\sum_{j=0}^{\alpha-1}(j!)^{-1/2} \int_\RR f^{(j)}(y)\, \varphi(y) \rd y  \, H_j(x)\\
=&\int_\RR f(y) \varphi(y) \rd y \ \int_\RR K_{1,\alpha}(x,y) \varphi(y)\rd y\\
&+ \frac{1}{\gamma} \sum_{j=1}^{\alpha-1} \int_\RR f^{(j)}(y)\, \varphi(y) \rd y \ \int_\RR \left(\frac{\partial^j}{\partial y^j}
K_{1,\alpha}(x,y)\right) \varphi(y)\rd y
\end{align*}
with $K_{1,\alpha}(x,y)=1+\sum_{k=1}^{\alpha-1} \frac{\gamma}{k!}H_k(x)H_k(y)$. Here we used that the $j$-th derivative of $H_k$ equals $$H_k^{(j)}(y) =\left\{
\begin{array}{ll}
\sqrt{\frac{k!}{(k-j)!}}\, H_{k-j}(y) & \mbox{ if $k \ge j$,}\\[0.5em]
0 & \mbox{ otherwise,}
\end{array}\right.$$ from which we obtain that $$H_j(x)=\sqrt{j!} \int_\RR \left(\frac{\partial^j}{\partial y^j} K_{1,\alpha}(x,y)\right) \varphi(y)\rd y.$$

\medskip

We proceed to compute $K_{2,\alpha}$.
Here and in the following we write $\Phi(y):=\int_{-\infty}^y \varphi(\eta)\rd \eta$
and 
\[
\vartheta(x,y):=1_{(-\infty,x]}(y)\Phi(y)-1_{(x,\infty)}(y)\Phi(-y)\,.
\]
Recall that for $y\le -1$ we have 
\begin{align*}
0\le \Phi(y)&=\int_{-\infty}^{y} \varphi(\eta)\rd \eta\le \int_{-\infty}^{y} (-\eta)\varphi(\eta) \rd \eta
=\int_{-\infty}^{y} \varphi'(\eta)\rd \eta=\varphi(y)
\end{align*}
and that therefore also $0\le \Phi(-y)\le \varphi(-y)=\varphi(y)$ for $y>1$
so that $$\int_\RR 1_{(-\infty,x]}(y)\Phi(y)\rd y \quad \mbox{ and } \quad \int_\RR 1_{(x,\infty)}(y)\Phi(-y)\rd y$$ are real numbers.

\begin{lemma}\label{lem:primitive-function}
Let $h\colon \RR\to\RR$ be measurable with $\int_\RR (h(y))^2 \varphi(y) \rd y <\infty$. Then 
$g\colon \RR\to\RR$ with 
\[
g(x):=\int_\RR h(y) \vartheta(x,y)\rd y\quad \mbox{ for } x\in\RR,
\]
is the unique absolutely continuous function with $g'=h$ a.e. and
$\int_\RR g(y)\varphi(y)\rd y=0$. 
\end{lemma}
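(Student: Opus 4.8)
The plan is to make the piecewise structure of $\vartheta$ explicit and then handle existence, differentiability, the normalisation, and uniqueness in turn. Since for fixed $x$ one has $\vartheta(x,y)=\Phi(y)$ for $y\le x$ and $\vartheta(x,y)=-\Phi(-y)$ for $y>x$, I would first rewrite
\[
g(x)=\int_{-\infty}^x h(y)\Phi(y)\rd y-\int_x^\infty h(y)\Phi(-y)\rd y .
\]
To see that both integrals converge I would combine the tail bounds recorded just before the lemma, namely $\Phi(y)\le\varphi(y)$ for $y\le -1$ and $\Phi(-y)\le\varphi(y)$ for $y\ge 1$, with the Cauchy--Schwarz inequality: on the tails the integrand is dominated by $|h(y)|\varphi(y)$, which is integrable because $\int_\RR h^2\varphi<\infty$ and $\int_\RR\varphi<\infty$; on any bounded interval $\varphi$ is bounded below, so $|h|$ is integrable there as well. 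This shows that $g$ is well defined on all of $\RR$.

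For absolute continuity and the identity $g'=h$, I would recognise each of the two terms as an indefinite integral of an $L_1(\RR)$ function. By the Lebesgue differentiation theorem the first term has derivative $h(x)\Phi(x)$ and the second $-h(x)\Phi(-x)$ for almost every $x$, whence $g$ is absolutely continuous with
\[
g'(x)=h(x)\bigl(\Phi(x)+\Phi(-x)\bigr)=h(x)\qquad\text{a.e.},
\]
using $\Phi(x)+\Phi(-x)=\int_\RR\varphi=1$.

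For the normalisation $\int_\RR g(y)\varphi(y)\rd y=0$ I would substitute the two-term expression for $g$ and interchange the order of integration by Fubini's theorem; the inner integrals then produce $\int_y^\infty\varphi=\Phi(-y)$ and $\int_{-\infty}^y\varphi=\Phi(y)$ respectively, so both terms equal $\int_\RR h(y)\Phi(y)\Phi(-y)\rd y$ and cancel. The point that needs care --- and the main technical obstacle --- is justifying this interchange: I would verify absolute integrability by noting that $\Phi(y)\Phi(-y)\le\varphi(y)$ for $|y|\ge 1$ (again from the tail bounds) while $\Phi(y)\Phi(-y)\le\tfrac14$ on $[-1,1]$, so that $\int_\RR|h(y)|\Phi(y)\Phi(-y)\rd y<\infty$ by the same Cauchy--Schwarz argument as above.

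Finally, for uniqueness, if $g_1,g_2$ are both absolutely continuous with $g_j'=h$ a.e. and $\int_\RR g_j\varphi=0$, then $g_1-g_2$ is absolutely continuous with derivative $0$ almost everywhere, hence constant; the condition $\int_\RR(g_1-g_2)\varphi=0$ together with $\int_\RR\varphi=1$ forces that constant to be zero, so $g_1=g_2$.
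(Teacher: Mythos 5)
Your proposal is correct and follows essentially the same route as the paper's proof: the same two-term decomposition of $g$, the same tail bounds $\Phi(y)\le\varphi(y)$ combined with Cauchy--Schwarz for convergence, differentiation via $\Phi(x)+\Phi(-x)=1$, Fubini for the normalisation, and the standard uniqueness argument. The only (welcome) refinements are that you explicitly verify the absolute integrability needed for Fubini and phrase uniqueness via ``absolutely continuous with a.e.\ vanishing derivative implies constant'' rather than writing both functions as $\int_0^x h(y)\rd y$ plus a constant; neither changes the substance.
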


\begin{proof}
Since we may write 
\[
g(x):=\int_\RR h(y) \vartheta(x,y) \rd y=\int_{-\infty}^x h(y) \Phi(y) \rd y-\int_x^{\infty}h(y)\Phi(-y) \rd y,
\]
and the integrals exist since $$\int_{-\infty}^{-1} |h(y)| \Phi(y) \rd y
\le \int_{-\infty}^{-1} |h(y)| \varphi(y) \rd y\le \left(\int_\RR (h(y))^2 \varphi(y) \rd y\right)^{1/2}< \infty,$$
it is clear that $g$ is absolutely continuous. Differentiating gives a.e.
\[
g'(x)=h(x) \Phi(x)+h(x)\Phi(-x)=h(x) \Phi(x)+h(x)(1-\Phi(x))=h(x)\,.
\]

Next we integrate $g$ with respect to the weight $\varphi$ and use Fubini's theorem to get
\begin{align*}
\int_\RR g(x)\varphi(x) \rd x
&=\int_\RR \int_{-\infty}^x h(y) \Phi(y)\rd y\,\varphi(x)\rd x-\int_\RR \int_x^{\infty}h(y)\Phi(-y) \rd y\,\varphi(x)\rd x\\
&=\int_\RR \int_y^{\infty} h(y) \Phi(y) \varphi(x) \rd x \rd y-\int_\RR \int_{\infty}^yh(y)\Phi(-y) \varphi(x) \rd x \rd y\\
&=\int_\RR  h(y) \Phi(y) \Phi(-y) \rd y-\int_\RR h(y)\Phi(-y)\Phi(y) \rd y=0.
\end{align*}

Finally let $g_1$ be an arbitrary absolutely continuous function with $g_1'=h$ a.e.~and $\int_\RR g'(y)\varphi(y) \rd y=0$. Then $g_1(x)=\int_0^x h(y) \rd y+c_1$ a.e.
On the other hand, $g(x)=\int_0^x h(y) \rd y+c_2$, so $g_1(x)=g(x)+c_3$ a.e. 
But since $0=\int_\RR g_1(x)\varphi(x) \rd x=\int_\RR g(x)\varphi(x) \rd x + c_3=c_3$, we have $g_1=g$ a.e.
\end{proof}

Next we compute the reproducing Kernel $K_{2,\alpha}$ for $\alpha=1$. For every $f\in \cH_{2,1}$, i.e., with 
$$\int_\RR f(y)\varphi(y)\rd y=0 \quad \mbox{ and } \quad \int_\RR (f'(y))^2 \varphi(y) \rd y<\infty$$
we have from the reproducing property of the kernel $K_{2,1}$ that
\[
f(x)=\frac{1}{\gamma}\int_\RR f'(y) \frac{\partial}{\partial y}K_{2,1}(x,y) \varphi(y) \rd y.
\]
On the other hand we know from Lemma~\ref{lem:primitive-function} that 
\(
f(x)=\int_\RR f'(y) \vartheta(x,y) \rd y, 
\)
so 
\[
\int_\RR f'(y) \left(\frac{1}{\gamma} \frac{\partial}{\partial y}K_{2,1}(x,y) \varphi(y)-\vartheta(x,y)\right) \rd y=0.
\]
Since this holds in particular if $f'$ is the indicator function of an arbitrary measurable set, we conclude $$\frac{\partial}{\partial y}K_{2,1}(x,y)= \gamma \varphi(y)^{-1}\vartheta(x,y)$$ 
for a.e.~$y\in \RR$. 
Since $y\mapsto K_{2,1}(x,y)$ is an element of $\cH_{2,1}$, we need to have $\int_\RR K_{2,1}(x,y)\varphi(y) \rd y=0$. Again from Lemma~\ref{lem:primitive-function} we derive the integral representation
\[
K_{2,1}(x,y)=\int_\RR\frac{\partial}{\partial y}K_{2,1}(x,\eta)\vartheta(y,\eta) \rd\eta 
=\gamma \int_\RR\varphi(\eta)^{-1}\vartheta(x,\eta)\vartheta(y,\eta)\rd\eta\,, 
\]
Which finishes the proof of Theorem \ref{pr:intrep} for the case $\alpha=1$.
Note that 
\begin{align*}
\vartheta(x,\eta)\vartheta(y,\eta)
&=\left(1_{(-\infty,x]}(\eta)\Phi(\eta)-1_{(x,\infty)}(\eta)\Phi(-\eta)\right)
\left(1_{(-\infty,y]}(\eta)\Phi(\eta)-1_{(y,\infty)}(\eta)\Phi(-\eta)\right)\\
&=1_{(-\infty,\min(x,y)]}(\eta)(\Phi(\eta))^2+1_{(\max(x,y),\infty)}(\eta)(\Phi(-\eta))^2
\\&\quad 
-1_{(\min(x,y),\max(x,y)]}(\eta) \Phi(\eta)\Phi(-\eta).
\end{align*}
So $K_{2,1}$ can be written in terms of the primitive functions of $$\varphi^{-1}\Phi^2,\ \varphi^{-1}(1-\Phi)^2,\ \mbox{and}\ \varphi^{-1}\Phi(1-\Phi).$$ 

Next we compute $K_{2,\alpha}$ for $\alpha=2$. If $f\in \cH_{2,2}$, we obtain from the reproducing property of the kernel $K_{2,2}$ that
\[
f(x)=\frac{1}{\gamma} \int_\RR f''(y)\frac{\partial^2}{\partial y^2}K_{2,2}(x,y) \rd y
\]
so that \[
f'(x)=\frac{1}{\gamma} \int_\RR f''(y)\frac{\partial^3}{\partial x\partial y^2}K_{2,2}(x,y)\varphi(y)\rd y.
\] 
On the other hand, if $f\in \cH_{2,2}$ then $f'\in \cH_{2,1}$, and therefore
\[f'(x)=\frac{1}{\gamma} \int_\RR f''(y)\frac{\partial}{\partial y}K_{2,1}(x,y)\varphi(y) \rd y.\] Thus we have 
\[
\frac{1}{\gamma} \int_\RR f''(y)\left(\frac{\partial^3}{\partial x\partial y^2}K_{2,2}(x,y)-\frac{\partial}{\partial y}K_{2,1}(x,y)\right)\varphi(y) \rd y =0
\]
from which we obtain
$\frac{\partial^3}{\partial x\partial y^2}K_{2,2}=\frac{\partial}{\partial y}K_{2,1}$.
Now $$\frac{\partial^2}{\partial x\partial y}K_{2,2}(x,y)=K_{2,1}(x,y)+c_1(x)$$ and, since $K_{2,2}$ is symmetric and $\frac{\partial^2}{\partial x\partial y}K_{2,2}$ is continuous, $\frac{\partial^2}{\partial x\partial y}K_{2,2}=\frac{\partial^2}{\partial y\partial x}K_{2,2}$, so $\frac{\partial^2}{\partial x\partial y}K_{2,2}$ is also symmetric. From this it follows that 
$c_1$ is actually constant, $c_1(x)=c_1$. 

Since the function $y\mapsto K_{2,2}(x,y)$ is an element of $\cH_{2,2}$ for every $x\in \RR$, we get
\[
\int_\RR\frac{\partial}{\partial y}K_{2,2}(x,y)\varphi(y) \rd y=0 \quad \mbox{for every $x \in \RR$,}
\]
so that also 
\begin{eqnarray*}
0 & = & \frac{\partial}{\partial x}\int_\RR\frac{\partial}{\partial y}K_{2,2}(x,y)\varphi(y)\rd y \\
& = &  \int_\RR \frac{\partial^2}{\partial x\partial y} K_{2,2}(x,y)\varphi(y) \rd y \\
& = & \int_\RR (K_{2,1}(x,y)+c_1)\varphi(y) \rd y\\
& = & c_1.
\end{eqnarray*}
So, actually $c_1=0$, and hence $$\frac{\partial^2}{\partial x\partial y}K_{2,2}(x,y)=K_{2,1}(x,y).$$
Now, integrating with respect to $x$ another time, we get using Lemma~\ref{lem:primitive-function} once more that
\begin{equation}\label{eq:kern22}
\frac{\partial}{\partial y}K_{2,2}(x,y)=\int_\RR K_{2,1}(\xi,y)\vartheta(x,\xi) \rd \xi+c_2(y),
\end{equation}
so 
\begin{equation*}
\int_\RR \frac{\partial}{\partial y}K_{2,2}(x,y)\varphi(x)\rd x=\int_\RR K_{2,1}(\xi,y)\int_\RR\vartheta(x,\xi)\varphi(x)\rd x \rd\xi+c_2(y).
\end{equation*}
But
\begin{align*}
\int_\RR \vartheta(x,y)\varphi(x)\rd x
&=\int_\RR 1_{(-\infty,x]}(y)\varphi(x) \rd x \,\Phi(y)-\int_\RR 1_{(x,\infty)}(y)\varphi(x) \rd x\,\Phi(-y)\\
&=\Phi(-y)\Phi(y)-\Phi(y)\Phi(-y)=0
\end{align*}
and 
$$\int_\RR \frac{\partial}{\partial y}K_{2,2}(x,y)\varphi(x)\rd x=\frac{\partial}{\partial y} \int_\RR K_{2,2}(x,y)\varphi(x)\rd x=0$$
by symmetry of $K_{2,2}$ so $c_2(y)=0$.
Now, integrating \eqref {eq:kern22} (with $c_2(y)=0$) with respect to $y$ gives,
\[
K_{2,2}(x,y)=\int_\RR\int_\RR K_{2,1}(\xi,\eta)\vartheta(x,\xi)\vartheta(y,\eta) \rd \xi \rd \eta+c_3(x),
\]
and we see that
\[
0=\int_\RR K_{2,2}(x,y)\varphi(y) \rd y=\int_\RR\int_\RR K_{2,1}(\xi,\eta)\vartheta(x,\xi)\int_\RR\vartheta(y,\eta)\varphi(y) \rd y \rd\xi \rd\eta+c_3(x)=c_3(x).
\]
Thus we have found 
\begin{align*}
K_{2,2}(x,y) &= \int_\RR\int_\RR K_{2,1}(\xi,\eta)\vartheta(x,\xi)\vartheta(y,\eta) \rd\xi \rd\eta\\
&=\gamma \int_{\RR^3} \frac{1}{\varphi(s)}\vartheta(\xi,s)\vartheta(\eta,s)\vartheta(x,\xi)\vartheta(y,\eta) \rd s \rd \xi \rd \eta.
\end{align*}

By induction we get the integral representation of the general kernel 
\[
K_{2,\alpha}(x,y)=\gamma \int_{\RR^{2\alpha-1}} \frac{1}{\varphi(s)}\vartheta_\alpha(x,\xi_{\alpha-1},\ldots,\xi_1,s)\vartheta_\alpha(y,\eta_{\alpha-1},\ldots,\eta_1,s) \rd s \prod_{k=1}^{\alpha-1}(\rd \xi_k \rd\eta_k)
\]
with $
\vartheta_{n}(z_1,\ldots,z_{n+1}):=\prod_{k=1}^{n}\vartheta(z_{k},z_{k+1})
$
for $n\in \NN$. This finishes the proof of Theorem~\ref{pr:intrep}. \qed

\end{appendix}

\medskip

\noindent{\bf Acknowledgment.} We thank David Krieg for a valuable discussion, in particular for his help in proving the exponent of SPT in \eqref{cond_exp_spt_std}.

\end{document}